\theoremstyle{plain}
\newtheorem{theorem}{Theorem}[section]
\newtheorem{lemma}[theorem]{Lemma}
\newtheorem{proposition}[theorem]{Proposition}
\newtheorem{corollary}[theorem]{Corollary}
\numberwithin{equation}{section}
\theoremstyle{remark}
\newtheorem{example}[theorem]{Example}
\newtheorem{remark}[theorem]{Remark}
\theoremstyle{definition}
\newtheorem{definition}[theorem]{Definition}
\def \colim {\operatorname{colim}}
\def \Ker {\operatorname{Ker}}
\def \Coker {\operatorname{Coker}}
\def \SNilp {\operatorname{SNil}}
\newcommand{\B}{{\rm B}}
\newcommand{\Ab}{{\mathcal A}b}
\newcommand{\Cc}{\mathcal C}
\newcommand{\Z}{\mathbb Z}
\newcommand{\bF}{\mathbb F}
\newcommand{\LL}{\mathbb L}
\newcommand{\Q}{\mathbb Q}
\title[Relative Milnor $K$-groups and differential forms]{Relative Milnor $K$-groups and differential forms of split nilpotent extensions}
\author{Sergey Gorchinskiy}
\address{Steklov Mathematical Institute of Russian Academy of Sciences}
\address{National Research University Higher School of Economics, Russian Federation}
\email{gorchins@mi.ras.ru}
\author{Dimitrii Tyurin}
\address{National Research University Higher School of Economics, Russian Federation}
\email{dimtyurin@mail.ru}
\date{}
\begin{document}

\maketitle
\begin{abstract}
Let $R$ be a commutative ring and $I\subset R$ be a nilpotent ideal such that the quotient~$R/I$ splits out of $R$ as a ring. Let $N$ be a natural number such that~${I^N=0}$. We establish a canonical isomorphism between the relative Milnor $K$-group $K^{M}_{n+1}(R,I)$ and the quotient of the relative module of differential forms $\Omega^n_{R,I}/d\,\Omega^{n-1}_{R,I}$ assuming that~$N!$ is invertible in~$R$ and that the ring~$R$ is weakly $5$-fold stable. The latter means that any \mbox{$4$-tuple} of elements in~$R$ can be shifted by an invertible element to become a $4$-tuple of invertible elements.
\end{abstract}


\section{Introduction}

An important invariant of a commutative associative unital ring $R$ is its Milnor \mbox{$K$-group}~$K^{M}_{n}(R)$, where $n\geqslant 0$ is a natural number. Recall that this is the degree~$n$ component in the
quotient of the tensor ring~$(R^{*})^{\otimes\bullet}$ of the group $R^*$ of invertible elements in $R$ over the two-sided ideal generated by elements of type~$r\otimes(1-r)$, called Steinberg relations, where ${r,1-r\in R^*}$.
Milnor \mbox{$K$-groups} play a fundamental role in various domains of algebra and arithmetics, including class field theory. However, it is rather hard to compute Milnor $K$-groups in general as their definition involves a delicate interplay between the additive and multiplicative structures in the ring.

At the same time, one has another invariant of $R$, its module of (absolute) differential forms~$\Omega^n_R$ and it is relatively easy to calculate it explicitly. There is a canonical homomorphism of groups ${d\log\colon K^{M}_{n+1}(R)\to\Omega^{n+1}_R}$, where $n\geqslant 0$. Though the map~$d\log$ is very far from being an isomorphism in general, it becomes much closer to an isomorphism after one passes to a relative version.

\medskip

Namely, let $I\subset R$ be a nilpotent ideal such that the quotient map~${R\to R/I}$ has a section by a ring homomorphism, that is, let $(R,I)$ be a split nilpotent extension of the quotient~$R/I$ (see Definition~\ref{def:splitnilp}). The corresponding relative Milnor $K$-group~${K^{M}_{n}(R,I)}$ is defined as the kernel of the (surjective) homomorphism~${K^{M}_{n}(R)\to K^{M}_{n}(R/I)}$ (see Definition~\ref{def:relmil}), the module of relative differential forms~$\Omega^{n}_{R,I}$ is defined similarly (see Definition~\ref{def:relforms}), and one has the relative~$d\log$ map
$$
d\log:K^{M}_{n+1}(R,I)\longrightarrow\Omega^{n+1}_{R,I}\,,\qquad n\geqslant 0\,.
$$
Suppose also that $N!$ is invertible in $R$, where $N\geqslant 1$ is a natural number such that~${I^N=0}$. Then the relative $d\log$ map can be integrated canonically to a map
$$
{\rm B}\;:\;K^{M}_{n+1}(R,I)\longrightarrow\Omega^{n}_{R,I}/d\,\Omega^{n-1}_{R,I}
$$
(see Definition~\ref{def:Blochmap} and~Theorem~\ref{the:Bloch}). This was first observed by Bloch~\cite[\S\,1]{Bloch75} in a particular case and we call~$\B$ a Bloch map.

\medskip

Our main result (see Theorem~\ref{thm:main}) is that the Bloch map is an isomorphism if the ring $R$ is, in addition, weakly $5$-fold stable in the sense of Morrow~\cite[Def.\,3.1]{Morrow} (see Definition~\ref{def:weakst}). The latter condition means that for any 4-tuple~${r_1,\ldots,r_4\in R}$, there is an invertible element $r\in R^*$ such that the elements ${r_1+r,\ldots,r_4+r}$ are invertible in~$R$.

This statement can be considered as a version of the famous Goodwillie's theorem~\cite{Goodwillie} with algebraic $K$-groups replaced by Milnor $K$-groups.

\medskip

Let us compare our theorem with previously known results in this direction. Originally, van der Kallen~\cite{vdKallen71} proved an isomorphism
$$
K_2\big(S[\varepsilon],(\varepsilon)\big)\stackrel{\sim}\longrightarrow \Omega^1_S\,,
$$
where~$K_2\big(S[\varepsilon],(\varepsilon)\big)$ is the relative algebraic \mbox{$K$-group}, $\varepsilon$ is a formal variable that satisfies $\varepsilon^2=0$, and $S$ is a ring such that $2$ is invertible in it. Note that there is an isomorphism ${\Omega^1_S\simeq \Omega^1_{S[\varepsilon],(\varepsilon)}/d\,(\varepsilon)}$ (see Example~\ref{examp:Blochmap2}). The case of algebraic $K$-groups of arbitrary degree $n\geqslant 0$ was investigated then by Bloch~\cite{Bloch73}. Besides, Bloch~\cite[Theor.\,0.1]{Bloch75} proved an isomorphism
$$
K_2(R,I)\stackrel{\sim}\longrightarrow \Omega^1_{R,I}/d\,I
$$
assuming that $(R,I)$ is a split nilpotent extension of a local $\mathbb{Q}$-algebra $R/I$. This was generalized by Maazen and Stienstra~\cite[\S\,3.12]{MaazenStienstra} who found another proof and also replaced the condition in Bloch's theorem that $R/I$ is a local $\mathbb{Q}$-algebra with $R/I$ being any ring such that~$N!$ is invertible in it.

On the other hand, it follows from a result of van der Kallen~\cite[Cor.\,8.5]{vdKallen} that there is an isomorphism ${K^{M}_{2}(R,I)\simeq K_{2}(R,I)}$ when the ring~$R$ is $5$-fold stable. In particular, this holds when $R$ is a semi-local ring such that its residue fields are not isomorphic to~$\bF_2$, $\bF_3$, $\bF_4$, $\bF_5$. Note that being a $5$-fold stable ring is a stronger condition than being a weakly $5$-fold stable ring. For example, the ring of Laurent series over a ring is typically not $5$-fold stable, while it is always weakly $5$-fold stable, see~\cite[Rem.\,3.5]{Morrow} or~\cite[Rem.\,2.4(ii)]{GorchinskiyOsipov2015Miln}.

Combining all these results, one obtains that~${\rm B}$ is an isomorphism when $n=1$, the natural number $N!$ is invertible in $R$, and $R$ is $5$-fold stable. Finally, Dribus~\cite{Dribus} deduced from this statement the case of arbitrary degree~$n\geqslant 0$.

Note that this approach is based on difficult inexplicit theorems from algebraic \mbox{$K$-theory}, while both sides of the isomorphism are given by explicit generators and relations. Thus it is natural to find a more direct proof which would be basically reduced just to Steinberg relations. We give such a proof in our paper. Moreover, unlike the above approach, our result is valid for weakly $5$-fold stable rings, not only for $5$-fold stable rings.

Previously, Gorchinskiy and \mbox{Osipov}~\cite[Theor.\,2.9]{GorchinskiyOsipov2015Miln} proved that $\rm B$ is an isomorphism in the case $R=S[\varepsilon]$, $I=(\varepsilon)$, where $\varepsilon^2=0$ as above and $S$ is a weakly $5$-fold stable ring such that $2$ is invertible in it (see also Example~\ref{examp:Blochmap2}). They applied this result to the study of the higher-dimensional Contou-Carr\`ere symbol in the series of papers~\cite{GorchinskiyOsipov15UMN},~\cite{GorchinskiyOsipov15MS},~\cite{GorchinskiyOsipov16Tr},~\cite{GorchinskiyOsipov16FFA}. The approach in~\cite{GorchinskiyOsipov2015Miln} is based on an explicit analysis of elements in Milnor $K$-groups. We reduce our theorem to this case using that relative Milnor $K$-groups and modules of differential forms commute with a certain class of not filtered colimits and also applying several new tricks to deal with elements in Milnor $K$-groups.

\medskip

The paper is organized as follows. In Section~\ref{sec:prelim}, we introduce our main objects of study and state the main results. Subsection~\ref{subsec:Milnorbasic} and Subsection~\ref{subsec:basicdiffforms} consist of recollections on Milnor $K$-groups and modules of differential forms, respectively, including their relative versions $K_n^M(R,I)$ and $\Omega^n_{R,I}$ for a (nilpotent) ideal $I\subset R$. In particular, we give an explicit description of generators for the relative groups (see Lemma~\ref{lem:generMilnorK} and Lemma~\ref{lem:generKahler}), which is used frequently in what follows. In Subsection~\ref{subsec:Blochmap}, we define the Bloch map ${\B\colon K_{n+1}^M(R,I)\to\Omega^n_{R,I}/d\,\Omega^{n-1}_{R,I}}$ (see Definition~\ref{def:Blochmap}) and mention some of its general properties (see Lemma~\ref{lem:surjBlochmap} and Remark~\ref{rmk:Blochmap}). Subsection~\ref{subsec:mainresults} contains the formulations of our main results on the Bloch map, namely, its existence (see Theorem~\ref{the:Bloch}) and the property of being an isomorphism (see Theorem~\ref{thm:main}). We have also included a short discussion of possible generalizations to a non-split case (see Remark~\ref{rem:nonsplit}).

Section~\ref{sec:Bloch} is devoted to a proof of the existence of the Bloch map. First, in Subsection~\ref{subsec:splitnilp}, we introduce the category $\SNilp_N(S)$ of split nilpotent extensions of a given ring $S$ of nilpotency degree $N$ (see Definition~\ref{def:splitnilp}). Also, we define finite free objects $(R_{N,m},I_{N,m})$ in this category (see Definition~\ref{def:elem}), which are the quotients~${R_{N,m}=S[t_1,\ldots,t_m]/(t_1,\ldots,t_m)^N}$ with~${I_{N,m}=(t_1,\ldots,t_m)/(t_1,\ldots,t_m)^N}$. We show in Subsection~\ref{subsec:reldR} the vanishing of relative de Rham cohomology for such objects (see Proposition~\ref{prop:trivdR}). The vanishing does not hold for an arbitrary split nilpotent extension as we discuss at the end of Subsection~\ref{subsec:reldR}, where we also note a connection of this problem to the Milnor and Tyurina numbers. In Subsection~\ref{subsec:ffa}, we define finitely freely approximable functors from $\SNilp_N(S)$ to the category of abelian groups (see Definition~\ref{def:frapp}). This notion is useful for us because a morphism between such functors is an isomorphisms if and only if it induces isomorphisms between their values on finite free split nilpotent extensions (see Lemma~\ref{lem:isomapprox}). In addition, we give sufficient conditions for a functor to be finitely freely approximable (see Proposition~\ref{prop:elappr1}). This is applied in Subsection~\ref{subsec:constrBloch} to show that the functors defined by~${K_n^M(R,I)}$,~${\Omega^n_{R,I}}$, and~${\Omega^n_{R,I}/d\,\Omega^{n-1}_{R,I}}$ are finitely freely approximable (see Lemma~\ref{prop:elaprKOmgega} and Corollary~\ref{cor:nonapprox}). In turn, this is used to construct the Bloch map. We also outline an explicit proof of the existence of the Bloch map at the end of Subsection~\ref{subsec:constrBloch}.

In Section~\ref{sect:powerseries}, we consider the second Milnor $K$-group $K_2^M\big(S[[t]]\big)$ of the ring of formal power series $S[[t]]$ in a formal variable $t$. In Subsection~\ref{subsec:filtrMilnor}, we define a decreasing filtration~$V_p$,~${p\geqslant 0}$, on this group, which is induced by a standard decreasing filtration on the group~$S[[t]]^*$ by subgroups $1+t^pS[[t]]$. The main result of this section claims that if $2p$ is invertible in $S$ and~$S$ is weakly $5$-fold stable, then $V_p$ coincides with its subgroup $K_2^M\big(S[[t]],(t^p)\big)$ (see Proposition~\ref{prop:vanish} and Corollary~\ref{cor:vanish}). The proof of this fact uses an auxiliary ring $S'=S[[x]]$ together with a collection of homomorphisms of algebras over $S[[t]]$ from $S'[[t]]$ to $S[[t]]$ that send $x$ to $t^q$,~${q\geqslant 1}$. They allows us to make induction with respect to indices in the filtration~$V_p$ (see Lemma~\ref{lem:lift}). We also use certain maps from~$\Omega^1_S$ to $K_2^M\big(S[[t]]\big)/V_{p+1}$ constructed in Subsection~\ref{subsec:constrsymbforms} with the help of the main result of~\cite{GorchinskiyOsipov2015Miln}. Subsection~\ref{subsec:key} contains a technical result, which is a key for the whole proof (see Proposition~\ref{prop:key}). Using this result, we prove in Subsection~\ref{subsec:proofvanish} the needed equality ${V_p=K_2^M\big(S[[t]],(t^p)\big)}$.

Section~\ref{sec:proofmain} contains the proof of the main result that the Bloch map is an isomorphism. The proof consists in a series of reductions based on an auxiliary result (see Lemma~\ref{lem:embed}) given in Subsection~\ref{subsec:redlemma}. Using the above equality ${V_p=K_2^M\big(S[[t]],(t^p)\big)}$, we establish in Subsection~\ref{subsec:partialmain} that the Bloch map is an isomorphism for $(\bar t^{\,N-1})\subset R_{N}$, where $R_{N}=S[t]/(t^N)$ and $\bar t$ denotes the image of $t$ in $R_N$ (note that this is a non-split case). Then we deduce from this a special case of the main theorem for $(\bar t\,)\subset R_N$ (see Proposition~\ref{prop:m=1}). We prove the main result in full generality in Subsection~\ref{subsec:proofmain} reducing it to the case $I_{N,m}\subset R_{N,m}$ with the help of finitely freely approximable functors and then further to the case $(\bar t\,)\subset R_N$. Finally, in Subsection~\ref{subsec:nonexistBloch}, we provide a series of examples of a nilpotent ideal $I$ in a ring $R$ such that the quotient $R/I$ does not split out of $R$ and the Bloch map does not exist (see Proposition~\ref{prop:nonBloch}).

\medskip

The authors are grateful to Denis Osipov and Vadim Vologodsky for useful discussions on the subject of the paper. The authors are partially supported by Laboratory of Mirror Symmetry NRU HSE, RF Government grant, ag.~\textnumero~14.641.31.0001

\section{Preliminaries and statements of results}\label{sec:prelim}

Throughout the paper, by a ring we mean a commutative associative unital ring. Let~$R$ be such a ring. If we need auxiliary assumptions on $R$, we say this explicitly in what follows. Let $n\geqslant 0$ be a natural number.

\subsection{Milnor $K$-groups}\label{subsec:Milnorbasic}

By $R^*$ denote the multiplicative group of invertible elements in $R$. The {\it $n$-th Milnor $K$-group} of $R$ is defined as the quotient
$$
K^M_n(R):=(R^*)^{\otimes n}/{\rm St}_n(R)\,.
$$
Here, ${\rm St}_{n}(R)$ is the subgroup of $(R^*)^{\otimes n}$ generated by so-called {\it Steinberg relations}, which are elements of type
$$
r_{1}\otimes\ldots\otimes r_{i}\otimes r\otimes(1-r)\otimes r_{i+1}\otimes\ldots\otimes r_{n-2}\,,
$$
where $0\leqslant i\leqslant n-2$ and $r_1,\ldots,r_{n-2},r,1-r\in R^*$.

For example, $K_0^M(R)=\Z$ and $K^{M}_{1}(R)=R^{*}$. For $n\geqslant 2$, the class in~${K_n^M(R)}$ of a tensor~${r_{1}\otimes\ldots\otimes r_{n}}$ is denoted by~$\{r_{1},\dots,r_{n}\}$. Elements of Milnor \mbox{$K$-groups} are often called {\it symbols}. The group law in $K_n^M(R)$ is written additively except for the case~${n=1}$.


The assignment of $K_n^M(R)$ to $R$ is functorial with respect to the ring $R$. Given a homomorphism of rings, for simplicity, we denote the corresponding map between their Milnor $K$-groups similarly as the ring homomorphism.

\medskip

Let $I\subset R$ be an ideal such that all elements in $1+I$ are invertible in $R$. Equivalently, an element in $R$ is invertible if and only if its image in~$R/I$ is invertible. An example is when all elements in $I$ are nilpotent or topologically nilpotent. For instance, this holds for $R$ being the ring $S[[t]]$ of formal power series in a formal variable $t$ with coefficients in a ring $S$ and $I=(t)$.

The natural homomorphism $R^*\to (R/I)^*$ is surjective and it follows that the homomorphism $K_n^M(R)\to K_n^M(R/I)$ is surjective as well.

\begin{definition}\label{def:relmil}
The {\it relative $n$-th Milnor \mbox{$K$-group}} is given by the formula
$$
K^{M}_{n}(R,I):={\rm Ker}\big(K_n^M(R)\to K_n^M(R/I)\big)\,.
$$
\end{definition}

In particular, there is an equality $K_1^M(R,I)=1+I$ between subgroups of~${K_1^M(R)=R^*}$.

The following simple lemma is needed for the sequel.

\begin{lemma}\label{lem:generMilnorK}
The relative Milnor $K$-group $K^{M}_{n}(R,I)$ is generated by elements of type $\{r_{1},\ldots,r_{i},1+x,r_{i+1},\ldots,r_{n-1}\}$, where $0\leqslant i\leqslant n-1$, $r_1,\ldots,r_{n-1}\in R^*$, and~$x\in I$.
\end{lemma}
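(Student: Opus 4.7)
My plan is as follows. Let $H \subseteq K_n^M(R)$ denote the subgroup generated by the symbols described in the statement. Every such generator obviously maps to $0$ in $K_n^M(R/I)$ (its $1+x$-slot becomes $1$), so $H \subseteq K_n^M(R,I)$; the real content is the opposite inclusion. I would obtain it by constructing an inverse to the induced surjection $\bar{p}\colon K_n^M(R)/H \twoheadrightarrow K_n^M(R/I)$.

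Since $1+I \subseteq R^*$ by hypothesis, the projection $R^* \twoheadrightarrow (R/I)^*$ is surjective, so I fix a set-theoretic (not necessarily multiplicative) section $s\colon (R/I)^* \to R^*$. I then define a map of sets
$$
\varphi\colon (R/I)^{*\otimes n} \longrightarrow K_n^M(R)/H,\qquad \bar r_1\otimes\cdots\otimes\bar r_n \longmapsto \{s(\bar r_1),\ldots,s(\bar r_n)\}\bmod H.
$$
The guiding principle is that whenever two elements of $R^*$ reduce to the same class in $(R/I)^*$, their ratio lies in $1+I$; consequently, swapping one for the other alters any Milnor symbol by an element of $H$. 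Applied to $s(\bar r\bar r{\,}')$ versus $s(\bar r)\, s(\bar r{\,}')$, this principle yields multilinearity of $\varphi$. For the Steinberg relations, the hypothesis $1+I \subseteq R^*$ guarantees that $1-s(\bar r)\in R^*$ whenever $1-\bar r\in (R/I)^*$, so $\{\ldots,s(\bar r),1-s(\bar r),\ldots\}$ vanishes in $K_n^M(R)$ by the usual Steinberg relation; comparing $1-s(\bar r)$ with $s(1-\bar r)$ by the same principle then shows that $\varphi$ kills the lifted Steinberg generator modulo $H$. Therefore $\varphi$ factors through a homomorphism $K_n^M(R/I)\to K_n^M(R)/H$.

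It remains to verify that $\varphi$ is inverse to $\bar p$. One direction is tautological on generators. For the other, any symbol $\{r_1,\ldots,r_n\}\in K_n^M(R)$ can be written as $\{s(\bar r_1)u_1,\ldots,s(\bar r_n)u_n\}$ with each $u_i \in 1+I$; multilinear expansion produces $\{s(\bar r_1),\ldots,s(\bar r_n)\}$ plus further symbols, each of which contains some factor $u_i\in 1+I$ and thus lies in $H$. Hence $\bar p$ is an isomorphism, giving $K_n^M(R,I)=H$. The main subtlety I expect is the well-definedness of $\varphi$ on the Steinberg relations: this is where the section $s$'s failure to be multiplicative really bites, and it is the unique place where the hypothesis $1+I\subseteq R^*$ is essential, via the invertibility of $1-s(\bar r)$; everything else is routine symbol manipulation.
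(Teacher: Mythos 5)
Your proof is correct, but it takes a genuinely different route from the paper's. You fix a set-theoretic section $s$ of $R^*\twoheadrightarrow (R/I)^*$ and construct an explicit inverse $K_n^M(R/I)\to K_n^M(R)/H$, checking multilinearity and the Steinberg relations modulo $H$ via the observation that two units of $R$ with the same reduction differ by a factor in $1+I$; the invertibility of $1-s(\bar r)$, guaranteed by $1+I\subset R^*$, is indeed the one essential use of the hypothesis. The paper instead works one level up, with the presentation $K_n^M=(R^*)^{\otimes n}/{\rm St}_n$: it shows that ${\rm St}_n(R)\to{\rm St}_n(R/I)$ is surjective (lifting Steinberg relators, again using $1+I\subset R^*$), applies the snake lemma to conclude that $\Ker\big((R^*)^{\otimes n}\to((R/I)^*)^{\otimes n}\big)$ surjects onto $K_n^M(R,I)$, and identifies that kernel by right-exactness of the tensor product applied to $1\to 1+I\to R^*\to (R/I)^*\to 1$. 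The paper's route is shorter and needs no well-definedness checks; yours is more hands-on, in effect producing a splitting of $K_n^M(R)/H\to K_n^M(R/I)$ and reproving the surjectivity of $K_n^M(R)\to K_n^M(R/I)$ along the way. One small point of bookkeeping: $\varphi$ should be introduced as a multilinear map on the cartesian power $\big((R/I)^*\big)^{\times n}$ and only then induced on the tensor power; as literally written, a ``map of sets'' on $\big((R/I)^*\big)^{\otimes n}$ defined on elementary tensors is not yet well posed, but this is exactly what your multilinearity check supplies, so nothing is missing in substance.
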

\begin{proof}
By definition of Milnor $K$-groups, we have the following commutative diagram with exact raws and with vertical maps $\alpha$, $\beta$, and $\gamma$ being induced by the quotient map~${R\to R/I}$:
$$
\begin{CD}
0@>>>{\rm St}_{n}(R)@>>> (R^{*})^{\otimes n}@>>> K^{M}_{n}(R)@>>> 0 \\
@. @VV{\alpha}V @VV{\beta}V @VV{\gamma}V \\
0@>>>{\rm St}_n(R/I)@>>>\big((R/I)^{*}\big)^{\otimes n}@>>> K^{M}_{n}(R/I)@>>> 0
\end{CD}
$$
Take an element
$$
a_{1}\otimes\ldots\otimes a_{i}\otimes a\otimes(1-a)\otimes a_{i+1}\otimes\ldots\otimes a_{n-2}\in {\rm St}_n(R/I)\,,
$$
where $0\leqslant i\leqslant n-2$ and $a_1,\ldots,a_{n-2},a,1-a\in (R/I)^*$. Let $r_1,\ldots, r_{n-2},r\in R$ be any preimages of $a_1,\ldots,a_{n-2},a\in R/I$, respectively. Then the elements $r_1,\ldots, r_{n-2},r,1-r$ are invertible in $R$ and we have the equality
$$
\alpha(r_{1}\otimes\ldots\otimes r_{i}\otimes r\otimes(1-r)\otimes r_{i+1}\otimes\ldots\otimes r_{n-2})=
$$
$$
=a_{1}\otimes\ldots\otimes a_{i}\otimes a\otimes(1-a)\otimes a_{i+1}\otimes\ldots\otimes a_{n-2}\,.
$$
Thus the map $\alpha$ is surjective. Therefore, by the snake lemma, the natural map
$$
{\rm Ker}(\beta)\longrightarrow {\rm Ker}(\gamma)=K_n^M(R,I)
$$ is surjective as well.

On the other hand, we have an exact sequence
$$
1\longrightarrow (1+I)\longrightarrow R^*\longrightarrow (R/I)^*\longrightarrow 1\,.
$$
Since tensor product is right exact, we obtain a right exact sequence
$$
\mbox{$\bigoplus\limits_{i=0}^{n-1}$}\,(R^*)^{\otimes i}\otimes (1+I)\otimes (R^*)^{(n-i-1)}\longrightarrow(R^{*})^{\otimes n}\stackrel{\beta}\longrightarrow \big((R/I)^{*}\big)^{\otimes n}\longrightarrow 1\,.
$$
This gives an explicit description of $\Ker(\beta)$, which finishes the proof.
\end{proof}

It follows directly from Definition~\ref{def:relmil} that for any ideal~$J\subset R$ contained in~$I$, there is an exact sequence
\begin{equation}\label{eq:exactMilnor}
0\longrightarrow K_n^M(R,J)\longrightarrow K_n^M(R,I)\longrightarrow K_n^M(R',I')\longrightarrow 0\,,
\end{equation}
where we put $R'=R/J$, $I'=I/J$.

\subsection{Differential forms}\label{subsec:basicdiffforms}

By $\Omega^{1}_{R}$ denote the $R$-module of (absolute) differential forms of $R$. Recall that the~\mbox{$R$-module} $\Omega_R^1$ is generated by elements~$dr$, $r\in R$, subject to linearity $d(r+s)=dr+ds$ and the Leibniz rule ${d(rs)=rds+sdr}$, where $r,s\in R$.

The $R$-module $\Omega^n_R$ of {\it differential forms of degree~$n$} is defined as the wedge power
$$
\Omega^n_R:=\mbox{$\bigwedge^n_R\Omega^1_R$}\,.
$$
By definition, $\Omega^0_R=R$. Explicitly, $\Omega^n_R$ is the quotient of the $R$-module $(\Omega^1_R)_R^{\otimes n}$ over the \mbox{$R$-submodule} generated by elements of type
$$
dr_{1}\otimes\ldots\otimes dr_{i}\otimes dr\otimes dr\otimes dr_{i+1}\otimes\ldots\otimes dr_{n-2}\,,
$$
where $0\leqslant i\leqslant n-2$ and $r_1,\ldots,r_{n-2},r\in R$.


We have a group homomorphism
$$
d\;:\;R\longrightarrow \Omega^{1}_{R}\,,\qquad r\longmapsto dr\,,
$$
which defines also a group homomorphism
$$
d\;:\;\Omega^{n}_{R}\longrightarrow \Omega^{n+1}_{R}\,,\qquad sdr_{1}\wedge\ldots\wedge dr_{n}\longmapsto ds\wedge dr_{1}\wedge\ldots\wedge dr_{n}\,,
$$
called {\it de Rham differential}. Since $d^2=0$, we have a complex
$$
R\stackrel{d}\longrightarrow\Omega^{1}_{R}\stackrel{d}\longrightarrow\ldots\stackrel{d}\longrightarrow\Omega^{i}_{R}\stackrel{d}\longrightarrow\ldots\,,
$$
called {\it de Rham complex} of $R$. Its cohomology groups are called {\it de Rham cohomology} of~$R$ and are denoted by $H_{dR}^i(R)$, $i\geqslant 0$.

By $(\Omega_R^n)^{cl}$ denote the group of closed differential forms, that is, put
$$
(\Omega_R^n)^{cl}:={\rm Ker}\big(d\colon \Omega^n_R\to\Omega^{n+1}_R\big)\,.
$$

The assignment of $\Omega^n_R$ to $R$ is functorial with respect to the ring $R$. As in the case of Milnor $K$-groups, given a homomorphism of rings, we denote the corresponding maps between their modules of differential forms and de Rham cohomology similarly as the ring homomorphism.

\medskip

Let $I\subset R$ be an ideal. The natural morphism of $R$-modules ${\Omega^{n}_{R}\to\Omega^{n}_{R/I}}$ is surjective.

\begin{definition}\label{def:relforms}
The {\it relative $R$-module of differential forms of degree $n$} is given by the formula
$$
\Omega^{n}_{R,I}:={\rm Ker}\big(\Omega^{n}_{R}\to\Omega^{n}_{R/I}\big)\,.
$$
\end{definition}

Note that we use this term by analogy with relative Milnor $K$-groups (see Definition~\ref{def:relmil}). The reader is warned that usually the term ``relative differential forms'' has another meaning in the context of a homomorphism between rings.

In particular, there is an equality $\Omega^0_{R,I}=I$ between $R$-submodules of~${\Omega^0_R=R}$.

Relative modules of differential forms give a subcomplex $\Omega^{\bullet}_{R,I}$ of $\Omega^{\bullet}_R$, called a {\it relative de Rham complex}. Its cohomology groups are denoted by $H^i_{dR}(R,I)$, $i\geqslant 0$, and are called {\it relative de Rham cohomology}.

The following simple lemma is needed for the sequel.

\begin{lemma}\label{lem:generKahler}
The relative module of differential forms $\Omega^{n}_{R,I}$ is generated additively as an abelian group by differential forms of type ${x\,dr_{1}\wedge\ldots\wedge dr_{n}}$ and by differential forms of type ${r_1\,dx\wedge dr_2\wedge\ldots\wedge dr_{n}}$, where ${r_1,\ldots,r_{n}\in R}$ and~${x\in I}$.
\end{lemma}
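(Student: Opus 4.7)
My plan mirrors the proof of Lemma \ref{lem:generMilnorK}, but with the tensor/Steinberg presentation of Milnor $K$-groups replaced by the tensor/alternation presentation of the wedge power.

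I would first treat the case $n = 1$ by hand. Let $K \subseteq \Omega^1_R$ denote the abelian subgroup generated by $x\,dr$ and $r\,dx$ with $x \in I$ and $r \in R$. The inclusion $K \subseteq \Omega^1_{R,I}$ is obvious, so only the reverse inclusion requires work. I would check it by verifying that the quotient $\Omega^1_R/K$ satisfies the universal property of $\Omega^1_{R/I}$: it is naturally an $R/I$-module because $I\cdot\Omega^1_R \subseteq K$ (every form is a finite sum of $s\,dr$); it carries a derivation $R/I \to \Omega^1_R/K$ induced by $d\colon R \to \Omega^1_R$ (well defined modulo $K$ since $d(r+x) - dr = dx \in K$); and this derivation is universal essentially by construction.

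For $n \geq 2$, I would apply the snake lemma to the commutative diagram with exact rows
\[
\begin{CD}
0 @>>> A_n(R) @>>> (\Omega^1_R)^{\otimes_R n} @>>> \Omega^n_R @>>> 0 \\
@. @VV{\alpha}V @VV{\beta}V @VV{\gamma}V \\
0 @>>> A_n(R/I) @>>> (\Omega^1_{R/I})^{\otimes_{R/I} n} @>>> \Omega^n_{R/I} @>>> 0
\end{CD}
\]
where $A_n$ is the submodule generated by elementary alternating relations $\omega_1 \otimes \cdots \otimes \omega_i \otimes \theta \otimes \theta \otimes \omega_{i+1} \otimes \cdots \otimes \omega_{n-2}$ with $\omega_j, \theta \in \Omega^1$. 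Surjectivity of $\alpha$ is immediate from the surjectivity of $\Omega^1_R \to \Omega^1_{R/I}$: lift each $\bar\omega_j$ and $\bar\theta$ to $\Omega^1_R$ independently. The snake lemma then gives that $\Omega^n_{R,I} = \ker(\gamma)$ is the image of $\ker(\beta)$ under the wedge map.

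It remains to describe $\ker(\beta)$. Iterating the short exact sequence $0 \to K \to \Omega^1_R \to \Omega^1_{R/I} \to 0$ from the $n=1$ case in each tensor slot and using right-exactness of $\otimes_R$, I conclude that $\ker(\beta)$ is generated as an abelian group by elementary tensors $\omega_1 \otimes \cdots \otimes \omega_n \in (\Omega^1_R)^{\otimes_R n}$ in which at least one factor lies in $K$. Pushing forward to $\Omega^n_R$, using the explicit $n=1$ description of $K$ together with antisymmetry of the wedge to move the distinguished factor into the first position, and expanding the remaining factors as sums of $s\,dr$, I obtain generators of exactly the two forms asserted in the lemma. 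The only genuinely non-formal step is the $n = 1$ case; everything after that is a routine diagram chase.
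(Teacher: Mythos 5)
Your argument is correct, but it takes a more hands-on route than the paper. The paper's proof is short: it quotes the first fundamental (conormal) exact sequence $I\to \Omega^1_R/(I\cdot\Omega^1_R)\to\Omega^1_{R/I}\to 0$ from Matsumura, concludes $\Omega^1_{R/I}\simeq \Omega^1_R/(I\cdot\Omega^1_R+dI)$, and then passes to degree $n$ in one line by the standard identity $\Omega^n_{R/I}\simeq\Omega^n_R/\big((I\cdot\Omega^1_R+dI)\wedge\Omega^{n-1}_R\big)$ for wedge powers of a quotient module. You instead (a) reprove the degree-one statement from scratch by checking that $\Omega^1_R/K$ has the universal property of $\Omega^1_{R/I}$ (your $K$ is exactly $I\cdot\Omega^1_R+R\,dI$, so this is a self-contained proof of the cited fact), and (b) replace the wedge-power-of-a-quotient identity by an explicit snake-lemma argument on the tensor presentations, with $\Ker(\beta)$ identified via right-exactness of $\otimes_R$ slot by slot — deliberately mirroring the paper's proof of Lemma~\ref{lem:generMilnorK}. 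Both steps are sound: $\alpha$ is surjective because elementary alternating relations lift factorwise (scalars can be absorbed into a factor), the identification $(\Omega^1_{R/I})^{\otimes_{R/I}n}\simeq(\Omega^1_{R/I})^{\otimes_R n}$ makes the right-exactness argument legitimate, and pushing an elementary tensor with one factor in $K$ into $\Omega^n_R$ and using antisymmetry gives exactly the two asserted types of generators. What your version buys is self-containedness and a uniform template shared with the Milnor $K$-theory lemma; what the paper's version buys is brevity, since the exterior-power compatibility does in one step what your diagram chase does by hand. The only place you wave your hands — "universal essentially by construction" — is genuinely routine (a derivation $R/I\to M$ pulls back to $R$, factors $R$-linearly through $\Omega^1_R$, and kills $K$ because $I$ annihilates $M$ and $D(\bar x)=0$ for $x\in I$), so no gap.
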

\begin{proof}
By~\cite[Theor.\,25.2]{Matsumura}, there is an exact sequence
$$
I\longrightarrow \Omega^1_{R}/(I\cdot \Omega^1_{R})\longrightarrow \Omega^1_{R/I}\longrightarrow 0\,,
$$
where the first map sends an element $x\in I$ to the class of $dx$ in the quotient. In other words, there is an isomorphism of $R$-modules
$$
\Omega^{1}_{R/I}\simeq \Omega^{1}_{R}/(I\cdot\Omega^{1}_{R}+dI)\,.
$$
Taking the wedge power, we obtain isomorphisms
$$
\Omega^{n}_{R/I}\simeq \Omega^{n}_{R}/\big((I\cdot\Omega^{1}_{R}+dI)\wedge\Omega^{n-1}_R\big)\simeq \Omega^{n}_{R}/(I\cdot\Omega^{n}_{R}+dI\wedge\Omega^{n-1}_R)\,,
$$
which proves the lemma.
\end{proof}

Similarly to Milnor $K$-groups, given an ideal~$J\subset R$ contained in~$I$, there is an exact sequence of relative de Rham complexes
$$
0\longrightarrow \Omega^{\bullet}_{R,J}\longrightarrow \Omega^{\bullet}_{R,I}\longrightarrow \Omega^{\bullet}_{R',I'}\longrightarrow 0\,,
$$
where, as above, we put $R'=R/J$, $I'=I/J$. This gives a long exact sequence of relative de Rham cohomology
\begin{equation}\label{eq:longseqcohom}
\ldots \longrightarrow H^{n-1}_{dR}(R',I')\longrightarrow H^{n}_{dR}(R,J)\longrightarrow H^{n}_{dR}(R,I)\longrightarrow H^n_{dR}(R',I')\longrightarrow\ldots\,,
\end{equation}
Besides, truncating the relative de Rham complexes in degrees greater than $n$ and taking the corresponding long exact sequence of cohomology, we obtain an exact sequence
\begin{equation}\label{eq:longseqforms}
\ldots \longrightarrow H^{n-1}_{dR}(R',I')\longrightarrow \Omega^{n}_{R,J}/d\,\Omega^{n-1}_{R,J}\longrightarrow \Omega^{n}_{R,I}/d\,\Omega^{n-1}_{R,I}\longrightarrow \Omega^{n}_{R',I'}/d\,\Omega^{n-1}_{R',I'}\longrightarrow 0\,.
\end{equation}

\subsection{Bloch map}\label{subsec:Blochmap}

Recall that $n\geqslant 0$ is a natural number. It is easy to check that there is a homomorphism of groups
$$
d\log\;:\;K^{M}_{n+1}(R)\longrightarrow\Omega^{n+1}_R\,,\qquad \{r_{1},\ldots,r_{n+1}\}\longmapsto \frac{dr_{1}}{r_{1}}\wedge\ldots\wedge\frac{dr_{n+1}}{r_{n+1}}\,,
$$
which is functorial with respect to $R$. The image of $d\log$ is contained in the subgroup~$(\Omega^{n+1}_R)^{cl}\subset \Omega^{n+1}_R$.

Let $I\subset R$ be a nilpotent ideal. Since $d\log$ is functorial, we have a homomorphism between relative groups
$$
d\log\;:\;K^{M}_{n+1}(R,I)\longrightarrow(\Omega^{n+1}_{R,I})^{cl}\,.
$$
Let $N\geqslant 1$ be a natural number such that $I^N=0$. Until the end of this subsection, we assume that $(N-1)!$ is invertible in $R$.

For an element $x\in I$, put
$$
\log(1+x):=x-\frac{x^2}{2}+\ldots+(-1)^{N}\frac{x^{N-1}}{N-1}\in I\,.
$$
Note that there are equalities
$$
d\big(\log(1+x)\big)=\frac{d(1+x)}{1+x}=(d\log)(1+x)\,.
$$

\begin{lemma}\label{lem:exact}
The image of the map~${d\log\colon K^{M}_{n+1}(R,I)\to(\Omega^{n+1}_{R,I})^{cl}}$ is contained in the relative subgroup of exact differential forms~${d\,\Omega^{n}_{R,I}\subset (\Omega^{n+1}_{R,I})^{cl}}$, that is, we have a map
$$
d\log\;:\;K^{M}_{n+1}(R,I)\longrightarrow d\,\Omega^{n}_{R,I}\,.
$$
\end{lemma}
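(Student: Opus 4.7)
The plan is to use Lemma~\ref{lem:generMilnorK} to reduce the problem to a computation on explicit generators. Namely, it is enough to check that for every $0\leqslant i\leqslant n$, every ${r_1,\ldots,r_n\in R^*}$, and every $x\in I$, the element
$$
d\log\{r_1,\ldots,r_i,1+x,r_{i+1},\ldots,r_n\}\in \Omega^{n+1}_{R,I}
$$
lies in $d\,\Omega^n_{R,I}$.

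To do this, I would first exploit the identity $d(\log(1+x))=d(1+x)/(1+x)$, which holds thanks to the invertibility of $(N-1)!$ in $R$ and the vanishing $x^N=0$; this lets us rewrite the middle factor in the $d\log$ of the symbol as a genuine exact form. Expanding and permuting the wedge product, one obtains
$$
d\log\{r_1,\ldots,1+x,\ldots,r_n\}=(-1)^{i}\,d\big(\log(1+x)\big)\wedge\frac{dr_1}{r_1}\wedge\cdots\wedge\frac{dr_n}{r_n}.
$$
Since each logarithmic derivative $dr_j/r_j$ is closed, the Leibniz rule gives
$$
d\log\{r_1,\ldots,1+x,\ldots,r_n\}=(-1)^{i}\,d\!\left(\log(1+x)\cdot\frac{dr_1}{r_1}\wedge\cdots\wedge\frac{dr_n}{r_n}\right).
$$
The form inside the outer $d$ lies in $\Omega^n_{R,I}$: indeed, it equals $\log(1+x)\cdot(r_1\cdots r_n)^{-1}\cdot dr_1\wedge\cdots\wedge dr_n$, and since $\log(1+x)\in I$ the scalar factor lies in $I$, so Lemma~\ref{lem:generKahler} applies. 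This shows the image of each generator is exact in $\Omega^{n+1}_{R,I}$, and hence the whole image of $d\log$ lands in $d\,\Omega^n_{R,I}$.

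There is no serious obstacle here; the only thing to be careful about is that $\log(1+x)$ is both well-defined (one needs the factorials up through $N-1$ to be invertible, which is given) and lies in $I$ (which is clear from the definition of the truncated series and the nilpotency of $I$). Everything else is an immediate consequence of the Leibniz rule combined with the closedness of logarithmic forms. This computation will also be exactly what is used later to define the Bloch map $\B$: the chosen primitive $(-1)^i\log(1+x)\wedge\bigwedge_j dr_j/r_j$ is the canonical lift provided by the integration procedure.
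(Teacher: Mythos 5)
Your proposal is correct and is essentially the paper's own proof: both reduce to the generators supplied by Lemma~\ref{lem:generMilnorK} (with $n$ replaced by $n+1$) and verify the identity $d\log\{r_1,\ldots,1+x,\ldots,r_n\}=d\bigl((-1)^i\log(1+x)\,\tfrac{dr_1}{r_1}\wedge\cdots\wedge\tfrac{dr_n}{r_n}\bigr)$ with the primitive lying in $\Omega^n_{R,I}$ because $\log(1+x)\in I$. You merely spell out the intermediate Leibniz-rule computation that the paper states as formula~\eqref{eq:Blochmap}.
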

\begin{proof}
For all $i$, $0\leqslant i\leqslant n$, $r_1,\ldots,r_{n}\in R^*$, and $x\in I$, there is an equality
\begin{equation}\label{eq:Blochmap}
d\log\{r_1,\ldots,r_{i},1+x,r_{i+1},\ldots,r_{n}\}=
d\left((-1)^{i}\log(1+x)\frac{dr_{1}}{r_{1}}\wedge\ldots\wedge\frac{dr_{n}}{r_{n}}\right)\,.
\end{equation}
We finish the proof applying Lemma~\ref{lem:generMilnorK} with $n$ replaced by $n+1$.
\end{proof}

Our main object of study is the following map, which was introduced originally by Bloch~\cite[\S\,1]{Bloch75} (previous versions of this map were constructed by van der Kallen~\cite{vdKallen71} and Bloch~\cite{Bloch73}).

\begin{definition}\label{def:Blochmap}
A homomorphism of groups
$$
{\rm B}\;:\;K^{M}_{n+1}(R,I)\longrightarrow\Omega^{n}_{R,I}/d\,\Omega^{n-1}_{R,I}
$$
is called a {\it Bloch map} if for all $i$, $0\leqslant i\leqslant n$, $r_{1},\ldots,r_{n}\in R^{*}$, and $x\in I$, we have (cf. formula~\eqref{eq:Blochmap})
\begin{equation}\label{eq:defBloch}
{\rm B}\,\{r_1,\ldots,r_{i},1+x,r_{i+1},\ldots,r_{n}\}=
(-1)^{i}\log(1+x)\frac{dr_{1}}{r_{1}}\wedge\ldots\wedge\frac{dr_{n}}{r_{n}}\,,
\end{equation}
where, for simplicity, we denote similarly elements in $\Omega^n_{R,I}$ and their images under the quotient map $\Omega^n_{R,I}\to \Omega^n_{R,I}/d\,\Omega^{n-1}_{R,I}$.
\end{definition}

Sometimes we denote the Bloch map as in Definition~\ref{def:Blochmap} by $\B_{R,I}$ to specify the ring and the ideal. One can consider the Bloch map as an integral of the map $d\log$ from Lemma~\ref{lem:exact}.

\medskip

Let us discuss some general properties of the Bloch map. By Lemma~\ref{lem:generMilnorK}, a Bloch map is unique whenever it exists.  For $n=0$, the Bloch map always exists and coincides with the isomorphism~${\log\colon 1+I\stackrel{\sim}\longrightarrow I}$.

\begin{lemma}\label{lem:surjBlochmap}
Suppose that any element in $R$ is a sum of invertible elements and that the Bloch map $\B\colon K^{M}_{n+1}(R,I)\to\Omega^{n}_{R,I}/d\,\Omega^{n-1}_{R,I}$ exists. Then the Bloch map is surjective.
\end{lemma}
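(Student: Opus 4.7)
The plan is to show that every generator of $\Omega^{n}_{R,I}/d\,\Omega^{n-1}_{R,I}$ lies in the image of $\B$. By Lemma~\ref{lem:generKahler}, this quotient is generated by forms of two types: $x\,dr_1\wedge\ldots\wedge dr_n$ and $r_1\,dx\wedge dr_2\wedge\ldots\wedge dr_n$, with $x\in I$ and $r_i\in R$.

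First, I would reduce the second type to the first modulo $d\,\Omega^{n-1}_{R,I}$. Since $x\in I$, the form $r_1 x\,dr_2\wedge\ldots\wedge dr_n$ lies in $\Omega^{n-1}_{R,I}$, and by the Leibniz rule its differential equals
$$
d(r_1 x\,dr_2\wedge\ldots\wedge dr_n) = r_1\,dx\wedge dr_2\wedge\ldots\wedge dr_n + x\,dr_1\wedge dr_2\wedge\ldots\wedge dr_n.
$$
So generators of the two types agree up to sign in the quotient, and it suffices to realize every $x\,dr_1\wedge\ldots\wedge dr_n$ with $x\in I$ and $r_i\in R$ as a value of $\B$.

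Next, I would invoke the hypothesis: writing each $r_i$ as a finite sum of invertible elements and using additivity of $d\colon R\to\Omega^1_R$, multilinearity of the wedge product expresses $x\,dr_1\wedge\ldots\wedge dr_n$ as a $\mathbb{Z}$-linear combination of forms $x\,du_1\wedge\ldots\wedge du_n$ with all $u_i\in R^{*}$. Hence one may assume from the start that each $r_i$ is invertible.

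Finally, I would use that $(N-1)!$ is invertible in $R$ (the standing hypothesis of Subsection~\ref{subsec:Blochmap}) to define $\exp\colon I\to 1+I$ by the truncated series, which is inverse to $\log\colon 1+I\stackrel{\sim}{\to} I$. Setting $y:=x\,r_1\cdots r_n\in I$ and $x':=\exp(y)-1\in I$, one has $\log(1+x')=y$, so formula~\eqref{eq:defBloch} with $i=0$ yields
$$
\B\,\{1+x',r_1,\ldots,r_n\} = \log(1+x')\,\frac{dr_{1}}{r_{1}}\wedge\ldots\wedge\frac{dr_{n}}{r_{n}} = x\,dr_1\wedge\ldots\wedge dr_n,
$$
as required. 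I do not foresee a serious obstacle; the most delicate step is the multilinearity reduction, but it is immediate once each $r_i$ is expressed as a sum of units and one uses additivity of $d$ together with the standard multilinearity of the wedge product.
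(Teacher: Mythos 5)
Your proof is correct and follows essentially the same route as the paper: reduce to generators $x\,dr_1\wedge\ldots\wedge dr_n$ via Lemma~\ref{lem:generKahler} and the Leibniz rule, use the sum-of-units hypothesis to assume $r_i\in R^*$, and realize these forms as $\B\,\{\exp(xr_1\cdots r_n),r_1,\ldots,r_n\}$. Your explicit remark that $\log(\exp(y))=y$ for $y\in I$ (using the invertibility of $(N-1)!$ and $y^N=0$) is exactly the identity the paper uses implicitly.
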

\begin{proof}
For all ${r_1,\ldots,r_{n}\in R}$ and~${x\in I}$, there is an equality in $\Omega^n_{R,I}$
$$
{r_1\,dx\wedge dr_2\wedge\ldots\wedge dr_{n}}=d(r_1x\,dr_2\wedge\ldots\wedge dr_n)-x\,dr_1\wedge dr_2\wedge\ldots\wedge dr_n\,.
$$
Hence by Lemma~\ref{lem:generKahler}, it is enough to show that classes in $\Omega^n_{R,I}/d\,\Omega^{n-1}_{R,I}$ of differential forms of type $x\,dr_1\wedge \ldots\wedge dr_n$ are in the image of the Bloch map.

Since $R$ is generated additively by invertible elements, it is enough to consider the case when $r_1,\ldots,r_n\in R^*$. Then there is an equality
$$
\B\,\{\exp(xr_1\ldots r_n),r_1,\ldots,r_n\}=x\,dr_1\wedge\ldots\wedge dr_n\,,
$$
where for an element $y\in I$, we put
$$
\exp(y):=1+y+\frac{y}{2}+\ldots+\frac{y^{N-1}}{(N-1)!}\in R\,.
$$
This proves the lemma.
\end{proof}

The following simple observation claims the existence of the Bloch map in some special cases.

\begin{remark}\label{rmk:Blochmap}
The Bloch map~${{\rm B}\colon K^{M}_{n+1}(R,I)\to\Omega^{n}_{R,I}/d\,\Omega^{n-1}_{R,I}}$ exists when there is a vanishing~${H^n_{dR}(R,I)=0}$. The reason is that the latter condition is equivalent to requiring that the de Rham differential gives an isomorphism
$$
d\;:\;\Omega^n_{R,I}/d\,\Omega^{n-1}_{R,I}\stackrel{\sim}\longrightarrow d\,\Omega^{n}_{R,I}\,.
$$
The Bloch map is equal to the composition of the map $d\log$ from Lemma~\ref{lem:exact} and the inverse of this isomorphism.
\end{remark}

Consider an example, which was treated also in~\cite{GorchinskiyOsipov2015Miln}.

\begin{example}\label{examp:Blochmap2}
Let $S$ be a ring such that~$2$ is invertible in it. Let $\varepsilon$ be a formal variable such that $\varepsilon^2=0$.  Then the ring $R=S[\varepsilon]$ and the ideal ${I=(\varepsilon)}$ satisfy $H^n_{dR}(R,I)=0$ for any $n\geqslant 0$ (see Lemma~\ref{lem:trivdR} and Proposition~\ref{prop:trivdR} below for generalizations of this fact). Indeed, using the equalities ${\varepsilon\, d\varepsilon =\frac{1}{2}d(\varepsilon^2)=0}$, one shows that there is a decomposition
$$
\Omega^{n+1}_{S[\varepsilon],(\varepsilon)}\simeq (\varepsilon\, \Omega^{n+1}_S)\oplus (d\varepsilon \wedge \Omega^n_S)\,.
$$
This implies that there are isomorphisms
\begin{equation}\label{eq:isom}
\Omega^n_S\stackrel{\sim}\longrightarrow \Omega^n_{S[\varepsilon],(\varepsilon)}/d\,\Omega^{n-1}_{S[\varepsilon],(\varepsilon)}\,,\qquad \omega\longmapsto \varepsilon\,\omega\,,
\end{equation}
$$
\Omega^n_S\stackrel{\sim}\longrightarrow d\,\Omega^{n}_{S[\varepsilon],(\varepsilon)}\,,\qquad \omega\longmapsto d(\varepsilon\,\omega)=\varepsilon\, d\omega+d\varepsilon\wedge\omega\,.
$$
Therefore, there is an isomorphism
$$
d\;:\;\Omega^n_{S[\varepsilon],(\varepsilon)}/d\,\Omega^{n-1}_{S[\varepsilon],(\varepsilon)}\stackrel{\sim}\longrightarrow d\,\Omega^{n}_{S[\varepsilon],(\varepsilon)}\,.
$$
Thus by Remark~\ref{rmk:Blochmap}, the Bloch map exists in this case.

Moreover, one checks directly that the Bloch map coincides with the composition of the map ${K_{n+1}^M\big(S[\varepsilon],(\varepsilon)\big)\to \Omega^n_{S}}$ constructed in~\cite[Def.\,2.7]{GorchinskiyOsipov2015Miln} and the isomorphism~\eqref{eq:isom}. In particular, for $a\in S$ and ${b_1,\ldots,b_n\in S^*}$, we have (cf.~\cite[Exam.\,2.8]{GorchinskiyOsipov2015Miln})
$$
{\rm B}\,\{1+ab_1\ldots b_n\varepsilon,b_1,\ldots,b_n\}=\varepsilon\, adb_1\wedge\ldots\wedge db_n\in \Omega^n_{S[\varepsilon],(\varepsilon)}/d\,\Omega^{n-1}_{S[\varepsilon],(\varepsilon)}\,.
$$
\end{example}

\subsection{Main results}\label{subsec:mainresults}

The following statement asserts the existence of the Bloch map in the case when the quotient~$R/I$ splits out of $R$.

\begin{theorem}\label{the:Bloch}
Let $I\subset R$ be a nilpotent ideal and $N\geqslant 1$ be a natural number such that $I^N=0$. Suppose that the quotient map~${R\to R/I}$ admits a splitting by a ring homomorphism~${R/I\to R}$ and that~$N!$ is invertible in~$R$. Then for any natural number $n\geqslant 0$, the Bloch map
$$
{\rm B}\;:\;K^{M}_{n+1}(R,I)\longrightarrow\Omega^{n}_{R,I}/d\,\Omega^{n-1}_{R,I}
$$
exists.
\end{theorem}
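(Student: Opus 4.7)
The plan is to reduce the existence of $\B$ to the case of finite free split nilpotent extensions $(R_{N,m}, I_{N,m})$ with $R_{N,m} = S[t_1,\ldots,t_m]/(t_1,\ldots,t_m)^N$, where $S = R/I$ is identified with a subring of $R$ via the given splitting. On such finite free objects, Remark~\ref{rmk:Blochmap} applies directly provided one has $H^n_{dR}(R_{N,m}, I_{N,m}) = 0$, so the Bloch map on an arbitrary $(R,I) \in \SNilp_N(S)$ is then assembled by extending a compatible system along the formalism of finitely freely approximable functors.

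First I would prove the vanishing $H^n_{dR}(R_{N,m}, I_{N,m}) = 0$ (this is what Proposition~\ref{prop:trivdR} presumably supplies) by a Poincar\'e-type contracting homotopy. The Euler derivation $E = \sum_j t_j \partial_{t_j}$ and the contraction $h$ against the radial vector field satisfy Cartan's formula $dh + hd = E$ on the absolute de Rham complex. The operator $E$ preserves the natural grading by total degree in the $t_j$'s, and on the relative complex this grading is concentrated in weights $k \in \{1,\ldots,N-1\}$, where $E$ acts by multiplication by $k$. Since $(N-1)!$ divides $N!$ and $N!$ is invertible in $R$ (hence in $S$), $E$ is invertible on $\Omega^\bullet_{R_{N,m}, I_{N,m}}$, so $E^{-1} h$ is a contracting homotopy. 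By Remark~\ref{rmk:Blochmap}, this produces a Bloch map $\B_{N,m}$ for every finite free object.

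Next I would verify that the two functors $F_1, F_2 \colon \SNilp_N(S) \to \Ab$ defined by $F_1(R,I) = K^M_{n+1}(R,I)$ and $F_2(R,I) = \Omega^n_{R,I}/d\,\Omega^{n-1}_{R,I}$ are finitely freely approximable in the sense of the anticipated Proposition~\ref{prop:elappr1}. For $F_2$, the standard compatibility of Kähler differentials with colimits of rings, combined with the explicit generators of Lemma~\ref{lem:generKahler}, should reduce the verification to routine bookkeeping. For $F_1$ one uses the explicit generators of Lemma~\ref{lem:generMilnorK}: each generating symbol $\{r_1,\ldots,r_i,1+x,r_{i+1},\ldots,r_n\}$ involves only finitely many invertible elements of $R$ and finitely many elements of $I$, so it factors through a map $R_{N,m} \to R$ for suitable $m$ (with some $t_j$ specialising to $x$ and other variables absorbing the $r_k$'s). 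The Bloch maps $\{\B_{N,m}\}$ form a natural transformation between the restrictions of $F_1$ and $F_2$ to finite free objects, because Lemma~\ref{lem:generMilnorK} together with the characterising formula~\eqref{eq:defBloch} makes them manifestly functorial. The criterion supplied by Lemma~\ref{lem:isomapprox} (or its "extension" variant) then extends $\{\B_{N,m}\}$ uniquely to a natural transformation $\B \colon F_1 \to F_2$ on all of $\SNilp_N(S)$, and formula~\eqref{eq:defBloch} propagates to this extension because any generator of the right-hand form comes from a finite free model.

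The main obstacle I expect lies in establishing that $F_1$ is finitely freely approximable. Unlike $\Omega^\bullet$, the Milnor K-group is defined as a quotient involving both additive and multiplicative structure, and the tricky point is that to realise a Steinberg relation $\{r,1-r\}$ inside a finite free model $R_{N,m}$ one must lift \emph{both} $r$ and $1-r$ to units; invertibility in $R_{N,m}$ is controlled by the residue in $S$, so some of the needed lifts must be chosen with care. Producing a sufficient supply of ring maps $R_{N,m} \to R$ realising all generators and relations simultaneously is the technical heart of the argument, and this is presumably exactly what Proposition~\ref{prop:elappr1} is designed to handle.
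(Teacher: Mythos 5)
Your overall strategy --- vanishing of relative de Rham cohomology on the finite free objects $(R_{N,m},I_{N,m})$, then transport to arbitrary $(R,I)$ via finitely freely approximable functors --- is the same as the paper's, but your proof of the vanishing has a genuine gap. For $m\geqslant 2$ the relative complex $\Omega^{\bullet}_{R_{N,m},I_{N,m}}$ is \emph{not} concentrated in total weights $1,\ldots,N-1$. Already for $N=m=2$ the weight-two piece of $\Omega^1_{R_{2,2}}$ is non-zero: the relations there are $2\bar t_1d\bar t_1=0$, $2\bar t_2d\bar t_2=0$ and $\bar t_1d\bar t_2+\bar t_2d\bar t_1=0$, so the class of $\bar t_1d\bar t_2$ survives. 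In general non-zero components occur in all weights up to $N+m-2$. Consequently the Euler operator is invertible on the relative complex only when $(N+m-2)!$ is invertible in $S$, which is strictly stronger than the hypothesis that $N!$ is invertible, and the colimit argument needs every $m$; so the contracting homotopy ``$E^{-1}h$'' simply does not exist under the stated assumptions. The paper circumvents exactly this: Lemma~\ref{lem:trivdR} grades by the degree in a \emph{single} variable $t_m$ with coefficients in $S[t_1,\ldots,t_{m-1}]$, where the weights are bounded by $N-1$ because $N\bar t_m^{\,N-1}d\bar t_m=d(\bar t_m^{\,N})=0$, and Proposition~\ref{prop:trivdR} then inducts on $m$ using the long exact sequence for $(\bar t_m)\subset I_{N,m}\subset R_{N,m}$; the remark following it states explicitly that the all-variables Euler argument would require $(N+m-2)!$ to be invertible. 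Your vanishing step must be reorganized into this one-variable-at-a-time induction.

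The remainder of your plan agrees with the paper's proof: naturality of the Bloch maps on finite free objects follows from their uniqueness (Lemma~\ref{lem:generMilnorK}) together with formula~\eqref{eq:defBloch}, and the colimit formalism then produces the map on all of $\SNilp_N(S)$ --- the paper packages this as $(d\log)^{fa}$ composed with the inverse of the isomorphism $(d)^{fa}$ of Corollary~\ref{cor:nonapprox}(ii), which is the same mechanism as your ``extension along the colimit''. One caution: your argument that $K^M_{n+1}(-,-)$ is finitely freely approximable only addresses surjectivity of the comparison map (each generator factors through a finite free model); the substantive issue is injectivity, i.e.\ that all relations are already realized in the colimit, and this is what the proof of Proposition~\ref{prop:elappr1} provides via condition~(ii) and the auxiliary object $R'[[t]]/(I',t)^N$. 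You correctly identify this as the technical heart and defer to that proposition, so this is not an error, but it is the part of the argument that cannot be dismissed as bookkeeping.
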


Theorem~\ref{the:Bloch} is proved in Section~\ref{sec:Bloch}.

\medskip

In order to state our main result, we need to introduce the following notion, which goes back to Morrow~\cite[Def.\,3.1]{Morrow}.

\begin{definition}\label{def:weakst}
Given a natural number $k\geqslant2$, a ring $R$ is called {\it weakly \mbox{$k$-fold} stable} if for any collection of elements  $r_{1},\ldots, r_{k-1}\in R$,
there exists $r\in R^{*}$ such that ${r_{1}+r,\ldots, r_{k-1}+r\in R^*}$.
\end{definition}

For example, a ring $R$ is weakly $2$-fold stable if and only if any element in~$R$ is a sum of two invertible elements. It is easy to see that, given an ideal $I\subset R$ such that all elements in $1+I$ are invertible in $R$, the quotient $R/I$ is weakly $k$-fold stable if and only if the initial ring $R$ is weakly $k$-fold stable. In particular, $S$ is a weakly $k$-fold stable ring if and only if the same holds for~$S[[t]]$. See more details on weak stability, e.g., in~\cite[\S\,2.2]{GorchinskiyOsipov2015Miln}.

The pair $(R,I)$ as in Theorem~\ref{the:Bloch} is a split nilpotent extension of the quotient~$R/I$ (see Definition~\ref{def:splitnilp} below). Our main result claims that the Bloch map is an isomorphism for split nilpotent extensions with sufficiently many invertible elements.

\begin{theorem}\label{thm:main}
Let $I\subset R$ be a nilpotent ideal and $N\geqslant 1$ be a natural number such that $I^N=0$. Suppose that the quotient map~${R\to R/I}$ admits a splitting by a ring homomorphism~${R/I\to R}$, that~$N!$ is invertible in~$R$, and that $R$ is weakly $5$-fold stable. Then for any natural number $n\geqslant 0$, the Bloch map is an isomorphism
$$
{\rm B}\;:\;K_{n+1}^M(R,I)\stackrel{\sim}\longrightarrow \Omega^n_{R,I}/d\,\Omega^{n-1}_{R,I}\,.
$$
\end{theorem}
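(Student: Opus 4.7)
The plan is to follow the reduction scheme outlined in the introduction, ultimately reducing the theorem to an explicit calculation in the second Milnor $K$-group of a power series ring. The Bloch map exists by Theorem~\ref{the:Bloch}, so the task is to show it is bijective. Weak $5$-fold stability implies weak $2$-fold stability, so every element of $R$ is a sum of two invertible elements and surjectivity of $\B$ for all $n\geqslant 0$ follows immediately from Lemma~\ref{lem:surjBlochmap}. The work therefore lies in injectivity.

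\textbf{Reduction to finite free extensions.} Setting $S=R/I$, the pair $(R,I)$ is an object of the category $\SNilp_N(S)$. I would first show that the two functors $(R,I)\mapsto K_{n+1}^M(R,I)$ and $(R,I)\mapsto \Omega^n_{R,I}/d\,\Omega^{n-1}_{R,I}$ on $\SNilp_N(S)$ are finitely freely approximable (Lemma~\ref{prop:elaprKOmgega} and Corollary~\ref{cor:nonapprox}). The Bloch map is a natural transformation between them, and Lemma~\ref{lem:isomapprox} then reduces the claim that $\B$ is an isomorphism on every object of $\SNilp_N(S)$ to the claim that it is an isomorphism on each finite free object $(R_{N,m},I_{N,m})$, where $R_{N,m}=S[t_1,\ldots,t_m]/(t_1,\ldots,t_m)^N$.

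\textbf{Reduction to a single non-split base case.} Next I would combine the exact sequences~\eqref{eq:exactMilnor} and~\eqref{eq:longseqforms} with the auxiliary Lemma~\ref{lem:embed} to pass from $(R_{N,m},I_{N,m})$ to the one-variable case $(R_N,(\bar t\,))$ with $R_N=S[t]/(t^N)$; this is Proposition~\ref{prop:m=1}. Applying~\eqref{eq:exactMilnor} and~\eqref{eq:longseqforms} once more with $J=(\bar t^{\,N-1})$ and inducting on $N$ (noting that the Bloch map is compatible with the maps in these sequences) further reduces the problem to showing that $\B$ is an isomorphism for the non-split pair $(R_N,(\bar t^{\,N-1}))$.

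\textbf{The base case and main obstacle.} This base case is the main technical obstacle, since it lies outside $\SNilp_N(S)$ and demands explicit symbol manipulation. The strategy is to lift to the power series ring $S[[t]]$ and analyze its second Milnor $K$-group through the decreasing filtration $V_p$ induced by $1+t^pS[[t]]\subset S[[t]]^*$. The central technical result, proved throughout Section~\ref{sect:powerseries}, is the identification
$$
V_p=K_2^M\bigl(S[[t]],(t^p)\bigr)
$$
under the hypotheses that $2p$ is invertible in $S$ and $S$ is weakly $5$-fold stable (Proposition~\ref{prop:vanish} and Corollary~\ref{cor:vanish}). Together with the symbol-to-form constructions of Subsection~\ref{subsec:constrsymbforms} and the main theorem of~\cite{GorchinskiyOsipov2015Miln}, this identification yields the isomorphism $\B$ for $(R_N,(\bar t^{\,N-1}))$ in degree two; higher $n$ is obtained via compatibility of $\B$ with the product structure on Milnor $K$-theory. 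The hardest sub-step is the identification itself: it proceeds by induction on $p$, exploiting the auxiliary ring $S'=S[[x]]$ together with the family of $S[[t]]$-algebra maps $S'[[t]]\to S[[t]]$ sending $x\mapsto t^q$ for varying $q\geqslant 1$ to produce the needed lifts (Lemma~\ref{lem:lift}), and rests on the key technical Proposition~\ref{prop:key} that controls symbols modulo $V_{p+1}$.
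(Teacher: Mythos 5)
Your reduction scheme---finitely freely approximable functors plus Lemma~\ref{lem:isomapprox} to pass to the finite free objects $(R_{N,m},I_{N,m})$, then Lemma~\ref{lem:embed} with induction on $m$ and on $N$ to land on the non-split pair $(\bar t^{\,N-1})\subset R_N$---is exactly the paper's route, and up to that point the proposal is sound (modulo the detail that the induction on $m$ needs the one-variable case over the larger base $R_{N,m-1}$, not just over $S$). The gap is in how you finish the base case. You claim that the identification $V_p=K_2^M\big(S[[t]],(t^p)\big)$ together with \cite{GorchinskiyOsipov2015Miln} gives the isomorphism for $(\bar t^{\,N-1})\subset R_N$ \emph{in degree two}, and that ``higher $n$ is obtained via compatibility of $\B$ with the product structure on Milnor $K$-theory.'' No such multiplicative bootstrap is available: the target $\Omega^n_{R,I}/d\,\Omega^{n-1}_{R,I}$ carries no ring structure for which $\B$ is multiplicative, and an isomorphism in degree two does not formally propagate to higher degrees---neither injectivity nor surjectivity in degree $n+1$ follows from knowing the degree-two case and taking products. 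This is exactly where the remaining work lies, and the sketch does not supply it.

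The paper closes the base case for all $n$ simultaneously, and differently: it uses the map $\sigma\colon R_2\to R_N$, $\bar t\mapsto\bar t^{\,N-1}$, and a diagram chase with three inputs. First, Proposition~\ref{prop:k2}: $\sigma\colon K^M_{n+1}\big(R_2,(\bar t\,)\big)\to K^M_{n+1}\big(R_N,(\bar t^{\,N-1})\big)$ is surjective for \emph{every} $n$; this is precisely where Corollary~\ref{cor:vanish} (the degree-two filtration result $V_p=W_p$) enters---it kills every symbol containing two entries of the form $1+a\bar t^{\,N-1}$, $1+b\bar t^{\,j}$, so that, after Lemma~\ref{lem:generMilnorK} and the anticommutativity Lemma~\ref{lem:sign}, the relative group is generated by symbols $\{1+a\bar t^{\,N-1},r_1,\ldots,r_n\}$ with all $r_i\in S^*$, which visibly lie in the image of $\sigma$. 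Second, Lemma~\ref{lem:isomdiff}: $\sigma$ is an isomorphism on relative forms $\Omega^n_{R_2,(\bar t\,)}\to\Omega^n_{R_N,(\bar t^{N-1})}$. Third, \cite[Theor.\,2.9]{GorchinskiyOsipov2015Miln} is already a statement for all $n$, giving the Bloch isomorphism for $R_2$ in every degree. Thus the degree-two computation $V_p=W_p$ serves as input to a generation/surjectivity statement in all degrees, not as the endpoint from which higher degrees are bootstrapped by products; to repair your proposal, replace the product-structure step by this surjectivity argument (or an equivalent one).
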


Theorem~\ref{thm:main} is proved in Section~\ref{sec:proofmain}. Note that Gorchinskiy and \mbox{Osipov}~\cite[Theor.\,2.9]{GorchinskiyOsipov2015Miln} proved a special case of Theorem~\ref{thm:main} when $R=S[\varepsilon]$, $I=(\varepsilon)$, where $\varepsilon^2=0$ and $S$ is a ring such that $2$ is invertible in it and $S$ a weakly $5$-fold stable (see also Example~\ref{examp:Blochmap2}).

\medskip

We show in Proposition~\ref{prop:nonBloch} below that the Bloch map need not exist if we drop the assumption that $R/I$ splits out of~$R$. So, Theorem~\ref{the:Bloch} and henceforth Theorem~\ref{thm:main} fail in a non-split case.

Moreover, it is easy to see that there is no a functorial isomorphism between~${K_{n+1}^M(R,I)}$ and~${\Omega^n_{R,I}/d\,\Omega^{n-1}_{R,I}}$ when $R/I$ does not necessarily split out of~$R$. Indeed, given an ideal ${J\subset R}$ contained in~$I$, one has an exact sequence~\eqref{eq:exactMilnor} from Subsection~\ref{subsec:Milnorbasic} for relative Milnor $K$-groups, while for relative modules of differential forms one has an exact sequence~\eqref{eq:longseqforms} from Subsection~\ref{subsec:basicdiffforms} with, possibly, a non-trivial term~${H^{n-1}_{dR}(R',I')}$, where~${R'=R/J}$ and~${I'=I/J}$. This argument is also used in the proof of Proposition~\ref{prop:nonBloch}.

\begin{remark}\label{rem:nonsplit}
An interesting problem is to find an alternative formulation of Theorem~\ref{thm:main} that would be valid in a non-split case as well. One way might be to replace~${\Omega^n_{R,I}/d\,\Omega^{n-1}_{R,I}}$ with the group
$$
{\rm Im}\big(\Omega^n_{R,I}/d\,\Omega^{n-1}_{R,I}\to \Omega^n_{R}/d\,\Omega^{n-1}_{R}\big)=
\Ker\big(\Omega^n_{R}/d\,\Omega^{n-1}_{R}\to \Omega^n_{R/I}/d\,\Omega^{n-1}_{R/I}\big)\,.
$$
This group coincides with $\Omega^n_{R,I}/d\,\Omega^{n-1}_{R,I}$ when $R/I$ splits out of $R$.

A more sophisticated way, which also goes along with Goodwillie's theorem~\cite{Goodwillie}, is to consider the groups~${K_{n+1}^M(R,I)}$ and~${\Omega^n_{R,I}/d\,\Omega^{n-1}_{R,I}}$ in Theorem~\ref{thm:main} as degree zero cohomology of certain (non-positively graded) complexes. Namely, one can show that the group~$\Omega^n_{R}/d\,\Omega^{n-1}_{R}$ is isomorphic to the degree zero cohomology group of the complex
$$
{\rm cone}\big(F^{n+1}\LL\Omega^{\bullet}_{R}\to \LL\Omega^{\bullet}_{R}\big)[n]\,,
$$
where $\LL\Omega^{\bullet}_R$ is the derived de Rham complex of $R$ and $F^{\bullet}$ is the Hodge filtration on it. Thus a natural substitute for ${\Omega^n_{R,I}/d\,\Omega^{n-1}_{R,I}}$ is the degree zero cohomology group of the complex
$$
{\rm cone}\big(F^{n+1}\LL\Omega^{\bullet}_{R,I}\to \LL\Omega^{\bullet}_{R,I}\big)[n]\,,
$$
where
$$
\LL\Omega^{\bullet}_{R,I}\simeq {\rm cone}\big(\LL\Omega^{\bullet}_R\to \LL\Omega^{\bullet}_{R/I}\big)[-1]
$$
is the relative derived de Rham complex. Again, this group coincides with~$\Omega^n_{R,I}/d\,\Omega^{n-1}_{R,I}$ when $R/I$ splits out of $R$.

It is not clear what should be a right complex for Milnor $K$-groups. It seems possible that this might involve a version for commutative simplicial rings of Goncharov's complexes~\cite{Goncharov}, giving sort of derived Milnor $K$-groups.
\end{remark}

\section{Proof of Theorem~\ref{the:Bloch}}\label{sec:Bloch}

Fix a ring~$S$. Let $N\geqslant 1$ be a natural number, which will be the nilpotency degree.

\subsection{Split nilpotent extensions}\label{subsec:splitnilp}

We will work with the following objects.

\begin{definition}\label{def:splitnilp}
\hspace{0cm}
\begin{itemize}
\item[(i)]
A {\it split nilpotent extension of~$S$ of nilpotency degree $N$} is a pair $(R,I)$, where $R$ is an \mbox{$S$-algebra} and~${I\subset R}$ is a nilpotent ideal such that $I^N=0$ and the summation map $S\oplus I\to R$ is an isomorphism of $S$-modules.
\item[(ii)]
A {\it morphism of split nilpotent extensions} $(R,I)\to (R',I')$ is a morphism of $S$-algebras $f\colon R\to R'$ such that~${f(I)\subset I'}$.
\item[(iii)]
By
$$
\SNilp_N(S)
$$
denote the category of split nilpotent extensions of $S$ of nilpotency degree~$N$.
\end{itemize}
\end{definition}

In particular, a $S$-algebra $R$ as in Definition~\ref{def:splitnilp}(i) is augmented, that is, it is fixed a (surjective) homomorphism of $S$-algebras $R\to S$ whose kernel is~$I$. Note that giving a split nilpotent extension of $S$ of nilpotency degree $N$ is the same as giving a $S$-module $I$ with a commutative associative product map~$I\otimes_S I\to I$ of nilpotency degree $N$. Indeed, such $S$-module $I$ corresponds to the split nilpotent extension $(S\oplus I,I)$ of $S$.

A morphism of split nilpotent extensions is the same as a homomorphism of augmented $S$-algebras.

\medskip

We will use the following notation: if a ring $R$ is a quotient of the ring~$S[t_1,\ldots,t_m]$ of polynomials in formal variables $t_1,\ldots,t_m$, $m\geqslant 1$, we denote the image of~$t_i$ in $R$ by~$\bar t_i$ for each $i$, ${1\leqslant i \leqslant m}$.


\begin{definition}\label{def:elem}
An object in $\SNilp_N(S)$ is {\it finite free} if it is isomorphic to
$$
(R_{N,m},I_{N,m}):=\big(S[t_{1},\ldots,t_m]/(t_1,\ldots,t_m)^N\,,(\bar t_1,\ldots,\bar t_m)\big)
$$
for some natural number $m\geqslant 0$.
\end{definition}

Split nilpotent extensions as in Definition~\ref{def:elem} are indeed finite free objects in~$\SNilp_N(S)$ in the following sense: they are values on finite sets of the left adjoint functor to the functor from~$\SNilp_N(S)$ to the category of sets that sends~$(R,I)$ to the ideal $I$ considered as a set.

\subsection{Vanishing of relative de Rham cohomology}\label{subsec:reldR}


Proposition~\ref{prop:trivdR} below claims the vanishing of relative de Rham cohomology (see Subsection~\ref{subsec:basicdiffforms}) for finite free split nilpotent extensions under an additional invertibility condition. The proof of this fact is based on the following lemma, whose main argument is the action of the Euler vector field on differential forms by a Lie derivative.

\begin{lemma}\label{lem:trivdR}
Let $J\subset S[t]$ be an ideal such that ${t^N\in J}$ and $J$ is generated by monomials in $t$ (which may be of degree zero, that is, be elements of $S$). Let $R=S[t]/J$ and $I=(\bar t\,)\subset R$. Suppose that $N!$ is invertible in $S$. Then for all~$n\geqslant 0$, we have $H^n_{dR}(R,I)=0$ .
\end{lemma}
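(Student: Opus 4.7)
The plan is to contract the relative de Rham complex $\Omega^\bullet_{R,I}$ via the Euler derivation $E=\bar t\,\partial_{\bar t}$, using Cartan's magic formula $L_E=d\,\iota_E+\iota_E\,d$. First, I would put a $\Z_{\geqslant 0}$-grading on $S[t]$ by giving $t$ degree $1$ and all of $S$ degree $0$, and extend this to $\Omega^\bullet_{S[t]}$ by setting $\deg(dt)=1$ and $\deg(ds)=0$ for $s\in S$. The hypothesis that $J$ is generated by monomials in $t$ (with coefficients in $S$) makes $J$ a homogeneous ideal; consequently both $J\cdot\Omega^n_{S[t]}$ and $dJ\wedge\Omega^{n-1}_{S[t]}$ are homogeneous subgroups, and the grading descends to $R$ and to $\Omega^\bullet_R$. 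Because the quotient $R\to S$ kills $\bar t$ (hence all positive-degree homogeneous pieces) and is the identity on the degree-$0$ part, the relative complex decomposes as
$$
\Omega^\bullet_{R,I}\;=\;\bigoplus_{k\geqslant 1}\bigl(\Omega^\bullet_R\bigr)_k.
$$

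Next, I would bound the range of nonzero degrees. From $t^N\in J$ we get $\bar t^{\,k}=0$ in $R$ for $k\geqslant N$, and the identity $\bar t^{\,k-1}d\bar t=\tfrac{1}{k}\,d(\bar t^{\,k})$ (meaningful because $k$ is a unit in $S$) then gives $\bar t^{\,k-1}d\bar t=0$ for $k\geqslant N$ as well. Hence every degree-$k$ monomial in $\Omega^\bullet_R$ vanishes for $k\geqslant N$, so $\Omega^\bullet_{R,I}$ is supported in degrees $1\leqslant k\leqslant N-1$. The Euler derivation descends from $S[t]$ to $R$ because $J$ is homogeneous (on a monomial generator $a_jt^{k_j}$, we have $E(a_jt^{k_j})=k_ja_jt^{k_j}\in J$, and the Leibniz rule extends this to all of $J$). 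The induced Lie derivative $L_E$ on $\Omega^\bullet_R$ acts as multiplication by $k$ on the degree-$k$ component. Since $N!$ is invertible in $S$, each of $1,2,\ldots,N-1$ is a unit, so $L_E$ restricts to an isomorphism on every $\Omega^n_{R,I}$ and can be inverted summand by summand.

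Finally, I would define $h:=L_E^{-1}\circ\iota_E$ on $\Omega^\bullet_{R,I}$; this lands in $\Omega^{n-1}_{R,I}$ because $\iota_E$ preserves the Euler grading (it replaces a $d\bar t$ of degree $1$ by a $\bar t$ of degree $1$). Since $L_E$ commutes with $d$, so does $L_E^{-1}$, and Cartan's formula yields $dh+hd=\mathrm{id}$ on $\Omega^\bullet_{R,I}$. This contracting homotopy gives $H^n_{dR}(R,I)=0$ for all $n\geqslant 0$. The only delicate point I anticipate is verifying that $E$ descends to a derivation of $R$ and that its Lie derivative preserves $J\cdot\Omega^\bullet_{S[t]}+dJ\wedge\Omega^\bullet_{S[t]}$; this is precisely where the monomial hypothesis on $J$ is essential and would fail for a non-monomial generator. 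The homogeneity of the relative complex, its vanishing in degrees $\geqslant N$, and the invertibility of $L_E$ on each piece are then straightforward bookkeeping.
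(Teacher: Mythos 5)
Your proposal is correct and follows essentially the same route as the paper: the paper uses the same internal grading descending through the monomial ideal, the same decomposition of $\Omega^\bullet_{R,I}$ into positive-degree components that vanish in degrees $\geqslant N$, and a contracting homotopy which it writes out explicitly and then identifies as contraction with the Euler vector field combined with the Lie-derivative identity $d\circ h+h\circ d=\text{(degree)}$, i.e.\ exactly your $h=L_E^{-1}\iota_E$ argument. One small wording fix: for $k>N$ the integer $k$ need not be a unit in $S$ (only $N!$ is assumed invertible), but the vanishing you want follows anyway from the case $k=N$, since $\bar t^{\,k-1}d\bar t=\bar t^{\,k-N}\cdot\bar t^{\,N-1}d\bar t=\bar t^{\,k-N}\cdot\tfrac{1}{N}\,d(\bar t^{\,N})=0$, so only invertibility of $N$ (and of $1,\ldots,N-1$ for inverting $L_E$ on the relevant components) is actually used.
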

\begin{proof}
Consider a standard graded ring structure on $S[t]$ such that elements of~$S$ have degree zero and $t$ has degree one. The de Rham complex~$\Omega^{\bullet}_{S[t]}$ becomes naturally a graded complex. We call the degree that corresponds to this grading an internal degree in order to distinguish it from the degree of differential forms. Explicitly, for all $n,i\geqslant 0$, the homogenous component~$\big(\Omega^n_{S[t]}\big)_{i}$ in
$$
\Omega^n_{S[t]}\simeq \Omega_S^n[t]\oplus \Omega_S^{n-1}[t]dt
$$
of internal degree $i$ consists of differential forms of type
\begin{equation}\label{eq:formform}
\mbox{$\omega\cdot t^i+t^{i-1}\eta\wedge dt$}\,,
\end{equation}
where $\omega\in \Omega^n_S$ and $\eta\in \Omega^{n-1}_S$. In particular, we have $\big(\Omega^n_{S[t]}\big)_{0}=\Omega_S^n$ for $n\geqslant 0$.

For all $n,i\geqslant 0$, define a $S$-linear map
$$
h\;:\;\big(\Omega^{n}_{S[t]}\big)_{i}\longrightarrow\big(\Omega^{n-1}_{S[t]}\big)_{i}\,,\qquad
h(\mbox{$\omega\cdot t^i+t^{i-1}\eta\wedge dt$})=
(-1)^{n-1}\eta\cdot t^{i}\,.
$$
By definition, $h$ vanishes on $\Omega^0_{S[t]}$ and on $\big(\Omega^{n}_{S[t]}\big)_{0}$, where $n\geqslant 1$. One checks directly that the restriction of the map $d\circ h+h\circ d$ to the homogenous component $\big(\Omega^{n}_{S[t]}\big)_i$ coincides with multiplication by $i$.

Note that an ideal in $S[t]$ is generated by monomials if and only if it is homogenous. In particular, the ideal $J$ is homogenous. Hence, we obtain a graded ring structure on $R$, which induces a graded complex structure on the de Rham complex $\Omega^{\bullet}_R$. The quotient map ${\Omega^{\bullet}_{S[t]}\to \Omega^{\bullet}_R}$ preserves the internal degree and $\big(\Omega^n_R\big)_i$ is the quotient of the $S$-module~$\big(\Omega^{n}_{S[t]}\big)_{i}$ for all $n,i\geqslant 0$. We claim that there is an equality
\begin{equation}\label{eq:decompRI}
\Omega^{\bullet}_{R,I}=\mbox{$\bigoplus\limits_{i=1}^{N-1}\big(\Omega^{\bullet}_R\big)_{i}$}\,.
\end{equation}
Indeed, formula~\eqref{eq:formform} implies that $\big(\Omega^{\bullet}_R\big)_i\subset \Omega^{\bullet}_{R,I}$ for any $i\geqslant 1$. In addition, the quotient~$R/I$ splits out of $R$ as the homogenous component of degree zero and we have an isomorphism $\big(\Omega^{\bullet}_R\big)_0\simeq \Omega^{\bullet}_{R/I}$. Therefore, ${\Omega^{\bullet}_{R,I}= \bigoplus\limits_{i\geqslant 1}\big(\Omega^{\bullet}_R\big)_i}$\,. On the other hand, there is an equality
\begin{equation}\label{eq:decompRI2}
0=d(\bar t\,^N)=N\bar t\,^{N-1}d\bar t
\end{equation}
in $\Omega^1_R$. Since $N$ is invertible in $S$ and in $R$, this implies that $\big(\Omega^{\bullet}_{R}\big)_i=0$ for all~$i\geqslant N$.

Furthermore, for each $i\geqslant 0$, the map ${h\colon \big(\Omega^{n}_{S[t]}\big)_{i}\to\big(\Omega^{n-1}_{S[t]}\big)_{i}}$ descends to a map $\bar h\colon{\big(\Omega^n_{R}\big)_i\to \big(\Omega^{n-1}_{R}\big)_i}$. One way to show this is just to check it directly using the description of the kernel of the quotient map ${\big(\Omega^n_{S[t]}\big)_i\to \big(\Omega^n_{R}\big)_i}$ given by Lemma~\ref{lem:generKahler}. A more conceptual way is to observe that $\bar h$ is the contraction with the derivation~$\partial$ on $R$ that acts as multiplication by $i$ on the homogenous component of degree $i$ in $R$ (sometimes $\partial$ is called an Euler vector field).

Clearly, the equality $d\circ h+h\circ d=i$ implies the equality $d\circ \bar h+\bar h\circ d=i$. Alternatively, $d\circ \bar h+\bar h\circ d$ is the Lie derivative defined by the derivation $\partial$, which is known to act as multiplication by $i$ on differential forms of internal degree $i$ in $\Omega^n_R$.

We see that for each $i\geqslant 0$, multiplication by $i$ on the complex $\big(\Omega^{\bullet}_{R}\big)_i$ is homotopically trivial. Using that $(N-1)!$ is invertible in $S$ and formula~\eqref{eq:decompRI}, we conclude that the complex $\Omega^{\bullet}_{R,I}$ is homotopically trivial, whence its cohomology groups vanish.
\end{proof}

\begin{proposition}\label{prop:trivdR}
Suppose that $N!$ is invertible in $S$. Then for all~$m,n\geqslant 0$, we have $H^n_{dR}(R_{N,m},I_{N,m})=0$.
\end{proposition}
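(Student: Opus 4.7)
My plan is to extend the argument of Lemma~\ref{lem:trivdR} from a single grading to a multi-grading. Equip $R_{N,m}$ with its natural $\mathbb{Z}_{\geqslant 0}^m$-grading in which $\bar t_j$ has multi-degree $e_j$; this descends from $S[t_1,\ldots,t_m]$ because the ideal $(t_1,\ldots,t_m)^N$ is multi-homogeneous (being generated by monomials). The grading extends to the de Rham complex $\Omega^{\bullet}_{R_{N,m}}$ by putting $d\bar t_j$ in multi-degree $e_j$; the differential preserves the grading, and the multi-degree-zero summand is exactly $\Omega^{\bullet}_S$. This yields a decomposition
$$
\Omega^{\bullet}_{R_{N,m},I_{N,m}}=\bigoplus_{\alpha\neq 0}(\Omega^{\bullet}_{R_{N,m}})_{\alpha}\,.
$$

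For each $j\in\{1,\ldots,m\}$, the derivation $\partial_j:=\bar t_j\,\partial/\partial\bar t_j$ on $R_{N,m}$ is well-defined (it preserves the multi-graded ideal $(t_1,\ldots,t_m)^N$), and acts on the multi-degree-$\beta$ component as multiplication by $\beta_j$. The same direct computation as in the proof of Lemma~\ref{lem:trivdR} shows that the associated Lie derivative $L_{\partial_j}=d\,\iota_{\partial_j}+\iota_{\partial_j}\,d$ acts on $(\Omega^{\bullet}_{R_{N,m}})_{\alpha}$ as multiplication by $\alpha_j$. Hence, whenever $\alpha_j$ is invertible in $S$, the operator $\iota_{\partial_j}/\alpha_j$ provides a contracting homotopy on $(\Omega^{\bullet}_{R_{N,m}})_{\alpha}$, and this complex is acyclic.

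The crucial point---and the substitute for the internal degree bound used in Lemma~\ref{lem:trivdR}---is the per-coordinate bound $\alpha_j\leqslant N$ for every multi-degree $\alpha$ with $(\Omega^{\bullet}_{R_{N,m}})_{\alpha}\neq 0$. Indeed, $\Omega^n_{R_{N,m}}$ is generated as an $S$-module by forms $\bar t^{\beta}\,d\bar t_{i_1}\wedge\ldots\wedge d\bar t_{i_n}$ with $|\beta|\leqslant N-1$ (otherwise $\bar t^{\beta}=0$ in $R_{N,m}$); such a form has multi-degree $\alpha$ with $\alpha_j=\beta_j+[j\in\{i_1,\ldots,i_n\}]\leqslant N$. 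For any $\alpha\neq 0$ there exists $j$ with $1\leqslant\alpha_j\leqslant N$, so $\alpha_j$ divides $N!$ and is invertible in $S$. Summing the acyclicity of the components $(\Omega^{\bullet}_{R_{N,m}})_{\alpha}$ over $\alpha\neq 0$ completes the proof. The main obstacle in a naive extension of Lemma~\ref{lem:trivdR} is that the total internal degree $|\alpha|$ may reach $N-1+m$, which would require inverting integers larger than $N$; using partial Lie derivatives $L_{\partial_j}$ instead of the total Euler Lie derivative $L_{\partial}$ bypasses this and makes the hypothesis ``$N!$ invertible'' sufficient regardless of~$m$.
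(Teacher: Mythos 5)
Your argument is correct, but it takes a different route from the paper. The paper proves the proposition by induction on $m$: it uses the exact sequence of relative de Rham complexes associated with $(\bar t_m)\subset I_{N,m}$, whose quotient is $\Omega^{\bullet}_{R_{N,m-1},I_{N,m-1}}$, and handles the subcomplex by applying the one-variable Lemma~\ref{lem:trivdR} over the base ring $S'=S[t_1,\ldots,t_{m-1}]$ to the monomial ideal $J=(t_1,\ldots,t_m)^N\subset S'[t_m]$; the long exact sequence of cohomology then closes the induction. You instead argue in one shot, refining the homotopy from Lemma~\ref{lem:trivdR}: you pass to the full $\mathbb{Z}_{\geqslant 0}^m$-multigrading and, on each nonzero multidegree component, contract with a single partial Euler derivation $\bar t_j\,\partial/\partial\bar t_j$ for a coordinate $j$ with $1\leqslant\alpha_j\leqslant N$, so that Cartan's formula yields a contracting homotopy after dividing by $\alpha_j$. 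This is exactly the point raised in the paper's remark after the proposition: the total Euler field would only act by the total degree, which can reach $N+m-2$ and would force inverting more integers, whereas your per-coordinate Lie derivatives show that invertibility of $N!$ suffices for every $m$; your argument also directly yields the more general statement of part (i) of that remark (any monomial ideal containing all $t_i^N$), without induction or exact sequences, while the paper's route has the virtue of reusing the already-proved one-variable lemma verbatim. One small imprecision: since the paper works with absolute differential forms, $\Omega^n_{R_{N,m}}$ is spanned over $S$ by elements $\bar t^{\beta}\,\omega\wedge d\bar t_{i_1}\wedge\ldots\wedge d\bar t_{i_k}$ with $\omega\in\Omega^{n-k}_S$ (of multidegree zero), not just by $\bar t^{\beta}\,d\bar t_{i_1}\wedge\ldots\wedge d\bar t_{i_n}$; this does not affect your bound $\alpha_j\leqslant N$ or anything else in the proof.
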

\begin{proof}
The proof is by induction on $m$. The case $m=0$ is trivial as the corresponding relative module of differential forms just vanishes.

Let us make an induction step from $m-1$ to $m$. We have natural isomorphisms
$$
R_{N,m-1}\simeq R_{N,m}/(\bar t_m)\,,\qquad I_{N,m-1}\simeq I_{N,m}/(\bar t_m)\,.
$$
Hence the embedding of ideals $(\bar t_m)\subset I_{N,m}$ in the ring $R_{N,m}$ gives an exact sequence of relative de Rham complexes
\begin{equation}\label{eq:exactdR}
0\longrightarrow \Omega^{\bullet}_{R_{N,m},(\bar t_m)}\longrightarrow\Omega^{\bullet}_{R_{N,m},I_{N,m}}\longrightarrow \Omega^{\bullet}_{R_{N,m-1},I_{N,m-1}}\longrightarrow 0\,.
\end{equation}
By the induction hypothesis, we have $H^n_{dR}(R_{N,m-1},I_{N,m-1})=0$ for all $n\geqslant 0$.

Put $S'=S[t_1,\ldots,t_{m-1}]$ and
$$
J=(t_1,\ldots,t_m)^N\subset S[t_1,\ldots,t_m]=S'[t_m]\,.
$$
Clearly, we have~${t_m^N\in J}$. Since the ideal $J$ is generated by monomials in~${t_1,\ldots,t_m}$ with coefficients in~$S$, we see that, in particular, $J$ is generated by monomials in $t_m$ with coefficients in $S'$. Thus by Lemma~\ref{lem:trivdR} applied to $J\subset S'[t_m]$, we see that $H^n_{dR}\big(R_{N,m},(\bar t_m)\big)=0$ for all $n\geqslant 0$.

Using the long exact sequence of cohomology associated with the exact sequence~\eqref{eq:exactdR}, we complete the proof.
\end{proof}

As the second part of the following remark claims, given an arbitrary graded split nilpotent extension of $S$ of nilpotency degree $N$, its relative de Rham cohomology vanish if one requires that more natural numbers than just $N!$ are invertible in $S$.

\begin{remark}
\hspace{0cm}
\begin{itemize}
\item[(i)]
Using the same argument as in the proof of Proposition~\ref{prop:trivdR}, one shows the following generalization of both Lemma~\ref{lem:trivdR} and Proposition~\ref{prop:trivdR}. Let $J\subset S[t_1,\ldots,t_m]$ be an ideal such that ${t_1^N,\ldots,t_m^N\in J}$ and $J$ is generated by monomials in $t_1,\ldots,t_m$. Put
$$
R=S[t_1,\ldots,t_m]/J\,,\qquad I=(\bar t_1,\ldots,\bar t_m)\subset R\,.
$$
Suppose that $N!$ is invertible in $S$. Then $H^n_{dR}(R,I)=0$ for all~$n\geqslant 0$.
\item[(ii)]
Let $R$ be a graded \mbox{$S$-algebra} such that $R_0=S$ and the ideal $I=\bigoplus\limits_{i\geqslant 1}R_i$ satisfies the condition~${I^N=0}$. For simplicity, assume that $R$ is generated as an \mbox{$S$-algebra} by~$m$ homogenous elements of degree one. Then the relative de Rham complex~$\Omega^{\bullet}_{R,I}$ may have non-zero homogenous components of internal degree up to $N+m-2$ (the component of degree $N+m-1$ vanishes by equalities analogous to~\eqref{eq:decompRI2}). For example, the relative de Rham complex~${\Omega^{\bullet}_{R_{N,m},I_{N,m}}}$ has a non-zero component of internal degree~${N+m-2}$. Thus, applying the same argument with the action of the Euler field on differential forms by the Lie derivative as in the proof of Lemma~\ref{lem:trivdR}, one shows the vanishing of relative de Rham cohomology under the condition that $(N+m-2)!$ is invertible in $S$.
\end{itemize}
\end{remark}

Note that relative de Rham cohomology can be non-trivial for an arbitrary split nilpotent extension of $S$, even when $S$ is a $\Q$-algebra. Here is a way to construct such examples.

Let $f\in \Q[t_1,\ldots,t_m]$ be a polynomial such that $f$ defines an isolated singularity at the origin or, equivalently, such that the Milnor ring
$$
R=\Q[[t_1,\ldots,t_m]]/(\partial_{t_1}f,\ldots,\partial_{t_m}f)
$$
is finite-dimensional over $\Q$. The {\it Milnor number} of $f$ (at the origin) is defined as
$$
{\mu(f):=\dim_{\Q}(R)}\,.
$$
Equivalently, we have ${\mu(f)=\dim_{\Q}\big(\Omega^m/df\wedge \Omega^{m-1}\big)}$, where, for short, we put
$$
\Omega^1:=\Q[[t_1,\ldots,t_m]]dt_1\oplus\ldots\oplus \Q[[t_1,\ldots,t_m]]dt_m\,,
$$
$$
\Omega^i:=\mbox{$\bigwedge^i_{\Q[[t_1,\ldots,t_m]]}\Omega^1$}\,,\qquad i\geqslant 0\,.
$$
Recall that the {\it Tyurina number} (see~\cite{Tjurina}) of $f$ is defined as
$$
\tau(f):=\dim_{\Q}\big(R/(\bar f)\big)\,,
$$
where $\bar f$ is the image in $R$ of $f$. Equivalently, we have ${\tau(f)=\dim_{\Q}\big(\Omega^m/(f\cdot\Omega^m+df\wedge \Omega^{m-1})\big)}$. Clearly,~${\mu(f)\geqslant \tau(f)}$.

Suppose that there is a strict inequality
\begin{equation}\label{eq:miltyu}
\mu(f)>\tau(f)\,,
\end{equation}
or, equivalently, that $\bar f\ne 0$. Then $\bar f$ gives a non-zero class in $H^0_{dR}(R,I)$, where $I=(\bar t_1,\ldots,\bar t_m)\subset R$. For example, Grauert and Kerner~\cite[\S\,1.3]{GrauertKerner} showed explicitly that this holds for $f=t_1^4+t_1^2t_2^3+t_2^5$. A criterion for $f$ to satisfy inequality~\eqref{eq:miltyu} was obtained by Saito~\cite{Saito}.

Actually, inequality~\eqref{eq:miltyu} implies that another split nilpotent extension of~$\Q$ has non-trivial relative de Rham cohomology in higher degree. Namely, put
$$
R'=\Q[[t_1,\ldots,t_m]]/\big(f,(t_1,\ldots,t_m)^N\big)
$$
for a sufficiently large natural number~$N$. There is an equality (cf. Lemma~\ref{lem:generKahler})
$$
\tau(f)=\dim_{\Q}(\Omega^m_{R'})\,.
$$
It was proved independently by Palamodov~\cite{Palamodov} and Milnor~\cite[Theor.\,7.2]{Milnor68} that $\mu(f)$ equals the dimension of the space of vanishing cycles associated with~$f$. Malgrange~\cite[Th\'eor.\,5.1, Th\'eor.\,3.7]{Malgrange} (see also a nice survey by Mond~\cite{Mond}) gave another proof of this fact and also proved the equality
$$
\mu(f)=\dim_{\Q} \big(\Omega_{R'}^{m-1}/d\,\Omega_{R'}^{m-2}\big)\,.
$$
This was generalized later to complete intersections with isolated singularities by Tr\'ang~\cite{Trang} and Greuel~\cite[Prop.\,5.1]{Greuel}.

Since the de Rham differential ${d\colon \Omega^{m-1}\to \Omega^{m}}$ is surjective, the de Rham differential ${d\colon \Omega^{m-1}_{R'}\to \Omega^{m}_{R'}}$ is surjective as well. Hence, we have an exact sequence
$$
0\longrightarrow H^{m-1}_{dR}(R')\longrightarrow \Omega_{R'}^{m-1}/d\,\Omega_{R'}^{m-2}\stackrel{d}\longrightarrow \Omega^m_{R'}\longrightarrow 0\,.
$$
Thus, the vanishing $H^i_{dR}(\Q)=0$, $i>0$, yields the equality
$$
\mu(f)-\tau(f)=\dim_{\Q}H^{m-1}_{dR}(R',I')\,,
$$
where $I'=(\bar t_1,\ldots,\bar t_m)\subset R'$. Therefore inequality~\eqref{eq:miltyu} implies that there is a non-zero class in $H^{m-1}_{dR}(R',I')$. In particular, such examples were found by Reiffen~\cite[Satz\,4, Satz\,5]{Reiffen67} explicitly for $m=2,3$ and by Arapura and Kang~\cite[Exam.\,4.4]{ArapuraKang} for $m=2$ with the help of the Maple subroutines of Rossi and Teraccini~\cite{RossiTerracini} to calculate Milnor and Tyurina numbers.

\subsection{Finitely freely approximable functors}\label{subsec:ffa}





By $\Ab$ denote the category of abelian groups.


\begin{definition}\label{def:frapp}
A functor $F\colon \SNilp_N(S)\to\Ab$ is {\it finitely freely approximable} if for any object~$(R,I)$ in $\SNilp_N(S)$, the natural homomorphism of groups
$$
\underset{(R',I')\to(R,I)}\colim\, F(R',I')\longrightarrow F(R,I)
$$
is an isomorphism, where the colimit is taken with respect to the category of finite free objects in $\SNilp_N(S)$ over $(R,I)$.
\end{definition}

Recall that the category of finite free objects in $\SNilp_N(S)$ over $(R,I)$ is defined as follows. Objects are arrows $f\colon (R',I')\to (R,I)$ in $\SNilp_N(S)$, where $(R',I')$ is a finite free object in $\SNilp_N(S)$. Morphisms from ${f_1\colon (R'_1,I'_1)\to (R,I)}$ to ${f_2\colon (R'_2,I'_2)\to (R,I)}$ are arrows $\varphi\colon (R_1',I_1')\to (R'_2,I'_2)$ in $\SNilp_N(S)$ such that $f_1=f_2\circ\varphi$.

Note that, in general, the colimit in Definition~\ref{def:frapp} is not filtered. Finitely freely approximable functors are useful because of the following obvious observation.

\begin{lemma}\label{lem:isomapprox}
Let $\rho\colon F\to G$ be a morphism of functors from $\SNilp_N(S)$ to~$\Ab$. Suppose that~$F$ and~$G$ are finitely freely approximable and that for any~${m\geqslant 0}$, the corresponding homomorphism ${F(R_{N,m},I_{N,m})\to G(R_{N,m},I_{N,m})}$ is an isomorphism. Then the morphism~$\rho$ is an isomorphism as well.
\end{lemma}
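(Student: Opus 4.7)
The plan is to directly unwind the definition of finitely freely approximable and use naturality of $\rho$. Fix an object $(R,I)$ in $\SNilp_N(S)$, and let $\mathcal{C}$ denote the category of finite free objects in $\SNilp_N(S)$ over $(R,I)$ (as described just after Definition~\ref{def:frapp}). Since $F$ and $G$ are both finitely freely approximable, the natural maps
$$
\underset{(R',I')\to(R,I)\,\in\,\mathcal{C}}{\colim}\, F(R',I')\longrightarrow F(R,I),\qquad \underset{(R',I')\to(R,I)\,\in\,\mathcal{C}}{\colim}\, G(R',I')\longrightarrow G(R,I)
$$
are isomorphisms.

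The natural transformation $\rho$ gives, for every arrow $(R',I')\to (R,I)$ in $\mathcal{C}$, a homomorphism $\rho_{(R',I')}\colon F(R',I')\to G(R',I')$, and these are compatible with morphisms in $\mathcal{C}$ by naturality of $\rho$. Thus we obtain a morphism between the two colimit diagrams, which fits into a commutative square with $\rho_{(R,I)}\colon F(R,I)\to G(R,I)$ and the two colimit isomorphisms above.

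Now every object of $\mathcal{C}$ has the form $f\colon (R',I')\to (R,I)$ with $(R',I')\simeq (R_{N,m},I_{N,m})$ for some $m\geqslant 0$. By the hypothesis of the lemma and functoriality, $\rho_{(R',I')}$ is an isomorphism for every such $(R',I')$. Since a morphism of diagrams in $\Ab$ that is an isomorphism termwise induces an isomorphism on colimits, the induced map on the colimits is an isomorphism. Combining this with the two colimit isomorphisms yields that $\rho_{(R,I)}$ is an isomorphism, as desired.

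The proof is essentially formal; there is no real obstacle. The only content is the observation that $\mathcal{C}$ is (tautologically) covered by the objects $(R_{N,m},I_{N,m})$ up to isomorphism, so that the assumption on finite free split nilpotent extensions is enough to conclude termwise isomorphism of the two diagrams.
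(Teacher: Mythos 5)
Your argument is correct and is exactly the paper's proof, which simply remarks that an isomorphism between diagrams induces an isomorphism between their colimits; you have just spelled out the termwise identification (every object of the index category is isomorphic to some $(R_{N,m},I_{N,m})$, so $\rho$ is an isomorphism termwise by hypothesis and naturality) and the compatibility square coming from the definition of finitely freely approximable. No gaps; nothing further is needed.
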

\begin{proof}
Indeed, an isomorphism between diagrams induces an isomorphism between their colimits.
\end{proof}

Given a functor $F\colon\SNilp_N(S)\to\Ab$, define the functor~$F^{fa}$ by the formula
$$
F^{fa}\;:\;\SNilp_N(S)\to\Ab\,,\qquad (R,I)\longmapsto\underset{(R',I')\to(R,I)}\colim\, F(R',I')\,,
$$
where the colimit is as in Definition~\ref{def:frapp}. One checks easily that the functor~$F^{fa}$ is finitely freely approximable and that the values of~$F^{fa}$ and $F$ coincide on finite free objects in $\SNilp_N(S)$. The natural morphism $F^{fa}\to F$ is an isomorphism if and only if $F$ is finitely freely approximable.

Given a morphism of functors $\rho\colon F\to G$, we have a morphism between finitely freely approximable functors $\rho^{fa}\colon F^{fa}\to G^{fa}$. Explicitly, $\rho^{fa}$ is the colimit of $\rho$ over finite free objects in $\SNilp_N(S)$. The assignment of $F^{fa}$ to $F$ is the right adjoint functor to the forgetful functor from the category of finitely freely approximable functors to the category of all functors from~$\SNilp_N(S)$ to~$\Ab$.

\medskip

The following results allow to construct finitely freely approximable functors.

\begin{lemma}\label{lem:elappr2}
Let $\rho\colon F\to G$ be a morphism of finitely freely approximable functors from $\SNilp_N(S)$ to~$\Ab$. Then the cokernel~$\Coker(\rho)$ is a finitely freely approximable functor as well.
\end{lemma}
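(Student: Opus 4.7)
The plan is to exploit the fact that cokernels commute with arbitrary colimits in the category $\Ab$ of abelian groups, combined with the definition of finite free approximability of $F$ and $G$.

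Explicitly, fix an object $(R,I)$ in $\SNilp_N(S)$ and write $\mathcal{C}$ for the (non-filtered) category of finite free objects $(R',I')\to (R,I)$ over $(R,I)$ introduced in Definition~\ref{def:frapp}. The value $\Coker(\rho)(R',I')$ is by definition the cokernel of the map $\rho_{R',I'}\colon F(R',I')\to G(R',I')$, and the formation of cokernels in $\Ab$ is itself a colimit, hence it commutes with the colimit over $\mathcal{C}$. This gives a canonical identification
$$
\underset{(R',I')\in \mathcal{C}}{\colim}\, \Coker(\rho)(R',I')\;\simeq\;\Coker\Bigl(\,\underset{(R',I')\in \mathcal{C}}{\colim}\, F(R',I')\;\longrightarrow\;\underset{(R',I')\in \mathcal{C}}{\colim}\, G(R',I')\Bigr),
$$
where the morphism on the right is induced by the family of maps $\rho_{R',I'}$.

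Now I would invoke the hypothesis that both $F$ and $G$ are finitely freely approximable: this says precisely that the two inner colimits coincide, respectively, with $F(R,I)$ and $G(R,I)$, and the naturality of $\rho$ implies that under these identifications the induced map is nothing but $\rho_{R,I}$. Hence the right-hand side becomes $\Coker(\rho_{R,I})=\Coker(\rho)(R,I)$, so the canonical comparison map
$$
\underset{(R',I')\in\mathcal{C}}{\colim}\,\Coker(\rho)(R',I')\;\longrightarrow\;\Coker(\rho)(R,I)
$$
is an isomorphism, which is exactly what it means for $\Coker(\rho)$ to be finitely freely approximable. There is no real obstacle here; the only point deserving a moment's care is checking that the map between the two colimits really is $\rho_{R,I}$ after identification, which is immediate from the naturality of the transition morphisms of the colimit diagrams and of $\rho$.
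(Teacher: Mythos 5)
Your argument is correct and is essentially the paper's own proof, merely written out in detail: the paper's one-line justification that "taking (not necessarily filtered) colimits of abelian groups is right exact" is exactly your observation that cokernels, being colimits, commute with the colimit over the category of finite free objects, after which finite free approximability of $F$ and $G$ identifies everything with $\Coker(\rho_{R,I})$.
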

\begin{proof}
Indeed, taking (not necessarily filtered) colimits of abelian groups is right exact.
\end{proof}

\begin{proposition}\label{prop:elappr1}
Let $H$ be a functor from the category of rings to $\Ab$. Given an ideal~$I$ in a ring $R$, we put
$$
F_H(R,I):=\Ker\big(H(R)\to H(R/I)\big)\,.
$$
Suppose that the functor $H$ satisfies the following conditions:
\begin{itemize}
\item[(i)]
for any ring $R$ with a nilpotent ideal $I\subset R$, the corresponding homomorphism ${H(R)\to H(R/I)}$ is surjective;
\item[(ii)]
for any ring $R$ with a nilpotent ideal $I\subset R$, the group $F_H(R,I)$ is generated by the images of group homomorphisms
$$
F_H\big(R[[t]],(t)\big)\longrightarrow F_H(R,I)
$$
induced by homomorphisms of $R$-algebras $R[[t]]\to R$ that send $t$ to an element in $I$;
\item[(iii)]
for any ring $R$, the natural homomorphism ${\underset{A\subset R}\colim\, H(A)\longrightarrow H(R)}$
is an isomorphism, where the colimit is taken over finitely generated subrings~${A\subset R}$.
\end{itemize}
Then the functor
$$
F_H\;:\;\SNilp_N(S)\longrightarrow\Ab\,,\qquad (R,I)\longmapsto F_H(R,I)\,,
$$
is finitely freely approximable.
\end{proposition}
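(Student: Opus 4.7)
The plan is to prove both surjectivity and injectivity of the natural map $\theta\colon F_H^{fa}(R,I) \to F_H(R,I)$ for each $(R,I) \in \SNilp_N(S)$; surjectivity admits an explicit construction, while injectivity is the main technical obstacle.

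For surjectivity I would start from $\alpha \in F_H(R,I)$ and apply (ii) to write $\alpha = \sum_j \pm \varphi_{j,*}(\beta_j)$ with $\beta_j \in F_H(R[[t]],(t))$ and $\varphi_j\colon R[[t]] \to R$ sending $t$ to $x_j \in I$. Since $x_j^N=0$, each $\varphi_j$ factors through $R[t]/(t^N)$, so I replace $\beta_j$ by its image $\beta_j' \in F_H(R[t]/(t^N),(\bar t))$. By (iii), $\beta_j'$ lifts to some $\tilde\beta_j' \in H(A_j')$ for a finitely generated subring $A_j' \subset R[t]/(t^N)$ containing $\bar t$. Using (iii) for $R$ and the vanishing of $\beta_j'$ in $H(R)$, one can enlarge $A_j'$ (by adjoining elements from the embedded copy $R \hookrightarrow R[t]/(t^N)$) so that the image of $\tilde\beta_j'$ already vanishes in $H(B)$, where $B$ is a suitable finitely generated subring of $R$ receiving $A_j'/\bar t A_j'$. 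Writing $B = T[y_1,\ldots,y_q]$ with $T \subset S$ finitely generated and $y_1 = x_j$, $y_2,\ldots,y_q \in I$, I introduce the auxiliary ring $F := T[Y_0,\ldots,Y_q]/(Y_0,\ldots,Y_q)^N$ together with the surjection $\Phi\colon F \twoheadrightarrow B[\bar t]/(\bar t^N)$, $Y_0 \mapsto \bar t$, $Y_k \mapsto y_k$; its kernel is contained in the nilpotent ideal $(Y_0,\ldots,Y_q)$ and hence is nilpotent, so by (i) the element $\tilde\beta_j'$ lifts to some $\hat\beta_j \in H(F)$. Extending scalars $T \hookrightarrow S$ embeds $F$ inside the finite free $R_{N,q+1}$, and a direct diagram chase (using that the composition $R_{N,q+1} \to R \to R/I$ coincides with the structural quotient of $R_{N,q+1}$) shows that the image $\check\beta_j$ of $\hat\beta_j$ in $H(R_{N,q+1})$ actually lies in $F_H(R_{N,q+1},I_{N,q+1})$ and satisfies $\Psi_{j,*}(\check\beta_j) = \varphi_{j,*}(\beta_j)$ for the obvious $S$-algebra map $\Psi_j\colon R_{N,q+1} \to R$ sending $Y_0 \mapsto x_j$, $Y_k \mapsto y_k$. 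Concatenating variables over $j$ then produces a single finite free $R_{N,M}$ with a lift of $\alpha$.

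For injectivity, given $\alpha_1 \in F_H(R_{N,m_1},I_{N,m_1})$ with $f_{1,*}(\alpha_1)=0$, I aim to produce a morphism $\phi\colon R_{N,m_1} \to R_{N,m_2}$ over $(R,I)$ with $\phi_*(\alpha_1)=0$ in $F_H(R_{N,m_2},I_{N,m_2})$, which would force $[(\alpha_1,f_1)]$ to vanish in the colimit. Mirroring the surjectivity setup, I would use (iii) and the vanishing of $\alpha_1$ in $H(R)$ to construct $F \subset R_{N,q}$ with $\Phi\colon F \twoheadrightarrow D$ of nilpotent kernel $K$ and a morphism $\phi$ lifting $f_1$ through $\Psi\colon R_{N,q} \to R$. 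Applying the splittings of $F$ and $D$ as nilpotent extensions of a finitely generated $T \subset S$, together with the general snake-lemma exact sequence $0 \to F_H(F,K) \to F_H(F,(Y)) \to F_H(D,(y)) \to 0$ derived from (i), one sees that the lift of $\alpha_1$ in $H(F)$ lies in the subgroup $F_H(F,K)$, and functoriality sends it to $F_H(R_{N,q},K\cdot R_{N,q})$. The main difficulty is that this last subgroup is in general nonzero, so $\phi_*(\alpha_1)$ need not vanish after a single step. The plan is then to iterate: apply (ii) to the smaller relative group $F_H(R_{N,q},K\cdot R_{N,q})$ and express $\phi_*(\alpha_1)$ as a sum of images $\psi_{l,*}(\epsilon_l)$ with $s \mapsto w_l \in K\cdot R_{N,q}$, then construct a further morphism $\phi'\colon R_{N,q} \to R_{N,q'}$ over $(R,I)$ chosen so that $\phi'(w_l)=0$ in $I_{N,q'}$ for every $l$, giving $(\phi'\circ\phi)_*(\alpha_1)=0$. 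Verifying the existence of such a $\phi'$ by matching the relations in $K$ against freely chosen lifts of the variables in $I_{N,q'}$, using the finite generation of $K$ coming from (iii) and the split structure of $R$ over $S$, constitutes the principal technical difficulty of the whole proof.
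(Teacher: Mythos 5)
Your surjectivity half is close to workable but contains a concrete error, and your injectivity half is missing the key idea rather than merely the details. For surjectivity: the map $\Phi\colon T[Y_0,\ldots,Y_q]/(Y_0,\ldots,Y_q)^N\twoheadrightarrow B[\bar t\,]/(\bar t^{\,N})$, $Y_0\mapsto\bar t$, $Y_k\mapsto y_k$, is not well defined. The element $Y_1^{N-1}Y_0$ lies in $(Y_0,\ldots,Y_q)^N$, but its image $y_1^{N-1}\bar t$ need not vanish in $B[\bar t\,]/(\bar t^{\,N})$, since $y_1\in I$ only gives $y_1^N=0$ (take $R=S[u]/(u^N)$, $y_1=\bar u$, $N\geqslant 2$). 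To repair this you must quotient the target by the $N$-th power of the \emph{whole} augmentation ideal $\big((B\cap I)+(\bar t\,)\big)$, not just by $(\bar t^{\,N})$; the evaluation $\bar t\mapsto x_j$ still factors through that quotient because $I^N=0$ in $R$. This is exactly the auxiliary ring $R'[[t]]/(I',t)^N$ that the paper's proof introduces, so the step is fixable, but as written it fails.

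The real gap is injectivity, where you yourself flag the decisive step as unresolved: you have no argument that a morphism $\phi'$ over $(R,I)$ killing the elements $w_l$ exists, and in general it does not — the $w_l$ are only known to die in $R$, not in any finite free object, so the proposed iteration has no reason to terminate. The missing idea, which the paper uses, is not to kill kernel elements by a further map at all, but to exploit the colimit relations directly: for a surjection $f\colon(R',I')\twoheadrightarrow(R,I)$ with $(R',I')$ finite free and $J=\Ker(f)$, the object $(R'',I'')$ with $R''=R'[[t]]/(I',t)^N$ (again finite free) admits \emph{two} morphisms to $(R',I')$ over $(R,I)$, namely $\bar t\mapsto 0$ and $\bar t\mapsto x$ for any fixed $x\in J$; both are legitimate morphisms in the index category precisely because $x\in\Ker(f)$. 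Since the image $\Delta$ of $F_H\big(R'[[t]],(t)\big)$ in $H(R'')$ dies under $\bar t\mapsto 0$, the colimit identifies its image under $\bar t\mapsto x$ with zero, and condition (ii) applied to $J\subset R'$ says that these images, as $x$ ranges over $J$, generate $F_H(R',J)=\Ker\big(H(R')\to H(R)\big)$. That is exactly the statement that the kernel vanishes in the colimit, i.e.\ injectivity. (The paper also organizes the whole argument for the functor $(R,I)\mapsto H(R)$, restricted to surjections from finite free objects onto finitely generated $S$-subalgebras via condition (iii), and recovers $F_H$ at the end as a functorial direct summand $H(R)\simeq F_H(R,I)\oplus H(S)$ — a packaging that would also simplify your surjectivity bookkeeping.) Without this two-evaluation trick, or a substitute for it, your proposal does not prove injectivity.
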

\begin{proof}
Let $(R,I)$ in $\SNilp_N(S)$ be such that $R$ is a finitely generated $S$-algebra. Put
\begin{equation}\label{eq:secondcolim}
\Gamma=\underset{(R',I')\twoheadrightarrow(R,I)}\colim\, H(R')\,,
\end{equation}
where the colimit is taken with respect to the following category $\Cc(R,I)$. Objects in~$\Cc(R,I)$ are arrows $f\colon (R',I')\twoheadrightarrow (R,I)$ in $\SNilp_N(S)$ such that $(R',I')$ is a finite free object in $\SNilp_N(S)$ and $f$ is surjective. Morphisms in $\Cc(R,I)$ from ${f_1\colon (R'_1,I'_1)\twoheadrightarrow (R,I)}$ to ${f_2\colon (R'_2,I'_2)\twoheadrightarrow (R,I)}$ are arrows $\varphi\colon (R_1',I_1')\to (R'_2,I'_2)$ in $\SNilp_N(S)$ such that $f_1=f_2\circ\varphi$. Let us prove that the natural homomorphism of groups
$$
\xi\;:\; \Gamma\to H(R)
$$
is an isomorphism.

Let $f\colon (R',I')\twoheadrightarrow (R,I)$ be an object in $\Cc(R,I)$. Put~${J=\Ker(f)\subset R'}$. The induced map $f\colon H(R')\to H(R)$ is equal to the composition
\begin{equation}\label{eq:compos}
H(R')\longrightarrow \Gamma\stackrel{\xi}\longrightarrow H(R)\,,
\end{equation}
where the first arrow is the natural morphism to the colimit. By definition, the homomorphism of rings $f\colon R'\twoheadrightarrow R$ is surjective. Since $f$ is a homomorphism of augmented $S$-algebras, the ideal $J$ is contained in the augmentation ideal~$I'$, whence $J$ is nilpotent. Consequently, by condition~(i) applied to $J\subset R'$, the map $f\colon H(R')\to H(R)$ is surjective. Taking into account composition~\eqref{eq:compos}, we obtain that $\xi$ is surjective as well.

Now we prove injectivity of $\xi$. One shows directly that $\Gamma$ coincides with the union of the images ${\rm Im}\big(H(R')\to \Gamma\big)$ over all objects $f\colon(R',I')\twoheadrightarrow (R,I)$ in~$\Cc(R,I)$. Thus to prove that $\xi$ is injective, it is enough to show the equality
$$
\Ker\big(H(R')\to \Gamma\big)=F_H(R',J)
$$
for any such object, where, as above, $J=\Ker(f)$. Composition~\eqref{eq:compos} yields an embedding ${\Ker\big(H(R')\to \Gamma\big)\subset F_H(R',J)}$. Let us show that there is also the converse embedding.

Consider an auxiliary object $(R'',I'')$ in $\Cc(R,I)$ defined as follows:
$$
R''=R'[[t]]/(I',t)^N\,,\qquad I''=(I',\bar t\,)\,,
$$
and the morphism to $(R,I)$ is the composition
$$
(R'',I'')\stackrel{g}\longrightarrow (R',I')\stackrel{f}\longrightarrow (R,I)\,,
$$
where $g$ is identical on $R'$ and sends $\bar t$ to zero. In particular, $g$ is a morphism in the category $\Cc(R,I)$. Let a subgroup $\Delta\subset H(R'')$ be the image of the map
$$
F_H\big(R'[[t]],(t)\big)\longrightarrow H(R'')
$$
induced by the quotient map $R'[[t]]\to R''$. Since the composition
$$
R'[[t]]\longrightarrow R''\stackrel{g}\longrightarrow R'
$$
is identical on $R'$ and sends $t$ to zero, we see that $g(\Delta)=0$ in $H(R')$.

Fix an element $x\in J$ and define another morphism $h\colon (R'',I'')\to (R',I')$ in~$\Cc(R,I)$ that is identical on $R'$ and sends $\bar t$ to $x$. Since $f,g$ are morphisms in $\Cc(R,I)$ and $g(\Delta)=0$, it follows from the definition of the colimit that the subgroup~${h(\Delta)\subset H(R')}$ is contained in $\Ker\big(H(R')\to \Gamma\big)$. Condition~(ii) applied to $J\subset R'$ implies that ${F_H(R',J)\subset \Ker\big(H(R')\to \Gamma\big)}$. As explained above, this finally proves that $\xi$ is an isomorphism.

Now let $(R,I)$ be an aritrary object in $\SNilp_N(S)$. There is a canonical isomorphism
$$
\underset{(R',I')\to(R,I)}\colim\,H(R')\stackrel{\sim}\longrightarrow
\underset{A\subset R}\colim\,\Big(\,\underset{(R',I')\twoheadrightarrow(A,A\cap I)}\colim\, H(R')\Big)\,,
$$
where the colimit in the left hand side is as in Definition~\ref{def:frapp}, $A$ runs over all finitely generated \mbox{$S$-subalgebras} in~$R$, and for each such $A$, we consider in the right hand side the colimit over the category~$\Cc(A,A\cap I)$. Indeed, for each morphism $(R',I')\to (R,I)$ from a finite free object $(R',I')$ in $\SNilp_N(S)$, one defines $A$ as the image of $R'$ in $R$.

By what was shown above, there is an isomorphism
$$
\underset{A\subset R}\colim\,\Big(\,\underset{(R',I')\twoheadrightarrow(A,A\cap I)}\colim\, H(R')\Big)\stackrel{\sim}\longrightarrow \underset{A\subset R}\colim\, H(A)\,,
$$
where the colimit in the right hand side is taken over all finitely generated \mbox{$S$-sub\-al\-ge\-bras}~${A\subset R}$.

Note that any finitely generated subring in $R$ is contained in a finitely generated $S$-algebra in~$R$ and any finitely generated $S$-subalgebra $A\subset R$ is a union of finitely generated subrings in $A$. Hence condition~(iii) applied to $R$ and to all finitely generated $S$-subalgebras in~$R$ implies that there is an isomorphism
$$
\underset{A\subset R}\colim\, H(A)\stackrel{\sim}\longrightarrow H(R)\,.
$$
Altogether this proves that the functor
\begin{equation}\label{eq:auxfunct}
\SNilp_N(S)\longrightarrow \Ab\,,\qquad (R,I)\longmapsto H(R)\,,
\end{equation}
is finitely freely approximable.

For any $(R,I)$ in $\SNilp_N(S)$, the $S$-algebra structure on $R$ defines an isomorphism ${H(R)\simeq F_H(R,I)\oplus H(S)}$, which is functorial with respect to $(R,I)$. Thus the functor~$F_H$ is finitely freely approximable, being a direct summand of the finitely freely approximable functor defined in formula~\eqref{eq:auxfunct}.
\end{proof}

\subsection{Construction of the Bloch map}\label{subsec:constrBloch}

As an application of Proposition~\ref{prop:elappr1}, we show that relative Milnor~\mbox{$K$-groups} (see Definition~\ref{def:relmil}) and relative modules of differential forms (see Definition~\ref{def:relforms}) define finitely freely approximable functors.

\begin{proposition}\label{prop:elaprKOmgega}
For any $n\geqslant 0$, the functors
$$
K_n^M\;:\;\SNilp_N(S)\longrightarrow\Ab\,,\qquad (R,I)\longmapsto K^{M}_{n}(R,I)\,,
$$
$$
\Omega^n\;:\;\SNilp_N(S)\longrightarrow\Ab\,,\qquad (R,I)\longmapsto \Omega^{n}_{R,I}\,,
$$
are finitely freely approximable.
\end{proposition}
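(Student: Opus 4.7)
The plan is to apply Proposition~\ref{prop:elappr1} with $H = K_n^M$ in one case and $H = \Omega^n$ in the other, both regarded as functors from the category of (commutative) rings to $\Ab$. Under this identification, Definitions~\ref{def:relmil} and~\ref{def:relforms} give tautologically $F_{K_n^M}(R,I) = K_n^M(R,I)$ and $F_{\Omega^n}(R,I) = \Omega^n_{R,I}$, so the proposition will yield exactly the two statements we want. It therefore suffices to check the three hypotheses (i)--(iii) of Proposition~\ref{prop:elappr1} for each of these absolute functors.

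Condition~(i), the surjectivity of $H(R)\to H(R/I)$ for nilpotent $I$, is, for $H=K_n^M$, the statement recalled immediately before Definition~\ref{def:relmil}, and, for $H=\Omega^n$, the unconditional surjectivity of $\Omega^n_R\to\Omega^n_{R/I}$. Condition~(ii) is exactly what Lemmas~\ref{lem:generMilnorK} and~\ref{lem:generKahler} are designed to supply: each generator $\{r_1,\ldots,r_i,1+x,r_{i+1},\ldots,r_{n-1}\}$ of $K_n^M(R,I)$ is the image, under the $R$-algebra homomorphism $R[[t]]\to R$ sending $t\mapsto x$, of the symbol $\{r_1,\ldots,r_i,1+t,r_{i+1},\ldots,r_{n-1}\}$, which lies in $K_n^M(R[[t]],(t))$ because setting $t=0$ produces a symbol containing the entry $1$; the form generators of $\Omega^n_{R,I}$ provided by Lemma~\ref{lem:generKahler} admit analogous lifts $t\,dr_1\wedge\ldots\wedge dr_n$ and $r_1\,dt\wedge dr_2\wedge\ldots\wedge dr_n$ in $\Omega^n_{R[[t]],(t)}$. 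Condition~(iii) is the classical commutation of Milnor $K$-groups and of K\"ahler differentials with filtered colimits of rings, which follows in both cases from the finitary nature of the generators and relations (invertibility of a single element together with the Steinberg relations on the $K$-theory side; linearity and Leibniz on the differential forms side).

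No step presents a genuine obstacle: the verification reduces to assembling already established facts, and the only small check is that the substitution $t\mapsto x$ defines an honest ring homomorphism $R[[t]]\to R$, which is automatic since $x$ is nilpotent, so every formal power series collapses to a finite sum upon evaluation at $x$. Having verified (i)--(iii), Proposition~\ref{prop:elappr1} applied separately to $K_n^M$ and to $\Omega^n$ completes the proof.
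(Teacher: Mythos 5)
Your proposal is correct and follows exactly the paper's own route: verify conditions (i)--(iii) of Proposition~\ref{prop:elappr1} for the absolute functors $K_n^M$ and $\Omega^n$, with (ii) supplied by Lemma~\ref{lem:generMilnorK} and Lemma~\ref{lem:generKahler} via the substitution homomorphisms $R[[t]]\to R$, $t\mapsto x$, and (iii) by the finitary presentation of both invariants. Your write-up merely makes explicit the lifts of the generators that the paper leaves implicit.
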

\begin{proof}
Condition~(i) of Proposition~\ref{prop:elappr1} is satisfied for these functors trivially. Condition~(ii) of Proposition~\ref{prop:elappr1} is satisfied by Lemma~\ref{lem:generMilnorK} and Lemma~\ref{lem:generKahler}. Condition~(iii) of Proposition~\ref{prop:elappr1} is satisfied for Milnor $K$-groups and modules of differential forms, because they are given by finitely defined generators and relations. We finish the proof applying Proposition~\ref{prop:elappr1}.
\end{proof}

\begin{corollary}\label{cor:nonapprox}
\hspace{0cm}
\begin{itemize}
\item[(i)]
For any $n\geqslant 0$, the functor
$$
\SNilp_S\longrightarrow\Ab\,,\qquad (R,I)\longmapsto \Omega_{R,I}^n/d\,\Omega^{n-1}_{R,I}\,,
$$
is finitely freely approximable.
\item[(ii)]
Suppose that $N!$ is invertible in $S$. Then for any object $(R,I)$ in~$\SNilp_N(S)$, we have an isomorphism
$$
(d)^{fa}\;:\;\big(\Omega_{R,I}^n/d\,\Omega^{n-1}_{R,I}\big)^{fa}\simeq \Omega_{R,I}^n/d\,\Omega^{n-1}_{R,I}\stackrel{\sim}\longrightarrow \big((\Omega^{n+1}_{R,I})^{cl}\big)^{fa}\,.
$$
\end{itemize}
\end{corollary}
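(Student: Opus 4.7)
The plan is to deduce part~(i) by realizing $\Omega^n_{R,I}/d\,\Omega^{n-1}_{R,I}$ as a cokernel of a morphism between finitely freely approximable functors, and then to obtain part~(ii) by reducing to the finite free case via Lemma~\ref{lem:isomapprox} and invoking the vanishing of relative de Rham cohomology from Proposition~\ref{prop:trivdR}.

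For part~(i), I observe that the functor $(R,I) \mapsto \Omega^n_{R,I}/d\,\Omega^{n-1}_{R,I}$ is the cokernel on $\SNilp_N(S)$ of the morphism of functors $d \colon \Omega^{n-1} \to \Omega^n$ induced by the de Rham differential. By Proposition~\ref{prop:elaprKOmgega}, both source and target are finitely freely approximable, so Lemma~\ref{lem:elappr2} immediately gives that the cokernel is finitely freely approximable as well.

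For part~(ii), the first isomorphism $(\Omega^n_{R,I}/d\,\Omega^{n-1}_{R,I})^{fa} \simeq \Omega^n_{R,I}/d\,\Omega^{n-1}_{R,I}$ follows from part~(i) and the criterion recalled just after Definition~\ref{def:frapp} that $F^{fa} \to F$ is an isomorphism precisely when $F$ is finitely freely approximable. For the second isomorphism, I would view the de Rham differential as a morphism of functors $d \colon \Omega^n/d\,\Omega^{n-1} \to (\Omega^{n+1})^{cl}$ and apply $(-)^{fa}$ to obtain a morphism $(d)^{fa}$ between finitely freely approximable functors. By Lemma~\ref{lem:isomapprox}, it suffices to verify that $(d)^{fa}$ is an isomorphism on each finite free object $(R_{N,m}, I_{N,m})$. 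Since $fa$-approximations coincide with the original functors on such objects (the identity being terminal in the indexing category of finite free objects over $(R_{N,m}, I_{N,m})$), this amounts to showing that the usual map $\Omega^n_{R_{N,m}, I_{N,m}}/d\,\Omega^{n-1}_{R_{N,m}, I_{N,m}} \to (\Omega^{n+1}_{R_{N,m}, I_{N,m}})^{cl}$ induced by $d$ is an isomorphism, which in turn amounts to the vanishing of both $H^n_{dR}(R_{N,m}, I_{N,m})$ (the kernel) and $H^{n+1}_{dR}(R_{N,m}, I_{N,m})$ (the cokernel). Both vanish by Proposition~\ref{prop:trivdR} using that $N!$ is invertible in $S$. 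The only non-formal input is thus Proposition~\ref{prop:trivdR}; everything else is pure bookkeeping inside the finitely freely approximable framework, so there is no serious obstacle once that proposition is in hand.
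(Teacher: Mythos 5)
Your proposal is correct and follows essentially the same route as the paper: part~(i) is exactly the combination of Proposition~\ref{prop:elaprKOmgega} and Lemma~\ref{lem:elappr2} applied to the cokernel of $d\colon\Omega^{n-1}\to\Omega^{n}$, and part~(ii) is the same reduction via Lemma~\ref{lem:isomapprox} to the finite free objects $(R_{N,m},I_{N,m})$, where the isomorphism $d\colon\Omega^{n}_{R_{N,m},I_{N,m}}/d\,\Omega^{n-1}_{R_{N,m},I_{N,m}}\stackrel{\sim}{\to}(\Omega^{n+1}_{R_{N,m},I_{N,m}})^{cl}$ comes from Proposition~\ref{prop:trivdR}. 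Your explicit remark that both $H^{n}_{dR}$ and $H^{n+1}_{dR}$ of $(R_{N,m},I_{N,m})$ must vanish (for injectivity and surjectivity, respectively) is a slightly more careful bookkeeping than the paper's wording, but both vanishings are supplied by Proposition~\ref{prop:trivdR}, so there is no difference in substance.
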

\begin{proof}
(i) This follows directly from Proposition~\ref{prop:elaprKOmgega} and Lemma~\ref{lem:elappr2}.

(ii) By part~(i), we have a homomorphism of groups
$$
(d)^{fa}\;:\;\big(\Omega_{R,I}^n/d\,\Omega^{n-1}_{R,I}\big)^{fa}\simeq \Omega_{R,I}^n/d\,\Omega^{n-1}_{R,I}\longrightarrow \big((\Omega^{n+1}_{R,I})^{cl}\big)^{fa}\,.
$$
We need to show that this is an isomorphism. By Lemma~\ref{lem:isomapprox}, it is enough to show this for $(R_{N,m},I_{N,m})$, $m\geqslant 0$. In this case, Proposition~\ref{prop:trivdR} claims that $H^{n}_{dR}(R_{N,m},I_{N,m})=0$, whence there is an isomorphism
\begin{equation}\label{eq:diffisom}
d\;:\;\Omega^n_{R_{N,m},I_{N,m}}/d\,\Omega^{n-1}_{R_{N,m},I_{N,m}}\stackrel{\sim}\longrightarrow (\Omega^{n+1}_{R_{N,m},I_{N,m}})^{cl}\,,
\end{equation}
which finishes the proof.
\end{proof}

In particular, it follows from Corollary~\ref{cor:nonapprox}(ii) that the functor $(\Omega^n_{R,I})^{cl}$ is not finitely freely approximable, because there are split nilpotent extensions with non-trivial relative de Rham cohomology (see the discussion at the end of Subsection~\ref{subsec:reldR}).

\medskip

Now we are ready to construct the Bloch map (see Definition~\ref{def:Blochmap}).

\begin{proof}[Proof of Theorem~\ref{the:Bloch}]
Let $R$, $I$, $N$, and $n$ be as in the theorem. Put~${S=R/I}$. Then~$(R,I)$ is a split nilpotent extension of $S$ of nilpotency degree~$N$ and~$N!$ is invertible in~$S$.
By Proposition~\ref{prop:elaprKOmgega} and Corollary~\ref{cor:nonapprox}, we obtain a homomorphism of groups
$$
(d\log)^{fa}\;:\;K_{n+1}^M(R,I)^{fa}\simeq K_{n+1}^M(R,I)\longrightarrow \big((\Omega^n_{R,I})^{cl}\big)^{fa}\simeq \Omega_{R,I}^n/d\,\Omega^{n-1}_{R,I}\,.
$$
Let us check that $(d\log)^{fa}$ satisfies the condition of Definition~\ref{def:Blochmap}, that is, that formula~\eqref{eq:defBloch} therein holds. Clearly, it is enough to show this for $(R_{N,m},I_{N,m})$, where $m\geqslant 0$. In this case, this is implied by the isomorphism~\eqref{eq:diffisom} and formula~\eqref{eq:Blochmap} from the proof of Lemma~\ref{lem:exact}. Thus~$(d\log)^{fa}$ is the Bloch map.
\end{proof}

In other words, we integrate $d\log$ by taking a (co)limit, in the spirit of calculus.

\begin{remark}
Let $S$ be a ring such that $N!$ is invertible in $S$. Our proof of Theorem~\ref{the:Bloch} implies that the restriction of the Bloch map to objects in $\SNilp_N(S)$ is the unique collection of homomorphisms ${K^M_{n+1}(R,I)\to \Omega^n_{R,I}/d\,\Omega^{n-1}_{R,I}}$ that are functorial with respect to $(R,I)$ in $\SNilp_N(S)$ and such that their composition with $d$ is $d\log$.
\end{remark}

It is important to mention that one can prove Theorem~\ref{the:Bloch} more directly, avoiding colimits. Nevertheless, the approach with finitely freely approximable functors will allow us further to prove also Theorem~\ref{thm:main}. Still let us sketch an explicit proof of Theorem~\ref{the:Bloch}.

As above, put $S=R/I$. Also, define a group
$$
{L_{n+1}=\Ker\big((R^*)^{\otimes (n+1)}\to (S^*)^{\otimes (n+1)}\big)}\,.
$$
First, one shows that formula~\eqref{eq:defBloch} from Definition~\ref{def:Blochmap} gives a homomorphism~${\widetilde{\rm B}\colon L_{n+1}\to\Omega^n_{R,I}/d\,\Omega^{n-1}_{R,I}}$ (actually, this holds in a non-split case as well). For this one uses that
$$
\log(1+x)\frac{d(1+y)}{1+y}+\log(1+y)\frac{d(1+x)}{1+x}=d\big(\log(1+x)\log(1+y)\big)\in d\,I
$$
for all elements $x,y\in I$ (cf. the end of the proof of Lemma~\ref{lem:generMilnorK}).

Then one needs to show that the homomorphism $\widetilde{\rm B}$ vanishes on the intersection~${L_{n+1}\cap {\rm St}_{n+1}(R)\subset (R^*)^{\otimes (n+1)}}$. Since $S$ splits out of $R$, one obtains an explicit system of generators of this intersection by projecting the Steinberg relations from $(R^*)^{\otimes (n+1)}$ to $L_{n+1}$ with respect to the decomposition 
$$
(R^*)^{\otimes (n+1)}\simeq L_{n+1}\oplus (S^*)^{\otimes (n+1)}\,.
$$
The vanishing of $\widetilde{\rm B}$ on these generators is reduced to the case $R=R_{N,m}$, when it holds, because the Bloch map exists in this case by Remark~\ref{rmk:Blochmap} and Proposition~\ref{prop:trivdR}.

As an example, consider the case $n=1$. Then the intersection ${L_2\cap {\rm St}_{2}(R)\subset (R^*)^{\otimes 2}}$ is generated by elements of type
$$
\zeta(a,x)=(a+x)\otimes (1-a-x)-a\otimes(1-a)\in (R^*)^{\otimes 2}\,,
$$
where $a,1-a\in S^*$ and $x\in I$. Clearly, the element $\zeta(a,x)$ is the image of the analogous element ${\zeta(a,\bar t\,)\in \big(R_{N,1}^*\big)^{\otimes 2}}$ under the map induced by the homomorphism of \mbox{$S$-algebras} ${R_{N,1}\to R}$ that sends $\bar t$ to $x$. The map $\widetilde{\rm B}$ vanishes on~$\zeta(a,\bar t\,)$, because the Bloch map~$\rm B$ exists for $R_{N,1}$ as its relative de Rham cohomology groups vanish by Lemma~\ref{lem:trivdR}.

\section{Milnor $K$-group of the ring of formal power series}\label{sect:powerseries}

Let $S$ be a ring such that $2$ is invertible in it and $S$ is weakly $5$-fold stable (see Definition~\ref{def:weakst}). In this section, we consider the Milnor $K$-group~${K_2^M\big(S[[t]]\big)}$. The main results are Proposition~\ref{prop:vanish} an Corollary~\ref{cor:vanish}.

\subsection{Filtrations on the Milnor $K$-group}\label{subsec:filtrMilnor}

One has a decreasing filtration on $S[[t]]^*$ given by the formula
$$
U_0:=S[[t]]^*=K_1^M\big(S[[t]]\big)\,,\qquad U_p:=1+t^pS[[t]]=K_1^M\big(S[[t]],(t^p)\big)\,,\qquad p\geqslant 1\,.
$$
This induces naturally a filtration on Milnor $K$-groups of $S[[t]]$. Namely, let the subgroup
\begin{equation}\label{eq:Vn}
V_p\subset K_2^M\big(S[[t]]\big)\,,\qquad p\geqslant 0\,,
\end{equation}
be generated by symbols $\{f,g\}$, where $f\in U_i$, $g\in U_j$ for $i,j\geqslant 0$ such that~${i+j\geqslant p}$. Also, put
\begin{equation}\label{eq:Wn}
W_0:=K_2^M\big(S[[t]]\big)\,,\qquad W_p:=K_2^M\big(S[[t]],(t^p)\big)\,,\qquad p\geqslant 1\,.
\end{equation}
It follows from Lemma~\ref{lem:generMilnorK} that there are embeddings
$$
V_p\supset W_p\supset V_{2p-1}\,,\qquad p\geqslant 0\,.
$$

\begin{proposition}\label{prop:vanish}
For any $p\geqslant 0$, there is an embedding
$$
p^{p-1}\cdot V_p\subset W_p
$$
of subgroups in $K_2^M\big(S[[t]]\big)$. In particular, if $p\geqslant 1$ is invertible in $S$, then there is an equality~${V_p=W_p}$.
\end{proposition}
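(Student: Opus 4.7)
The plan is to fix $p \geq 1$ (the case $p=0$ being trivial since $V_0 = W_0 = K_2^M(S[[t]])$) and prove the sharper auxiliary inclusion
\[
p \cdot V_q \subset V_{q+1} + W_p \qquad \text{for every } q \geq p.
\]
Iterating this inclusion $p-1$ times starting at $q = p$ then yields $p^{p-1} V_p \subset V_{2p-1} + W_p$, and the already-noted embedding $V_{2p-1} \subset W_p$ gives the desired conclusion.

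To prove the auxiliary inclusion I would fix $q \geq p$ and consider a generator $\{1+t^i u,\,1+t^j v\}$ of $V_q$ with $i+j \geq q$ and $u, v \in S[[t]]$. By Lemma~\ref{lem:generMilnorK}, the symbol already lies in $W_p$ whenever $i \geq p$ or $j \geq p$, and it lies in $V_{q+1}$ whenever $i+j > q$. So the only interesting range is $1 \leq i, j \leq p-1$ with $i + j = q \in [p, 2p-2]$. Reducing $u, v$ modulo $t$ changes the symbol only by an element of $V_{q+1}$, so one may assume $u = a,\, v = b \in S$, and the task becomes proving
\[
p \cdot \{1 + t^i a,\, 1 + t^j b\} \;\equiv\; 0 \pmod{V_{q+1} + W_p}.
\]

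At this point I would invoke the machinery of Subsections~\ref{subsec:constrsymbforms} and~\ref{subsec:key}. The maps $\Omega^1_S \to K_2^M(S[[t]])/V_{q+1}$ of Subsection~\ref{subsec:constrsymbforms}, built from the Gorchinskiy--Osipov isomorphism applied to the dual-numbers ring $S[\varepsilon]/(\varepsilon^2)$, give an explicit description of the level-$q$ graded piece of the filtration $V_\bullet$ in terms of $\Omega^1_S$. The auxiliary ring $S' = S[[x]]$ (again weakly $5$-fold stable, since weak $k$-fold stability is preserved under passing to formal power series) together with the family of $S[[t]]$-algebra homomorphisms $\varphi_r \colon S'[[t]] \to S[[t]]$, $x \mapsto t^r$, from Lemma~\ref{lem:lift}, permit one to transport relations between different filtration indices by varying $r$. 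The key technical Proposition~\ref{prop:key} supplies a universal identity in $K_2^M(S'[[t]])$ whose image under a suitable $\varphi_r$ realizes $p \cdot \{1 + t^i a,\, 1 + t^j b\}$ as the sum of an element of $V_{q+1}$ and one of $W_p$.

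The main obstacle is Proposition~\ref{prop:key} itself. Producing the required universal identity demands delicate manipulations of Steinberg relations, and it is here that the full strength of the weak $5$-fold stability assumption is used, to ensure the availability of enough units in $S'[[t]]$ for those manipulations. Once Proposition~\ref{prop:key} is in hand, the remainder of the proof of Proposition~\ref{prop:vanish} is essentially bookkeeping in the filtration $V_\bullet$, together with the iterative argument already described.
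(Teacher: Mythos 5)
Your overall skeleton is exactly the paper's: you reduce to the auxiliary inclusion $p\cdot V_q\subset W_p+V_{q+1}$ for $p\leqslant q\leqslant 2p-2$ (the paper's formula~\eqref{eq:induct}), iterate it and finish with $V_{2p-1}\subset W_p$; and your reduction of a generator of $V_q$ to the form $\{1+at^i,1+bt^j\}$ with $a,b\in S$, $1\leqslant i,j\leqslant p-1$, $i+j=q$, modulo $V_{q+1}+W_p$, is correct. The problem is the one place where you commit to a mechanism: ``Proposition~\ref{prop:key} supplies a universal identity in $K_2^M\big(S'[[t]]\big)$ whose image under a suitable $\varphi_r$ realizes $p\cdot\{1+t^ia,1+t^jb\}$ as the sum of an element of $V_{q+1}$ and one of $W_p$.'' As stated this step does not work, and it is precisely where the real content of the proof lies. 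First, the factor $p$ has to be accounted for: Proposition~\ref{prop:key} applied at level $q=i+j$ produces the multiplier $q$, not $p$, so one must apply it (equivalently, Corollary~\ref{cor:key}) over $S'=S[[x]]$ at level \emph{exactly} $p$, namely to a lifted symbol such as $\{1+axt^{\,i-r},1+bt^j\}\in V'_p$ with $r=q-p$ (this lift exists because $j\leqslant p-1$ forces $i>r$), obtaining a congruence $p\,\{1+axt^{\,i-r},1+bt^j\}\in W'_p+V'_{p+1}$.

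Second, transporting that congruence to $S[[t]]$ by a single algebra homomorphism $\varphi_r=\theta_r\colon x\mapsto t^r$ fails: $\theta_r$ fixes every symbol not involving $x$, so $\theta_r(V'_{p+1})$ is contained only in $V_{p+1}$, not in $V_{q+1}$. You would then conclude merely $p\cdot V_q\subset W_p+V_{p+1}$, which gains no filtration level and makes the iteration collapse after the first step. The transport must be done by the \emph{difference} $\theta_r-\theta_0$ on $K_2^M$, which is the actual content of Lemma~\ref{lem:lift} and Corollary~\ref{cor:theta}: it sends $V'_{p+1}$ into $V_{p+r+1}=V_{q+1}$, sends $W'_p$ into $W_p$ (both $\theta_r$ and $\theta_0$ fix $t^p$), and maps the lifted symbol to your generator since $\theta_0$ kills the $x$-part, i.e.\ $(\theta_r-\theta_0)\{1+axt^{\,i-r},1+bt^j\}=\{1+at^i,1+bt^j\}$ (this is also why the paper records the surjectivity $(\theta_r-\theta_0)(V'_p)=V_{p+r}$). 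Once you replace ``image under a suitable $\varphi_r$'' by ``image under $\theta_r-\theta_0$ applied to the level-$p$ congruence over $S'$,'' your argument becomes the paper's proof verbatim, and the concluding iteration is exactly as you describe.
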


Proposition~\ref{prop:vanish} is proved in Subsection~\ref{subsec:proofvanish}. Here is an important corollary of Proposition~\ref{prop:vanish}, which is, actually, its equivalent formulation.

\begin{corollary}\label{cor:vanish}
For all elements $a,b\in S$ and natural numbers $i,j,p\geqslant 0$ such that $i+j\geqslant p$, there is an equality
$$
p^{p-1}\cdot\{1+a\bar t^{\,i},1+b\bar t^{\,j}\}=0
$$
in $K_2^M\big(S[t]/(t^p)\big)$, where $\bar t$ denotes the image of $t$ in $S[t]/(t^p)$. In particular, if $p\geqslant 1$ is invertible in $S$, then for all elements $a,b\in S$ and natural numbers $i,j\geqslant 0$ such that $i+j\geqslant p$, there is a vanishing~${\{1+a\bar t^{\,i},1+b\bar t^{\,j}\}=0}$ in~${K_2^M\big(S[t]/(t^p)\big)}$.
\end{corollary}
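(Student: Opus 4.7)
The plan is to derive the corollary directly from Proposition~\ref{prop:vanish} by applying the functoriality of $K_2^M$ to the natural projection of rings $S[[t]] \twoheadrightarrow S[[t]]/(t^p) = S[t]/(t^p)$, which sends $t$ to $\bar t$. All the substantive work has already been done in proving the proposition, so the corollary is essentially a transport of that information to the quotient ring.

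First, I would observe that by the definition~\eqref{eq:Wn} and Definition~\ref{def:relmil}, the kernel of the induced surjection
$$
K_2^M\bigl(S[[t]]\bigr)\twoheadrightarrow K_2^M\bigl(S[t]/(t^p)\bigr)
$$
is precisely $W_p$. Next, for $i,j\geqslant 0$ with $i+j\geqslant p$ and any $a,b\in S$, the elements $1+at^i$ and $1+bt^j$ belong to $U_i$ and $U_j$ respectively, so the symbol $\{1+at^i,\,1+bt^j\}\in K_2^M\bigl(S[[t]]\bigr)$ is one of the generators of $V_p$ in~\eqref{eq:Vn}. Applying Proposition~\ref{prop:vanish} gives ${p^{p-1}\{1+at^i,\,1+bt^j\}\in W_p}$, and passing to the quotient yields the desired vanishing $p^{p-1}\{1+a\bar t^{\,i},\,1+b\bar t^{\,j}\}=0$ in $K_2^M\bigl(S[t]/(t^p)\bigr)$.

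For the ``in particular'' clause, when $p\geqslant 1$ is invertible in $S$, Proposition~\ref{prop:vanish} provides the sharper equality $V_p=W_p$, so $\{1+at^i,\,1+bt^j\}$ itself lies in $W_p$, and its image $\{1+a\bar t^{\,i},\,1+b\bar t^{\,j}\}$ vanishes in $K_2^M\bigl(S[t]/(t^p)\bigr)$ without any scalar factor. The degenerate cases $i\geqslant p$ or $j\geqslant p$ are trivial, since then one of the two arguments reduces to $1$ in the quotient.

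No genuine obstacle appears here: the corollary is a direct translation of Proposition~\ref{prop:vanish} under the canonical projection $S[[t]]\twoheadrightarrow S[t]/(t^p)$. The remark in the paper that this corollary is an equivalent formulation of the proposition is also easy to verify in the reverse direction, since every generator $\{f,g\}$ of $V_p$ with $f\in U_i$, $g\in U_j$ maps into the image of these elementary generators modulo $W_p$ by bilinearity of symbols combined with truncation of power series modulo $t^p$.
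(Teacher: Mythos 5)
Your proof is correct and follows exactly the route the paper intends: lift the symbol to a generator of $V_p$ in $K_2^M\bigl(S[[t]]\bigr)$, apply Proposition~\ref{prop:vanish} to get $p^{p-1}\{1+at^i,1+bt^j\}\in W_p=K_2^M\bigl(S[[t]],(t^p)\bigr)$, and project along $S[[t]]\twoheadrightarrow S[t]/(t^p)$, whose kernel on $K_2^M$ is $W_p$ by Definition~\ref{def:relmil}. The paper treats the corollary as an immediate reformulation of the proposition, so there is nothing further to add.
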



We will use systematically an auxiliary $S$-algebra
$$
S'=S[[x]]
$$
and also the algebra $S'[[t]]=S[[x,t]]$ over $S[[t]]$. Let the subgroups
$$
U'_p\subset S'[[t]]^*\,,\qquad V'_p,W'_p\subset K_2^M\big(S'[[t]]\big)\,,\qquad p\geqslant 0\,,
$$
be defined similarly as~$U_p,V_p,W_p$ with $S$ being replaced by~$S'$. Define the homomorphisms of algebras over $S[[t]]$
$$
\theta_0\;:\; S'[[t]]\longrightarrow S[[t]]\,,\qquad f(x,t)\longmapsto f(0,t)\,,
$$
$$
\theta_q\;:\; S'[[t]]\longrightarrow S[[t]]\,,\qquad f(x,t)\longmapsto f(t^q,t)\,,
$$
where $q\geqslant 1$. In other words, $\theta_0$ is the quotient map to $S'[[t]]/(x)\simeq S[[t]]$.

The following lemma gives a way to make induction with respect to the indices in the filtration $V_p$, $p\geqslant 0$. The possibility of making such induction is the reason to introduce this filtration.

\begin{lemma}\label{lem:lift}
For all $p,q\geqslant 0$, there is an embedding
$$
(\theta_q-\theta_0)(V'_p)\subset V_{p+q}
$$
of subgroups in $K_2^M\big(S[[t]]\big)$, where we use the homomorphism of groups
\begin{equation}\label{eq:theta-theta}
\theta_q-\theta_0\;:\; K_2^M\big(S'[[t]]\big)\longrightarrow K_2^M\big(S[[t]]\big)\,.
\end{equation}
\end{lemma}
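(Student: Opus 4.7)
The plan is to reduce the claim to generators of $V'_p$ and verify it by a direct bilinearity computation. Since the map in~\eqref{eq:theta-theta} is a homomorphism of abelian groups and, by definition, $V'_p$ is generated by symbols $\{f,g\}$ with $f\in U'_i$, $g\in U'_j$, and $i+j\geqslant p$, it suffices to show that $(\theta_q-\theta_0)\{f,g\}\in V_{p+q}$ on such a generator. The case $q=0$ is vacuous, so from now on I assume $q\geqslant 1$.

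The key filtration estimate is the following: for any $h\in S'[[t]]$, one has $h(t^q,t)-h(0,t)\in t^q S[[t]]$, since the difference factors through $x\cdot S'[[t]]$ followed by the substitution $x\mapsto t^q$. Writing $f=1+t^i\tilde f$ when $i\geqslant 1$ (and using this estimate directly on $f$ itself when $i=0$), and then dividing by the unit $f(0,t)\in U_i\subset S[[t]]^*$, one obtains
$$
\alpha_f\;:=\;\theta_q(f)/\theta_0(f)\;\in\;U_{i+q}\,,
$$
and analogously $\alpha_g:=\theta_q(g)/\theta_0(g)\in U_{j+q}$. Expanding by bilinearity of the Milnor symbol,
$$
\theta_q\{f,g\}\;=\;\{f(0,t)\cdot\alpha_f,\,g(0,t)\cdot\alpha_g\}\;=\;\theta_0\{f,g\}+\{f(0,t),\alpha_g\}+\{\alpha_f,g(0,t)\}+\{\alpha_f,\alpha_g\}\,.
$$
Each of the last three symbols pairs elements of some $U_a$ and $U_b$ with $a+b\geqslant i+j+q\geqslant p+q$, hence each lies in $V_{p+q}$ by definition~\eqref{eq:Vn}. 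Therefore the difference $\theta_q\{f,g\}-\theta_0\{f,g\}$ belongs to $V_{p+q}$, as required.

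I do not expect any genuine obstacle here: the argument is purely formal bookkeeping of indices in the $t$-adic filtration on $S[[t]]$. The only point requiring mild care is to treat the cases $i=0$ and $i\geqslant 1$ (and similarly for $j$) uniformly, which the display above handles by combining the uniform statement $h(t^q,t)-h(0,t)\in t^q S[[t]]$ with the fact that $f(0,t),g(0,t)$ are units in $S[[t]]$, so division does not disturb the $t$-adic order.
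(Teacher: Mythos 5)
Your proof is correct and follows essentially the same route as the paper: the paper factors $f=\theta_0(f)\cdot\tilde f$ with $\tilde f\in 1+xt^iS'[[t]]$ and notes $\theta_q(\tilde f)\in U_{i+q}$, which is exactly your unit $\alpha_f=\theta_q(f)/\theta_0(f)$, after which both arguments expand the symbol bilinearly and count the filtration indices of the three correction terms. The only (cosmetic) difference is that you perform the factorization in $S[[t]]$ after applying $\theta_q$, while the paper factors in $S'[[t]]$ first.
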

\begin{proof}
Consider a symbol $\{f,g\}\in V'_p$, where $f\in U'_i$, $g\in U'_j$, and ${i+j\geqslant p}$. Write
$$
f=\theta_0(f)\cdot \tilde f\,,\qquad g=\theta_0(g)\cdot \tilde g\,,
$$
where ${\theta_0(f),\theta_0(g)\in S[[t]]^*\subset S'[[t]]^*}$ and ${\tilde f, \tilde g\in 1+xS'[[t]]}$. We have equalities in~$S[[t]]^*$
$$
\theta_q(f)=\theta_0(f)\cdot\theta_q(\tilde f)\,,\qquad \theta_q(g)=\theta_0(g)\cdot\theta_q(\tilde g)\,.
$$
Thus there are equalities in $K_2^M\big(S[[t]]\big)$
$$
(\theta_q-\theta_0)\,\{f,g\}=\{\theta_q(f),\theta_q(g)\}-\{\theta_0(f),\theta_0(g)\}=
$$
$$
=\{\theta_0(f),\theta_q(\tilde g)\}+\{\theta_q(\tilde f),\theta_0(g)\}+\{\theta_q(\tilde f),\theta_q(\tilde g)\}\,.
$$
Clearly, ${\theta_0(f)\in U_i\subset U'_i}$ and ${\theta_0(g)\in U_j\subset U'_j}$. Therefore
$$
\tilde f\in U'_i\cap \big(1+xS'[[t]]\big)=1+xt^iS'[[t]]
$$
and, similarly, $\tilde g\in 1+xt^jS'[[t]]$. Hence we have ${\theta_q(\tilde f)\in U_{i+q}}$ and ${\theta_q(\tilde g)\in U_{j+q}}$, which finishes the proof.
\end{proof}

Lemma~\ref{lem:lift} implies immediately the following fact.

\begin{corollary}\label{cor:theta}
For all $p,q\geqslant 0$, the homomorphism in formula~\eqref{eq:theta-theta} induces a homomorphism
$$
K_2^M\big(S'[[t]]\big)/V'_p\longrightarrow K_2^M\big(S[[t]]\big)/V_{p+q}\,,
$$
which we denote also by $\theta_q-\theta_0$ for simplicity.
\end{corollary}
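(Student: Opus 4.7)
The plan is essentially a one-step factorization argument built on top of Lemma~\ref{lem:lift}. First I would observe that $\theta_q - \theta_0$ is a well-defined homomorphism of abelian groups from $K_2^M\big(S'[[t]]\big)$ to $K_2^M\big(S[[t]]\big)$: both $\theta_q$ and $\theta_0$ are ring homomorphisms and therefore induce group homomorphisms on $K_2^M$, so their difference is defined in the additive group $K_2^M\big(S[[t]]\big)$. I would then compose with the quotient projection $\pi\colon K_2^M\big(S[[t]]\big)\twoheadrightarrow K_2^M\big(S[[t]]\big)/V_{p+q}$ to obtain a homomorphism $\pi\circ(\theta_q-\theta_0)\colon K_2^M\big(S'[[t]]\big)\to K_2^M\big(S[[t]]\big)/V_{p+q}$.

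Next, Lemma~\ref{lem:lift} supplies exactly the inclusion $(\theta_q-\theta_0)(V'_p)\subset V_{p+q}$, so the composite $\pi\circ(\theta_q-\theta_0)$ vanishes identically on $V'_p$. By the universal property of the quotient group, this map factors uniquely through $K_2^M\big(S'[[t]]\big)/V'_p$, producing the claimed homomorphism $K_2^M\big(S'[[t]]\big)/V'_p\to K_2^M\big(S[[t]]\big)/V_{p+q}$. There is no genuine obstacle here: the entire substance is already contained in Lemma~\ref{lem:lift}, and the corollary is essentially a notational repackaging of that lemma so that $\theta_q-\theta_0$ can be applied directly on filtration quotients during the inductive arguments of Subsection~\ref{subsec:proofvanish}.
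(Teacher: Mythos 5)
Your proposal is correct and follows the same route as the paper, which simply states that the corollary follows immediately from Lemma~\ref{lem:lift}: the inclusion $(\theta_q-\theta_0)(V'_p)\subset V_{p+q}$ means the composite with the projection onto $K_2^M\big(S[[t]]\big)/V_{p+q}$ kills $V'_p$, so it factors through the quotient. Your write-up just makes this routine factorization explicit.
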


\subsection{Construction of symbols from differential forms}\label{subsec:constrsymbforms}

Let us give a way to construct elements in the quotients ${K_2^M\big(S[[t]]\big)/V_{p+1}}$, $p\geqslant 1$, from differential forms in $\Omega^1_S$.

Since $2$ is invertible in $S$ and the ring $S$ is weakly $5$-fold stable, by~\cite[Theor.\,2.9]{GorchinskiyOsipov2015Miln}, there is an isomorphism
\begin{equation}\label{eq:isomBloch2}
K_2^M\big(S[t]/(t^2),(\bar t\,)\big)\stackrel{\sim}\longrightarrow \Omega^1_S\,,
\end{equation}
which sends a symbol $\{1+ab\bar t,b\}$ to the differential form $adb$ for any $a\in S$ and $b\in S^*$ (see also Example~\ref{examp:Blochmap2}), where, as above, $\bar t$ denotes the image of $t$ in the quotient $S[t]/(t^2)$.

For any $p\geqslant 1$, define the homomorphism of $S$-algebras
$$
\lambda_p\;:\;S[[t]]\longrightarrow S[[t]]\,,\qquad f(t)\longmapsto f(t^p)\,.
$$
There are embeddings $\lambda_p(W_2)\subset W_{2p}\subset V_{2p}\subset V_{p+1}$. Thus the homomorphism
$$
\lambda_p\;:\; K_2^M\big(S[[t]]\big)\longrightarrow K_2^M\big(S[[t]]\big)
$$
induces a homomorphism
$$
K_2^M\big(S[[t]]\big)/{W_2}\longrightarrow K_2^M\big(S[[t]]\big)/{V_{p+1}}\,,
$$
which we denote also by $\lambda_p$ for simplicity.

Consider the composition
$$
\phi_p\;:\;\Omega^1_S\longrightarrow K^M_2\big(S[[t]]\big)/W_2\stackrel{\lambda_p}\longrightarrow K_2^M\big(S[[t]]\big)/V_{p+1}\,,
$$
where the first map is the inverse of the isomorphism~\eqref{eq:isomBloch2} followed by the embedding
$$
{K_2^M\big(S[t]/(t^2),(\bar t\,)\big)\subset K_2^M\big(S[t]/(t^2)\big)}\simeq K_2^M\big(S[[t]]\big)/W_2\,.
$$
In what follows, we denote similarly elements in~$K_2^M\big(S[[t]]\big)$ and their images under the quotient map ${K_2^M\big(S[[t]]\big)\to K_2^M\big(S[[t]]\big)/V_{p+1}}$ to ease notation. For any $a\in S$ and~${b\in S^*}$, we have an equality in~${K_2^M\big(S[[t]]\big)/V_{p+1}}$
\begin{equation}\label{eq:phi}
\phi_p(adb)=\{1+abt^p,b\}\,.
\end{equation}

To avoid confusion, we denote by $\phi'_p$ the map $\phi_p$ with $S'$ in place of $S$, that is, we define the maps
$$
\phi'_p\;:\;\Omega^1_{S'}\longrightarrow K_2^M\big(S'[[t]]\big)/V'_{p+1}\,,\qquad p\geqslant 1\,.
$$

\begin{lemma}\label{lem:expldif}
For all $p\geqslant 1$, $q\geqslant 0$, and $a,b\in S$, there is an equality
$$
\big((\theta_q-\theta_0)\circ \phi'_p\big)(axdb)=\phi_{p+q}(adb)
$$
in $K_2^M\big(S[[t]]\big)/V_{p+q+1}$, where we use the homomorphism of groups
$$
\theta_q-\theta_0\;:\; K_2^M\big(S'[[t]]\big)/V'_{p+1}\longrightarrow K_2^M\big(S[[t]]\big)/V_{p+q+1}
$$
from Corollary~\ref{cor:theta}.
\end{lemma}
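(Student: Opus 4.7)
The plan is to prove the lemma by direct calculation, first reducing to the case where $b$ is invertible and then applying formula~\eqref{eq:phi} in both $S'$ and $S$.

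First I would observe that both sides of the claimed equality are additive in the differential form $a x\, db$ (the left hand side because $\phi_p'$ and $\theta_q-\theta_0$ are group homomorphisms, the right hand side because $\phi_{p+q}$ is a group homomorphism and $d$ is additive). Since $S$ is weakly $5$-fold stable, it is in particular weakly $2$-fold stable, so every element $b\in S$ can be written as $b=b_1+b_2$ with $b_1,b_2\in S^*$. Then $a x\, db=a x\, db_1+a x\, db_2$ and $a\, db=a\, db_1+a\, db_2$, which reduces the lemma to the case $b\in S^*$.

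For $b\in S^*$, I have $b\in (S')^*$ as well. Formula~\eqref{eq:phi} applied to $S'$ (with $a$ replaced by $ax$) gives the equality
$$
\phi'_p(ax\, db)=\{1+abxt^p,\,b\}\in K_2^M\bigl(S'[[t]]\bigr)/V'_{p+1}\,.
$$
Since $\theta_q(1+abxt^p)=1+abt^{p+q}$ and $\theta_0(1+abxt^p)=1$, while $\theta_q(b)=\theta_0(b)=b$, I obtain
$$
(\theta_q-\theta_0)\{1+abxt^p,\,b\}=\{1+abt^{p+q},\,b\}-\{1,\,b\}=\{1+abt^{p+q},\,b\}
$$
in $K_2^M\bigl(S[[t]]\bigr)/V_{p+q+1}$, where I have used Corollary~\ref{cor:theta} to view $\theta_q-\theta_0$ as a map modulo $V_{p+q+1}$. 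On the other hand, formula~\eqref{eq:phi} applied to $S$ with $p$ replaced by $p+q$ gives
$$
\phi_{p+q}(a\, db)=\{1+abt^{p+q},\,b\}\in K_2^M\bigl(S[[t]]\bigr)/V_{p+q+1}\,,
$$
which matches the previous expression and finishes the case $b\in S^*$.

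There is no real obstacle here: the content of the lemma is essentially the compatibility between $\phi'_p$, $\phi_{p+q}$, and the ring homomorphism $\theta_q$ sending $x\mapsto t^q$, and everything is dictated by the formula~\eqref{eq:phi} on generators. The only point requiring a small argument is the extension from invertible $b$ to arbitrary $b\in S$, which is handled by additivity together with weak $2$-fold stability.
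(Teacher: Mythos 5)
Your proof is correct and follows essentially the same route as the paper: reduce to invertible $b$ using that weak $2$-fold stability makes $S$ additively generated by units, then compute both sides via formula~\eqref{eq:phi} and the evaluation of $\theta_q-\theta_0$ on the symbol $\{1+abxt^p,b\}$. The only cosmetic difference is that you spell out the two terms $\{1+abt^{p+q},b\}-\{1,b\}$ explicitly, which the paper compresses into a single chain of equalities.
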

\begin{proof}
Since the ring $S$ is weakly $5$-fold stable and, in particular, is weakly \mbox{$2$-fold} stable,~$S$ is generated additively by invertible elements. Hence it is enough to prove the lemma in the case when $b$ is invertible. Then, by formula~\eqref{eq:phi}, there are equalities
$$
\big((\theta_q-\theta_0)\circ \phi'_p\big)(axdb)=(\theta_q-\theta_0)\,\{1+axbt^{p},b\}=\{1+ab t^{p+q},b\}=\phi_{p+q}(adb)
$$
in $K_2^M\big(S[[t]]\big)/V_{p+q+1}$.
\end{proof}

\subsection{Key result}\label{subsec:key}

Recall the following lemma, which was proved by Morrow~\cite[Lem.\,3.6]{Morrow} using a method which goes back to Nesterenko and Suslin~\cite[Lem.\,3.2]{NesterenkoSuslin} (see also Lemma 2.2 from the paper of Kerz~\cite{Kerz}).

\begin{lemma}\label{lem:sign}
Let $R$ be a weakly $5$-fold stable ring. Then for all elements $r, s\in R^{*}$, there is an equality $\{r,s\}=-\{s,r\}$ in~$K^{M}_{2}(R)$.
\end{lemma}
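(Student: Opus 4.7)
The plan is to deduce the skew-symmetry $\{r,s\}=-\{s,r\}$ from the Steinberg relations in three stages, following the classical Nesterenko--Suslin strategy adapted to the weakly $5$-fold stable setting.

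First, I would establish the preliminary identity $\{a,-a\}=0$ whenever $a\in R^*$ and $1-a\in R^*$. The input is the rational identity $-a=(1-a)/(1-a^{-1})$, which gives
\[
\{a,-a\}=\{a,1-a\}-\{a,1-a^{-1}\}=0-\{a,1-a^{-1}\}=\{a^{-1},1-a^{-1}\}=0,
\]
using bilinearity and the elementary fact $\{a^{-1},b\}=-\{a,b\}$ (coming from $\{1,b\}=0$). Expanding $\{a,-a\}=\{a,-1\}+\{a,a\}$ and comparing with the middle line of the chain above also yields $\{a,a\}=\{a,-1\}$ and $2\{a,-1\}=0$ in the same range.

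Second, for $a,b\in R^*$ with $1-a$, $1-b$, $ab$, $1-ab$ all invertible, I would expand the vanishing $\{ab,-ab\}=0$ bilinearly:
\[
0=\{a,-a\}+\{a,b\}+\{b,-a\}+\{b,b\}=\{a,b\}+\{b,a\}+2\{b,-1\},
\]
where Stage 1 supplies $\{a,-a\}=0$, $\{b,b\}=\{b,-1\}$ and $2\{b,-1\}=0$, and $\{b,-a\}=\{b,-1\}+\{b,a\}$ is bilinearity. This gives skew-symmetry in the restricted regime where all six units $a,1-a,b,1-b,ab,1-ab$ are available.

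Third, for arbitrary $r,s\in R^*$, I would use weak $5$-fold stability to supply a single shift witnessing the needed invertibilities all at once: applied to a $4$-tuple such as $(-1,-r,-s,-rs)$ it produces $c\in R^*$ with $c-1$, $c-r$, $c-s$, $c-rs$ simultaneously invertible. I would then split $r=c\cdot (c^{-1}r)$ and $s=c\cdot (c^{-1}s)$ (and analogously write $1-$ versions as ratios of units) and expand both $\{r,s\}$ and $\{s,r\}$ bilinearly, so that each pair $\{a,b\}$ arising in the expansion falls into the Stage~2 regime. Applying Stage~2 term by term yields $\{r,s\}+\{s,r\}=0$.

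The main obstacle is Stage~3: choosing the multiplicative decompositions so that \emph{every} pair appearing after bilinear expansion lies in the six-fold invertibility regime of Stage~2, and verifying that the four shifts guaranteed by weak $5$-fold stability really suffice. This bookkeeping is the content of the Nesterenko--Suslin--Morrow trick, and is where the quantitative hypothesis ``weakly $5$-fold'' (as opposed to a weaker stability) enters essentially.
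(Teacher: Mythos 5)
Your Stages 1 and 2 are correct, and they reproduce the classical Nesterenko--Suslin computation; note that the paper does not reprove this lemma at all but simply quotes Morrow's Lemma 3.6, so what you are really attempting is a reconstruction of that cited argument. The genuine gap is Stage 3, which is the only place where weak $5$-fold stability enters, and the reduction you sketch does not work as stated. With $c$ chosen so that $c-1$, $c-r$, $c-s$, $c-rs$ are invertible, the splittings $r=c\cdot(c^{-1}r)$ and $s=c\cdot(c^{-1}s)$ turn $\{r,s\}+\{s,r\}$ into a sum over the pairs $(c,c)$, $(c,c^{-1}s)$, $(c^{-1}r,c)$, $(c^{-1}r,c^{-1}s)$; for these to lie in your Stage 2 regime you would need, among other things, $1-s$, $1-r$, $1+c$ (to make $1-c^2$ a unit) and $c^2-rs$ to be invertible, and none of these is supplied by your choice of $c$. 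Deferring this as ``bookkeeping'' leaves exactly the quantitative content of the lemma unproved.

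The repair, which is in substance what Morrow does, is to decompose one variable at a time rather than both at once, working with the symmetric pairing $\langle a,b\rangle:=\{a,b\}+\{b,a\}$, which is bilinear and vanishes by your Stage 2 whenever $1-a$, $1-b$, $1-ab$ are invertible. First assume $1-r$ is invertible: apply weak $5$-fold stability to the $4$-tuple $(-1,-s,-r^{-1},-rs)$ to obtain a unit $w$ with $w-1$, $w-s$, $w-r^{-1}$, $w-rs$ invertible; then $1-w$, $1-w^{-1}s$, $1-rw$, $1-rw^{-1}s$ are units, so $\langle r,s\rangle=\langle r,w\rangle+\langle r,w^{-1}s\rangle=0$ by two applications of Stage 2. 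For arbitrary $r$, choose (weak $3$-fold stability already suffices here) a unit $u$ with $u-1$ and $u-r$ invertible and conclude $\langle r,s\rangle=\langle u,s\rangle+\langle u^{-1}r,s\rangle=0$ by the previous step. The key step imposes exactly four invertibility conditions on a single shift, which is precisely why the hypothesis is weak $5$-fold (i.e.\ $4$-tuple) stability; your simultaneous splitting would instead require eight conditions on two parameters, which this hypothesis does not provide.
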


Here is a key result to prove further Proposition~\ref{prop:vanish}.

\begin{proposition}\label{prop:key}
For all elements $a,b\in S$ and natural numbers $i,j\geqslant 1$, there is an equality
\begin{equation}\label{eq:key}
(i+j)\{1+at^i,1+bt^j\}=\phi_{i+j}(iadb-jbda)
\end{equation}
in $K_2^M\big(S[[t]]\big)/V_{i+j+1}$.
\end{proposition}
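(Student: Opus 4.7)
My plan is the following.

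First, I would reduce to the case $a, b \in S^*$. Modulo $V_{i+j+1}$, the symbol $\{1+at^i, 1+bt^j\}$ is bilinear in $(a,b)$: since $(1+a_1 t^i)(1+a_2 t^i) = 1+(a_1+a_2)t^i + a_1 a_2 t^{2i}$, the assignment $a \mapsto \{1+at^i, 1+bt^j\}$ becomes additive after passing to the quotient, the quadratic correction $a_1 a_2 t^{2i}$ contributing a symbol in $V_{2i+j} \subseteq V_{i+j+1}$ since $i\ge 1$, and analogously in the $b$-slot. The right-hand side is manifestly bilinear via additivity of $\phi_{i+j}$. As $S$ is weakly $5$-fold stable, hence weakly $2$-fold stable, every element of $S$ is a sum of two units, so bilinearity reduces to $a,b \in S^*$.

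Second, for $a,b \in S^*$, I would derive the identity from the Steinberg relations in $S[[t]]$. The only relations usable at the level of the filtration~$V_p$ are those of the form $\{c+\xi,\,(1-c)-\xi\}=0$ with $c,1-c\in S^*$ and $\xi \in (t)$, since these are the ones where both factors lie in $S[[t]]^*$. Taking variously $\xi=at^i$, $\xi=bt^j$, and $\xi=at^i+bt^j$ (or combinations), factoring each Steinberg relation through $c+\xi = c(1+\xi/c)$ and $(1-c)-\xi = (1-c)(1-\xi/(1-c))$, and cancelling $\{c,1-c\}=0$, one obtains relations of the form
\[
\{1+\xi/c,\;1-\xi/(1-c)\} = -\{c,\;1-\xi/(1-c)\} - \{1+\xi/c,\;1-c\}.
\]
Antisymmetry from Lemma~\ref{lem:sign} (whose proof uses weak $5$-fold stability essentially) together with the defining formula $\phi_p(adb)=\{1+abt^p, b\}$ allow us to rewrite the lower-order symbols on the right in terms of $\phi_i$ and $\phi_j$, and carefully combining these to extract the $V_{i+j}/V_{i+j+1}$-component produces~\eqref{eq:key}.

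Third, to handle the mixed case $i\neq j$, I would exploit the auxiliary ring $S'=S[[x]]$ together with the bootstrap provided by Lemma~\ref{lem:expldif}, which records the key compatibility $(\theta_q-\theta_0)\circ \phi'_p(a x\, db) = \phi_{p+q}(a\, db)$. This lets an identity on $S'[[t]]$ at a suitable level specialize, via $\theta_1-\theta_0$, to an identity on $S[[t]]$ at a higher level, effectively allowing us to build up the asymmetric cases $i \neq j$ from a symmetric base case via induction on $|i-j|$.

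The main obstacle will be extracting exactly the asymmetric combination $iadb-jbda$ rather than a symmetric combination such as $d(ab)=adb+bda$, and with the matching coefficient $(i+j)$ on the left. The weights $i$ and $j$ enter naturally when one formally expands $\log(1+at^i)$ and $\log(1+bt^j)$: the $t^{i+j}$-coefficient comes with an asymmetric weighting dictated by the indices themselves. Tracking this weighting precisely through the Steinberg expansions, and verifying that all correction terms vanish modulo $V_{i+j+1}$ (rather than only modulo the coarser quotient $V_{i+1}$ or $V_{j+1}$ where the simpler formula $\phi_p(adb)=\{1+abt^p,b\}$ is most comfortable to apply), is the principal bookkeeping challenge.
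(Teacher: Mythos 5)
Your plan correctly identifies the ingredients the paper uses (bilinearity modulo $V_{i+j+1}$ to reduce to units, Steinberg relations plus the antisymmetry Lemma~\ref{lem:sign} for a base case, and the auxiliary ring $S'=S[[x]]$ with $\theta_q-\theta_0$ and Lemma~\ref{lem:expldif} for the asymmetric case), but it stops short of the actual content of the proposition. The derivation of the base identity $2\{1+at,1+bt\}=\phi_2(adb-bda)$ is only gestured at: the paper needs a further reduction (via weak $4$-fold stability, shifting by $c$ so that $a$, $b$ \emph{and} $a+b$ are units), the specific Steinberg symbol $\bigl\{\tfrac{b}{a+b}(1+at),\tfrac{a}{a+b}(1-bt)\bigr\}$ whose two entries sum exactly to $1$, and the involution $t\mapsto -t$ to combine the two resulting relations before invoking Lemma~\ref{lem:sign}. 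You explicitly defer exactly this bookkeeping (``the principal bookkeeping challenge''), i.e.\ the step that produces the coefficient $i+j$ and the asymmetric form $iadb-jbda$ is not carried out. (Also, the general equal-exponent case $i=j>1$ is obtained in the paper by applying $\lambda_i$ to the base identity, a step you omit.)

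More seriously, the inductive scheme you propose for $i\neq j$ --- induction on $|i-j|$, specializing via $\theta_1-\theta_0$ from the symmetric base --- does not close as stated. Applying the inductive hypothesis over $S'$ to exponents $(i,j-1)$ with the element $bx$ and then $\theta_1-\theta_0$, the discrepancy between $i\,a\,d(bx)-(j-1)bx\,da$ and the wanted $iax\,db-jbx\,da$ is $iab\,dx+bx\,da$, which for $i\geqslant 2$ is not an exact form times $b$; after applying $\phi'_{i+j-1}$ and $\theta_1-\theta_0$ it produces stray symbols such as $\{1+bt^{\,i+j-1},1+at\}$ of exponent type $(1,i+j-1)$, which are covered neither by the symmetric base nor by induction on $|i-j|$. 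The paper's choice is engineered precisely to avoid this: it inducts on $i+j$, applies the hypothesis over $S'$ to the exponents $(i,j-i)$, and specializes by $\theta_i-\theta_0$, so that the correction term $ib\,d(1+ax)$ maps to $i\{1+bt^j,1+at^i\}=-i\{1+at^i,1+bt^j\}$ by formula~\eqref{eq:phi} and Lemma~\ref{lem:sign}; absorbing it into the left-hand side is exactly what turns the coefficient $j$ into $i+j$. Without this (or an equivalent device), your step three does not yield~\eqref{eq:key}.
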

\begin{proof}
The proof is by induction on the sum $i+j$. First we consider the base of the induction, that is, the case $i+j=2$ or, equivalently, $i=j=1$. The proof repeats that of~\cite[Lemma\,3.3]{GorchinskiyOsipov2015Miln} with a minor refinement.  We need to prove that for any pair~$(a,b)$ of elements in $S$, there is an equality
\begin{equation}\label{eq:key3}
2\{1+at,1+bt\}=\phi_2(adb-bda)
\end{equation}
in $K_2^M\big(S[[t]]\big)/V_3$. Note that both sides of~\eqref{eq:key3} are linear in $a$ and $b$. Since $S$ is generated additively by invertible elements, we may assume that $a,b\in S^*$.

Moreover, since $S$ is weakly $4$-fold stable, there is $c\in S^*$ such that the elements
$$
\frac{a+b}{2}+c\,,\qquad a+c\,,\qquad b+c
$$
are invertible in~$S$. By bilinearity, it is enough to prove~\eqref{eq:key3} for the pairs ${(a+c,b+c)}$, ${(a,c)}$, $(c,b)$, and $(c,c)$. Note that each of these pairs satisfies the following condition: its both terms and their sum are invertible. Thus we may assume that~${a,b,a+b\in S^*}$.

We have the Steinberg relations in $K_2^M\big(S[[t]]\big)$
$$
\left\{\frac{b}{a+b}(1+at),\frac{a}{a+b}(1-bt)\right\}=\left\{\frac{b}{a+b},\frac{a}{a+b}\right\}=0\,.
$$
Subtracting the second symbol from the first one, we obtain an equality in~$K_2^M\big(S[[t]]\big)$
\begin{equation}\label{eq:denis1}
\left\{\frac{b}{a+b},1-bt\right\}+\left\{1+at,\frac{a}{a+b}\right\}+\{1+at,1-bt\}=0\,.
\end{equation}
Applying the automorphism of the $S$-algebra $S[[t]]$ that sends a series $f(t)$ to~$f(-t)$, we get
\begin{equation}\label{eq:denis2}
\left\{\frac{b}{a+b},1+bt\right\}+\left\{1-at,\frac{a}{a+b}\right\}+\{1-at,1+bt\}=0\,.
\end{equation}
Besides, the equalities
$$
{\{1+at,1-bt\}=\{1-at,1+bt\}=-\{1+at,1+bt\}}
$$
hold in the quotient $K_2^M\big(S[[t]]\big)/V_3$. Hence, taking the sum of~\eqref{eq:denis1} and~\eqref{eq:denis2}, we obtain an equality in~$K_2^M\big(S[[t]]\big)/V_3$
$$
\left\{\frac{b}{a+b},1-b^2t^2\right\}+\left\{1-a^2t^2,\frac{a}{a+b}\right\}-2\{1+at,1+bt\}=0\,.
$$
Since the ring $S[[t]]$ is weakly $5$-fold stable, applying Lemma~\ref{lem:sign}, we get equalities in~${K_2^M\big(S[[t]]\big)/V_3}$
$$
2\{1+at,1+bt\}=-\left\{1-b^2t^2,\frac{b}{a+b}\right\}+\left\{1-a^2t^2,\frac{a}{a+b}\right\}=
$$
$$
=-\{1-b^2t^2,b\}+\{1-b^2t^2,a+b\}+\{1-a^2t^2,a\}-\{1-a^2t^2,a+b\}=
$$
$$
=-\{1-b^2t^2,b\}+\{1-a^2t^2,a\}+\{1+(a^2-b^2)t^2,a+b\}\,.
$$
By formula~\eqref{eq:phi} from Subsection~\ref{subsec:constrsymbforms}, the latter expression is equal to
$$
\phi_2\big(bdb-ada+(a-b)d(a+b)\big)=\phi_2(adb-bda)\,,
$$
which proves the base of the induction.

Let us make the induction step to arbitrary $i+j$. First suppose that~${i=j}$. There are embeddings $\lambda_i(V_3)\subset V_{3i}\subset V_{2i+1}$. Hence the homomorphism~$\lambda_i$ induces a homomorphism
$$
K_2^M\big(S[[t]]\big)/V_3 \longrightarrow K_2^M\big(S[[t]]\big)/V_{2i+1}\,,
$$
which we denote also by $\lambda_i$ for simplicity.

Let us apply this homomorphism to equality~\eqref{eq:key3}. Note that the composition
$$
\Omega^1_S\stackrel{\phi_2}\longrightarrow K_2^M\big(S[[t]]\big)/V_3 \stackrel{\lambda_i}\longrightarrow K_2^M\big(S[[t]]\big)/V_{2i+1}
$$
is equal to $\phi_{2i}$, because~${\lambda_i\circ\lambda_2=\lambda_{2i}}$. Hence we obtain the equality
$$
2\{1+at^i,1+bt^i\}=\phi_{2i}(adb-bda)
$$
in $K_2^M\big(S[[t]]\big)/V_{2i+1}$. This gives the statement in the case $i=j$.

Now suppose that $i\ne j$. We can assume that $i<j$ (the other case is done similarly or one can use Lemma~\ref{lem:sign}). Since $S$ is weakly $5$-fold stable, the same holds for the ring~$S'=S[[x]]$. Apply the induction hypothesis to the ring $S'$ in place of $S$, to the elements $a,bx\in S'$, and to the exponents $i$, $j-i$. We obtain the equality
$$
j\{1+at^i,1+bxt^{j-i}\}=\phi'_{j}\big(iad(bx)-(j-i)bxda\big)
$$
in $K_2^M\big(S'[[t]]\big)/V'_{j+1}$.

Applying the map
$$
\theta_i-\theta_0\;:\;K_2^M\big(S'[[t]]\big)/V'_{j+1}\longrightarrow K_2^M\big(S[[t]]\big)/V_{i+j+1}
$$
from Corollary~\ref{cor:theta}, we get the equality
\begin{equation}\label{eq:keyaux}
j\{1+at^i,1+bt^j\}=\big((\theta_i-\theta_0)\circ\phi'_{j}\big)\big(iad(bx)-(j-i)bxda\big)
\end{equation}
in $K_2^M\big(S[[t]]\big)/V_{i+j+1}$.

Let us compare formula~\eqref{eq:keyaux} with the needed formula~\eqref{eq:key}. By Lemma~\ref{lem:expldif}, the right hand side of formula~\eqref{eq:key} is equal to
$$
\phi_{i+j}(iadb-jbda)=\big((\theta_i-\theta_0)\circ\phi'_{j}\big)(iaxdb-jbxda)\,.
$$
Besides, there are equalities in $\Omega^1_S$
$$
\big(iad(bx)-(j-i)bxda\big)-(iaxdb-jbxda)=iabdx+ibxda=ibd(ax)=ibd(1+ax)\,.
$$
Therefore the difference between the right hand side of formula~\eqref{eq:keyaux} and the right hand side of formula~\eqref{eq:key} is equal to
$$
\big((\theta_i-\theta_0)\circ\phi'_j\big)\big(ibd(1+ax)\big)=i(\theta_i-\theta_0)\,\{1+b(1+ax)t^j,1+ax\}=
$$
$$
=i\{1+bt^j+abt^{i+j},1+at^i\}=i\{1+bt^j,1+at^i\}=-i\{1+at^i,1+bt^j\}\,,
$$
where all equalities are in~${K_2^M\big(S[[t]]\big)/V_{i+j+1}}$, we use formula~\eqref{eq:phi} from Subsection~\ref{subsec:constrsymbforms} for the first equality and we use Lemma~\ref{lem:sign} applied to the weakly $5$-stable ring $S[[t]]$ for the last equality. The latter expression is equal to the difference between the left hand side of formula~\eqref{eq:keyaux} and the left hand side of the needed formula~\eqref{eq:key}. This proves the proposition.
\end{proof}

\subsection{Proof of Proposition~\ref{prop:vanish}}\label{subsec:proofvanish}

First we deduce a useful corollary from Propostion~\ref{prop:key}.

\begin{corollary}\label{cor:key}
For any $p\geqslant 0$, there is an embedding
$$
p\cdot V_p\subset W_p+V_{p+1}
$$
of subgroups in $K_2^M\big(S[[t]]\big)$.
\end{corollary}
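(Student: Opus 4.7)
The plan is to analyze the generators of $V_p$ case by case and invoke Proposition~\ref{prop:key} to handle the only non-trivial one. Recall that $V_p$ is generated by symbols $\{f,g\}$ with $f\in U_i$, $g\in U_j$ and $i+j\geqslant p$. Generators with $i+j\geqslant p+1$ already lie in $V_{p+1}\subset W_p+V_{p+1}$, so I can restrict attention to $i+j=p$.

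If one of $i,j$ equals zero, say $j=0$, then $f\in U_p$ means $f-1\in (t^p)$, so by Lemma~\ref{lem:generMilnorK} the symbol $\{f,g\}$ is one of the standard generators of $W_p=K_2^M\big(S[[t]],(t^p)\big)$. Thus $\{f,g\}\in W_p$ directly, with no need for the factor $p$.

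The substantive case is $i,j\geqslant 1$ with $i+j=p$. I would first reduce modulo $V_{p+1}$ to a symbol of the form $\{1+at^i,1+bt^j\}$ by peeling off leading coefficients: writing $f=(1+at^i)f'$ with $f'\in U_{i+1}$ and $g=(1+bt^j)g'$ with $g'\in U_{j+1}$, the two cross terms lie in $V_{(i+1)+j}=V_{p+1}$ and $V_{i+(j+1)}=V_{p+1}$ respectively. Proposition~\ref{prop:key} then gives
$$
p\,\{1+at^i,1+bt^j\}=(i+j)\,\{1+at^i,1+bt^j\}=\phi_p(i a\,db-j b\,da)\pmod{V_{p+1}}.
$$

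It remains to observe that the image of $\phi_p$ is represented modulo $V_{p+1}$ by elements of $W_p$. Indeed, by formula~\eqref{eq:phi}, for $c\in S$ and $b\in S^*$ one has $\phi_p(c\,db)=\{1+cbt^p,b\}$, and $1+cbt^p\in 1+(t^p)$ places this symbol in $W_p$. Since $S$ is weakly $5$-fold stable it is in particular weakly $2$-fold stable, so every element of $S$ is a sum of invertibles, and $\Omega^1_S$ is thus additively generated by forms $c\,db$ with $b\in S^*$. Consequently $\phi_p(i a\,db-j b\,da)$ lies in $W_p+V_{p+1}$, which completes the argument. The only non-routine ingredient is Proposition~\ref{prop:key} itself; the rest is essentially bookkeeping with the filtration, and I do not expect any serious obstacle.
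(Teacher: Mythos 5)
Your proposal is correct and follows essentially the same route as the paper: decompose the generators of $V_p$ modulo $V_{p+1}$, note that symbols with one entry in $U_p$ already lie in $W_p$, apply Proposition~\ref{prop:key} to the symbols $\{1+at^i,1+bt^j\}$ with $i,j\geqslant 1$, $i+j=p$, and use weak $2$-fold stability plus formula~\eqref{eq:phi} to place $\phi_p(ia\,db-jb\,da)$ in $W_p$ modulo $V_{p+1}$. The only cosmetic differences are that you verify membership in $W_p$ directly from the definition of the relative group rather than via the paper's finer list of generators, and you omit mentioning the harmless term $\{f',g'\}\in V_{p+1}$ in the peeling step.
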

\begin{proof}
The case $p=0$ is trivial. For $p\geqslant 1$, note that the group~${V_p}$ is generated by symbols of type $\{1+at^p,b\}$, $\{b,1+at^p\}$, where $a\in S$, $b\in S^*$, by symbols of type $\{1+at^i,1+bt^j\}$, where $i,j\geqslant 1$, $i+j=p$, $a,b\in S$, and by elements of the subgroup~$V_{p+1}$.

Clearly, symbols of the first type belong to $W_p$. By Proposition~\ref{prop:key}, symbols of the second type satisfy the condition
$$
p\{1+at^i,1+bt^j\}\in \phi_{p}(iadb-jbda)+V_{p+1}\,.
$$
Furthermore, we claim that ${\phi_{p}(iadb-jbda)\in W_{p}}$. Indeed, since $S$ is generated additively by invertible elements, we may assume that $a,b\in S^*$ and use formula~\eqref{eq:phi} from Subsection~\ref{subsec:constrsymbforms}. Hence, symbols of the second type multiplied by~$p$ belong to~${W_p+V_{p+1}}$.
\end{proof}

Now we are ready to prove Proposition~\ref{prop:vanish}.

\begin{proof}[Proof of Proposition~\ref{prop:vanish}]
Let us prove that for any $q$ with ${0\leqslant q\leqslant p-2}$, there is an embedding
\begin{equation}\label{eq:induct}
p\cdot V_{p+q}\subset W_p+V_{p+q+1}\,.
\end{equation}
of subgroups in $K_2^M\big(S[[t]]\big)$.

Since $S'$ is weakly $5$-fold stable, by Corollary~\ref{cor:key} applied with $S'$ in place of~$S$, we have
\begin{equation}\label{eq:inclus}
p\cdot V'_p\subset W'_p+V'_{p+1}\,.
\end{equation}
Since $\theta_q(t^p)=t^p$ and $\theta_0(t^p)=t^p$, we see that ${\theta_q(W'_p)\subset W_p}$ and ${\theta_0(W'_p)\subset W_p}$. In particular, we have
$$
(\theta_q-\theta_0)(W'_p)\subset W_p\,.
$$
By Lemma~\ref{lem:lift}, there are embeddings
$$
(\theta_q-\theta_0)(V'_p)\subset V_{p+q}\,,\qquad  (\theta_q-\theta_0)(V'_{p+1})\subset V_{p+q+1}\,.
$$
Moreover, we claim that there is an equality
\begin{equation}\label{eq:equalityV}
(\theta_q-\theta_0)(V'_p)=V_{p+q}\,.
\end{equation}
Indeed, the group $V_{p+q}$ is generated by symbols of type $\{1+at^i,1+bt^j\}$, where $i,j\geqslant 0$, $i+j\geqslant p+q$, and $a,b\in S$. Since $q<p$, either $i$ or $j$ is greater than~$q$. We may assume that $i>q$. Then we have
$$
(\theta_q-\theta_0)\,\{1+axt^{i-q},1+bt^j\}=\{1+at^i,1+bt^j\}\,,
$$
where $\{1+axt^{i-q},1+bt^j\}\in V'_{p}$. This proves equality~\eqref{eq:equalityV}.

Now, applying the map $\theta_q-\theta_0$ to formula~\eqref{eq:inclus}, we obtain formula~\eqref{eq:induct}. Using formula~\eqref{eq:induct} repeatedly, we get the embeddings
$$
p^{p-1}\cdot V_p\subset p^{p-2}\cdot (W_p+V_{p+1})\subset W_p+p^{p-2}\cdot V_{p+1}\subset\ldots\subset W_p+p\cdot V_{2p-2}\subset W_p+V_{2p-1}\,.
$$
Since $V_{2p-1}\subset W_p$, this proves the proposition.
\end{proof}

Note that in the proof of Proposition~\ref{prop:vanish} we use only Corollary~\ref{cor:key} and not Proposition~\ref{prop:key} itself. However, it is not clear to us whether it is possible to prove Corollary~\ref{cor:key} directly by induction. This is why we have replaced it with a more precise and stronger statement, namely, with Proposition~\ref{prop:key}, which admits an inductive proof.

\section{Proof of Theorem~\ref{thm:main}}\label{sec:proofmain}

\subsection{Reduction lemma}\label{subsec:redlemma}

The following lemma is our main tool to make reductions when proving that the Bloch map (see Definition~\ref{def:Blochmap}) is an isomorphism.

\begin{lemma}\label{lem:embed}
Let~$J\subset I$ be two nilpotent ideals in a ring $R$. Put $R'=R/J$, $I'=I/J$. Suppose that for a natural number $n\geqslant 0$, there are equalities
$$
H^n_{dR}(R,I)=H^n_{dR}(R',I')=H^{n-1}_{dR}(R',I')=0\,.
$$
Then the following holds:
\begin{itemize}
\item[(i)]
the Bloch maps
$$
{\rm B}_{R,I}\;:\;K_{n+1}^M(R,I)\longrightarrow \Omega^n_{R,I}/d\,\Omega^{n-1}_{R,I}\,,
$$
$$
\B_{R',I'}\;:\;K_{n+1}^M(R',I')\longrightarrow \Omega^n_{R',I'}/d\,\Omega^{n-1}_{R',I'}\,,
$$
$$
\B_{R,J}\;:\;K_{n+1}^M(R,J)\longrightarrow \Omega^n_{R,J}/d\,\Omega^{n-1}_{R,J}
$$
exist;
\item[(ii)]
if, in addition, any element in $R$ is a sum of invertible elements, then the Bloch map~$\B_{R,I}$ is an isomorphism if and only if both~$\B_{R',I'}$ and~$\B_{R,J}$ are isomorphisms.
\end{itemize}
\end{lemma}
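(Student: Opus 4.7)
The plan is to reduce everything to a five-lemma diagram chase between the exact sequence~\eqref{eq:exactMilnor} for relative Milnor $K$-groups and the exact sequence~\eqref{eq:longseqforms} for quotients of relative differential forms, connected by the three Bloch maps from part~(i). For part~(i), the existence of $\B_{R,I}$ and $\B_{R',I'}$ is immediate from Remark~\ref{rmk:Blochmap} applied to the vanishings $H^n_{dR}(R,I)=0$ and $H^n_{dR}(R',I')=0$. To get existence of $\B_{R,J}$, I would first deduce $H^n_{dR}(R,J)=0$ from the segment
$$
H^{n-1}_{dR}(R',I')\longrightarrow H^n_{dR}(R,J)\longrightarrow H^n_{dR}(R,I)
$$
of the long exact sequence~\eqref{eq:longseqcohom}, whose outer terms vanish by hypothesis, and then apply Remark~\ref{rmk:Blochmap} once more.

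For part~(ii), the main step is to assemble the commutative diagram
$$
\begin{CD}
0 @>>> K_{n+1}^M(R,J) @>>> K_{n+1}^M(R,I) @>>> K_{n+1}^M(R',I') @>>> 0 \\
@. @VV{\B_{R,J}}V @VV{\B_{R,I}}V @VV{\B_{R',I'}}V \\
0 @>>> \Omega^n_{R,J}/d\,\Omega^{n-1}_{R,J} @>>> \Omega^n_{R,I}/d\,\Omega^{n-1}_{R,I} @>>> \Omega^n_{R',I'}/d\,\Omega^{n-1}_{R',I'} @>>> 0
\end{CD}
$$
whose top row is~\eqref{eq:exactMilnor} and whose bottom row is a short exact sequence extracted from~\eqref{eq:longseqforms}, the injectivity at the left being exactly where the hypothesis $H^{n-1}_{dR}(R',I')=0$ is used. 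The squares commute by functoriality of the Bloch map: both rows are compatible with the inclusions and quotients, and $\B$ is characterized by its values~\eqref{eq:defBloch} on the generators of Lemma~\ref{lem:generMilnorK}, which behave transparently under the maps in the diagram. The forward implication (both $\B_{R,J}$ and $\B_{R',I'}$ are isomorphisms $\Rightarrow$ $\B_{R,I}$ is an isomorphism) is then just the five lemma.

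For the converse, I would first observe that since any element of $R$ is a sum of invertibles, the same holds for $R'$ (invertibles of $R$ surject onto invertibles of $R'$), so by Lemma~\ref{lem:surjBlochmap} all three Bloch maps in the diagram are surjective; hence only injectivity of $\B_{R,J}$ and $\B_{R',I'}$ is at issue. Injectivity of $\B_{R,J}$ is immediate from the diagram: the top-left and bottom-left horizontal maps are injective and $\B_{R,I}$ is injective by assumption. Injectivity of $\B_{R',I'}$ is a one-step diagram chase: a class in $\Ker(\B_{R',I'})$ lifts to $y\in K_{n+1}^M(R,I)$; the image $\B_{R,I}(y)$ maps to zero in $\Omega^n_{R',I'}/d\,\Omega^{n-1}_{R',I'}$ and so, by exactness of the bottom row, comes from some $\omega\in\Omega^n_{R,J}/d\,\Omega^{n-1}_{R,J}$; surjectivity of $\B_{R,J}$ provides $x\in K_{n+1}^M(R,J)$ with $\B_{R,J}(x)=\omega$, and injectivity of $\B_{R,I}$ forces $y$ to equal the image of $x$, whence the original class in $K_{n+1}^M(R',I')$ was zero. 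I expect no serious obstacle here; the only point that requires care is checking exactness on the left of the bottom row, which is precisely the content of the hypothesis $H^{n-1}_{dR}(R',I')=0$ via~\eqref{eq:longseqforms}.
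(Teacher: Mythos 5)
Your proposal is correct and follows essentially the same route as the paper: part (i) via Remark~\ref{rmk:Blochmap} together with the segment of~\eqref{eq:longseqcohom} giving $H^n_{dR}(R,J)=0$, and part (ii) via the same commutative diagram with exact rows, using $H^{n-1}_{dR}(R',I')=0$ for left-exactness of the bottom row and Lemma~\ref{lem:surjBlochmap} for surjectivity of the vertical maps. The only cosmetic difference is that the paper packages your five-lemma and diagram chases into the single exact sequence $0\to\Ker(\B_{R,J})\to\Ker(\B_{R,I})\to\Ker(\B_{R',I'})\to 0$ obtained from the snake lemma.
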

\begin{proof}
(i)
Since $H_{dR}^n(R,I)=H_{dR}^n(R',I')=0$, by Remark~\ref{rmk:Blochmap}, the Bloch maps~$\B_{R,I}$ and $\B_{R',I'}$ exist. Using also that $H^{n-1}_{dR}(R',I')=0$ and the exact sequence~\eqref{eq:longseqcohom} of relative de Rham cohomology from Subsection~\ref{subsec:basicdiffforms}, we obtain $H_{dR}^n(R,J)=0$. Thus, applying Remark~\ref{rmk:Blochmap} again, we see that the Bloch map $\B_{R,J}$ exists as well.

(ii)
The condition $H^{n-1}_{dR}(R',I')=0$ and the exact sequence~\eqref{eq:longseqforms} from Subsection~\ref{subsec:basicdiffforms} imply that we have an exact sequence
$$
0\longrightarrow \Omega^{n}_{R,J}/d\,\Omega^{n-1}_{R,J}\longrightarrow\Omega^{n}_{R,I}/
d\,\Omega^{n-1}_{R,I} \longrightarrow\Omega^{n}_{R',I'}/d\,\Omega^{n-1}_{R',I'}\longrightarrow 0\,.
$$
Furthermore, there is a commutative diagram with exact raws
$$
\begin{CD}
0@>>> K^M_{n+1}(R,J)@>>>  K^M_{n+1}(R,I)@>>>  K^M_{n+1}(R',I')@>>>0 \\
@.  @VV{\B}_{R,J}V @VV{\B}_{R,I}V @VV{\B}_{R',I'}V\\
0@>>> \Omega^n_{R,J}/d\,\Omega^{n-1}_{R,J} @>>> \Omega^n_{R,I}/d\,\Omega^{n-1}_{R,I} @>>> \Omega^n_{R',I'}/d\,\Omega^{n-1}_{R',I'}@>>>0
\end{CD}
$$
Since $R$ is generated additively by invertible elements, the same holds for its quotient~${R'}$. So, by Lemma~\ref{lem:surjBlochmap}, the Bloch maps in the diagram are surjective. Therefore, there is an exact sequence
$$
0\longrightarrow \Ker(\B_{R,J})\longrightarrow \Ker(\B_{R,I})\longrightarrow \Ker(\B_{R',I'})\longrightarrow 0\,,
$$
which finishes the proof.
\end{proof}

\subsection{Special case of the main result}\label{subsec:partialmain}

Let $S$ be a ring such that $N!$ is invertible in~it for a natural number $N\geqslant 1$ and $S$ is weakly $5$-fold stable (see Definition~\ref{def:weakst}).

For short, denote the \mbox{$S$-algebra}~$R_{N,1}$ (see Definition~\ref{def:elem}) just by $R_N$, that is, put
$$
R_N:=S[t]/(t^N)\,.
$$
As above, $\bar t$ denotes the image of $t$ in $R_N$.

In this subsection, we prove Theorem~\ref{thm:main} for $(\bar t)\subset R_N$. The case $N=1$ is trivial, so we assume that $N\geqslant 2$.

Define the homomorphism of \mbox{$S$-algebras}
$$
\sigma\;:\; R_2\longrightarrow R_N\,,\qquad \bar t\longmapsto \bar t^{\,N-1}\,.
$$
Appying Corollary~\ref{cor:vanish} of Proposition~\ref{prop:vanish}, we prove the following useful fact.

\begin{proposition}\label{prop:k2}
For any natural number $n\geqslant 0$, the homomorphism of groups ${\sigma\colon K_{n+1}^M(R_2)\to K_{n+1}^M(R_N)}$ restricts to a surjective homomorphism
$$
\sigma\;:\;K^{M}_{n+1}\big(R_2,(\bar t\,)\big)\longrightarrow K^{M}_{n+1}\big(R_{N},(\bar t^{\,N-1})\big)\,.
$$
\end{proposition}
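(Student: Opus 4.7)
The inclusion $\sigma(K_{n+1}^M(R_2,(\bar t\,)))\subset K_{n+1}^M(R_N,(\bar t^{\,N-1}))$ is immediate from functoriality and $\sigma((\bar t\,))\subset(\bar t^{\,N-1})$, so the proposition amounts to surjectivity. My plan is to use Lemma~\ref{lem:generMilnorK} to reduce to generators of the form $\{r_1,\ldots,r_i,1+a\bar t^{\,N-1},r_{i+1},\ldots,r_n\}$ with $r_j\in R_N^*$ and $a\in S$ (every element of the ideal $(\bar t^{\,N-1})\subset R_N$ has the form $a\bar t^{\,N-1}$ because $\bar t^{\,N}=0$), and then split each $r_j$ into a factor in $S^*$ and a factor in $1+(\bar t\,)$.

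For the decomposition, note that an element of $R_N^*$ has the form $u(1+v)$ with $u\in S^*$ and $v\in(\bar t\,)$, and a routine induction on the $\bar t$-adic filtration shows that the multiplicative group $1+(\bar t\,)\subset R_N^*$ is generated by the elements $1+b\bar t^{\,k}$ with $b\in S$ and $1\leqslant k\leqslant N-1$. Expanding by multilinearity of Milnor $K$-symbols, the generator above rewrites as a $\Z$-linear combination of symbols
$$
\{x_1,\ldots,x_i,1+a\bar t^{\,N-1},x_{i+1},\ldots,x_n\}
$$
in which each $x_j$ is either an element of $S^*$ or of the form $1+b\bar t^{\,k}$ with $b\in S$ and $k\geqslant 1$.

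The crucial step is then to kill every summand that contains at least one $x_j$ of the second type. Here I would use that $R_N$ inherits weak $5$-fold stability from $S$ (invertibility in $R_N$ is tested on constant terms, so the defining $4$-tuple can be shifted by an element of $S^*$ supplied by the weak $5$-fold stability of $S$), so that Lemma~\ref{lem:sign} supplies anticommutativity $\{r,s\}=-\{s,r\}$ in $K_2^M(R_N)$; this lets me move such an $x_j=1+b\bar t^{\,k}$ adjacent to $1+a\bar t^{\,N-1}$ in the symbol. The resulting adjacent pair $\{1+b\bar t^{\,k},1+a\bar t^{\,N-1}\}$ vanishes by Corollary~\ref{cor:vanish}, since $k+(N-1)\geqslant N$ and $N$ is invertible in $S$. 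What survives is a combination of symbols $\{u_1,\ldots,u_i,1+a\bar t^{\,N-1},u_{i+1},\ldots,u_n\}$ with $u_j\in S^*$, each of which is manifestly the $\sigma$-image of $\{u_1,\ldots,u_i,1+a\bar t,u_{i+1},\ldots,u_n\}\in K_{n+1}^M(R_2,(\bar t\,))$. I expect the main obstacle to be precisely this reordering step, which is where the hypothesis of weak $5$-fold stability on $S$ enters essentially.
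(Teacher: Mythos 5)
Your proposal is correct and follows essentially the same route as the paper's proof: reduce via Lemma~\ref{lem:generMilnorK} to symbols containing a factor $1+a\bar t^{\,N-1}$, decompose the remaining entries into elements of $S^*$ and factors $1+b\bar t^{\,k}$, use weak $5$-fold stability of $R_N$ and Lemma~\ref{lem:sign} to reorder, and kill any symbol containing a second factor of the form $1+b\bar t^{\,k}$ by Corollary~\ref{cor:vanish}. The only (immaterial) difference is that the paper first moves $1+a\bar t^{\,N-1}$ to the front, whereas you move the offending factor next to it.
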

\begin{proof}
Since the ring $S$ is weakly $5$-fold stable, the same holds for the ring~$R_N$, so we may apply Lemma~\ref{lem:sign} to $R_N$. Combining this with Lemma~\ref{lem:generMilnorK}, we see that the group $K^{M}_{n+1}\big(R_{N},(\bar t^{\,N-1})\big)$ is generated by elements of type ${\{1+a\bar t^{\,N-1},r_{1},\ldots,r_{n}\}}$, where $a\in S$ and $r_{1},\ldots,r_{n}\in R_N^*$. Decomposing invertible elements of $R_N$, we can assume that for each~$i$, $1\leqslant i\leqslant n$, we have either $r_i\in S^*$ or $r_i=1+b\bar t^{\,j}$ for some $b\in S$ and~${j\geqslant 1}$.

If all the elements $r_i$ are from $S^*$, then the symbol ${\{1+a\bar t^{\,N-1},r_{1},\ldots,r_{n}\}}$ is clearly in the image of the map in question. Otherwise, using Lemma~\ref{lem:sign} again, we may assume that $r_1=1+b\bar t^{\,j}$, where $b\in S$ and $j\geqslant 1$. Now, by Corollary~\ref{cor:vanish}, the symbol
$$
\{1+a\bar t^{\,N-1},r_1\}=\{1+a\bar t^{\,N-1},1+b\bar t^{\,j}\}
$$ vanishes in $K_2^M(R_N)$, because $N$ is invertible in $S$. Hence the symbol ${\{1+a\bar t^{\,N-1},r_{1},\ldots,r_{n}\}}$ vanishes as well, which finishes the proof.
\end{proof}


We will also need the following simple statement.

\begin{lemma}\label{lem:isomdiff}
For any $n\geqslant 0$, the morphism of $S$-modules $\sigma\colon \Omega^n_{R_2}\to \Omega^n_{R_N}$ restricts to an isomorphism of $S$-modules
$$
\sigma\;:\;\Omega^n_{R_2,(\bar t\,)}\stackrel{\sim}\longrightarrow \Omega^n_{R_N,(\bar t^{N-1})}\,.
$$
\end{lemma}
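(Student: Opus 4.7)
The plan is to give an explicit description of both sides as direct sums indexed by internal degree (the grading by powers of $\bar t$), compute $\sigma$ on each summand, and verify that the resulting maps are isomorphisms of $S$-modules using that $N!$ is invertible.

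First, I would exploit the same Euler-field grading idea used in the proof of Lemma~\ref{lem:trivdR}. Since $\bar t^{\,2}=0$ in $R_2$ and $2$ is invertible, the equality $d(\bar t^{\,2})=2\bar t\, d\bar t=0$ gives $\bar t\, d\bar t=0$ in $\Omega^1_{R_2}$; likewise, since $\bar t^{\,N}=0$ in $R_N$ and $N$ is invertible, we get $\bar t^{\,N-1}d\bar t=0$ in $\Omega^1_{R_N}$. Combined with the splitting $R_*\simeq S\oplus (\bar t\,)$ used as in the proof of Proposition~\ref{prop:trivdR}, these relations yield the explicit decompositions
\begin{equation*}
\Omega^n_{R_2,(\bar t\,)}=\bar t\,\Omega^n_S\;\oplus\;d\bar t\wedge\Omega^{n-1}_S\,,
\end{equation*}
\begin{equation*}
\Omega^n_{R_N,(\bar t^{N-1})}=\bar t^{\,N-1}\Omega^n_S\;\oplus\;\bar t^{\,N-2}d\bar t\wedge\Omega^{n-1}_S\,,
\end{equation*}
where I identify the kernel of the quotient map with the top internal-degree component (in both cases, the ideal $(\bar t^{\,N-1})\subset R_N$ corresponds to that top component of $R_N$, and similarly for $R_2$).

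Next, I would compute the image of $\sigma$ on each summand. By construction, $\sigma(\bar t)=\bar t^{\,N-1}$ and $\sigma(d\bar t)=d(\bar t^{\,N-1})=(N-1)\bar t^{\,N-2}d\bar t$. Extending $S$-linearly and using the wedge, on the first summand $\sigma$ sends $\bar t\,\omega\mapsto \bar t^{\,N-1}\omega$, which is visibly an isomorphism of $S$-modules $\bar t\,\Omega^n_S\xrightarrow{\sim}\bar t^{\,N-1}\Omega^n_S$ (both are canonically isomorphic to $\Omega^n_S$). On the second summand, $\sigma$ sends $d\bar t\wedge\eta\mapsto(N-1)\bar t^{\,N-2}d\bar t\wedge\eta$, which is multiplication by $N-1$ on $\Omega^{n-1}_S$. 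Since $N!$ is invertible in $S$, so is $N-1$, hence this is an isomorphism as well.

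Since the two direct summands on each side are respected by $\sigma$ (the first is annihilated by $d$ modulo the second, and the second is characterized as the image of $d\bar t\wedge$), we conclude that $\sigma\colon \Omega^n_{R_2,(\bar t\,)}\to \Omega^n_{R_N,(\bar t^{N-1})}$ is an isomorphism. The only mildly delicate point is verifying cleanly the two explicit decompositions above; this is precisely the kind of graded computation carried out in Lemma~\ref{lem:trivdR}, so I would either cite that lemma's method or check the decompositions directly from Lemma~\ref{lem:generKahler} together with the relations $\bar t\, d\bar t=0$ and $\bar t^{\,N-1}d\bar t=0$.
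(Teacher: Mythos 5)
Your proof is correct and follows essentially the same route as the paper: the paper likewise obtains the two explicit decompositions $\Omega^n_{R_2,(\bar t\,)}\simeq \Omega^n_S\cdot\bar t\oplus\Omega^{n-1}_S\wedge d\bar t$ and $\Omega^n_{R_N,(\bar t^{N-1})}\simeq\Omega^n_S\cdot\bar t^{\,N-1}\oplus\Omega^{n-1}_S\wedge\bar t^{\,N-2}d\bar t$ from Lemma~\ref{lem:generKahler} together with $d(t^N)=Nt^{N-1}dt$ and invertibility, and then observes that $\sigma$ identifies the two sides. Your explicit computation that $\sigma$ acts diagonally, as the identity on the first summand and as multiplication by the invertible element $N-1$ on the second, merely spells out the paper's concluding ``clearly'' step.
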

\begin{proof}
Let us describe the $S$-module $\Omega^{n}_{R_N}$ explicitly. We have an isomorphism
$$
\Omega^n_{S[t]}\simeq \Omega^{n}_{S}[t]\oplus \Omega^{n-1}_S[t]\wedge dt\,.
$$
Also, there is an equality ${d(t^{N})=Nt^{N-1}dt}$. Combining this with the fact that~$N$ is invertible in~$S$ and with Lemma~\ref{lem:generKahler} applied to the ring $S[t]$ and the ideal $(t^N)\subset S[t]$, we obtain an isomorphism of $S$-modules
$$
\Omega^{n}_{R_N}\simeq(\Omega^{n}_{S}\oplus\Omega^{n}_{S}\cdot \bar t\oplus\ldots\oplus\Omega^{n}_{S}\cdot \bar t^{\,N-1})\oplus (\Omega^{n-1}_S\wedge d\bar t\oplus \Omega^{n-1}_S\wedge \bar t d\bar t\oplus\ldots\oplus \Omega^{n-1}_S\wedge \bar t^{\,N-2}d\bar t\,)\,.
$$
Applying Lemma~\ref{lem:generKahler} to the ring $R_N$ and the ideal $\bar t^{\,N-1}$, we get an isomorphism
\begin{equation}\label{eq:isom1}
\Omega^n_{R_N,(\bar t^{N-1})}\simeq \Omega_S^n\cdot\bar t^{\,N-1}\oplus \Omega^{n-1}_S\wedge \bar t^{\,N-2}d\bar t\,.
\end{equation}
When $N=2$, this gives an isomorphism (cf. Example~\ref{examp:Blochmap2})
\begin{equation}\label{eq:isom2}
\Omega^{n}_{R_2,(\bar t\,)}\simeq \Omega_S^n\cdot \bar t\oplus \Omega^{n-1}_S\wedge d\bar t\,.
\end{equation}
Clearly, $\sigma$ induces an isomorphism between the right hand sides of~\eqref{eq:isom2} and~\eqref{eq:isom1}.
\end{proof}

Now, combining Proposition~\ref{prop:k2}, Lemma~\ref{lem:isomdiff}, and the main result of~\cite{GorchinskiyOsipov2015Miln}, we prove the following auxiliary special case of Theorem~\ref{thm:main}.

\begin{proposition}\label{prop:m=1}
For any $n\geqslant 0$, the Bloch map for ${(\bar t\,)\subset R_N}$ is an isomorphism, that is, we have an isomorphism
$$
\B\;:\; K^{M}_{n+1}\big(R_{N},(\bar t\,)\big)\stackrel{\sim}\longrightarrow\Omega^n_{R_N,(\bar t\,)}/d\,\Omega^{n-1}_{R_N,(\bar t\,)}\,.
$$
\end{proposition}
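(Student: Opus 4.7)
The plan is to proceed by induction on $N$, using the non-split pair $(\bar t^{\,N-1})\subset R_N$ as a stepping stone and then transferring to $(\bar t\,)\subset R_N$ via the reduction Lemma~\ref{lem:embed}. The case $N=1$ is trivial, so I assume $N\geqslant 2$ and that the statement holds for $R_{N-1}$.

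First I would establish that the Bloch map is an isomorphism for the (non-split) pair $(\bar t^{\,N-1})\subset R_N$. For its existence, I would apply the long exact sequence~\eqref{eq:longseqcohom} to the chain of ideals $(\bar t^{\,N-1})\subset (\bar t\,)\subset R_N$ (so $R'=R_{N-1}$ and $I'=(\bar t\,)$). Lemma~\ref{lem:trivdR} yields $H^i_{dR}(R_N,(\bar t\,))=H^i_{dR}(R_{N-1},(\bar t\,))=0$ for all $i$, so the long exact sequence forces $H^n_{dR}(R_N,(\bar t^{\,N-1}))=0$, and Remark~\ref{rmk:Blochmap} produces the Bloch map.

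To prove that this Bloch map is an isomorphism, I would use the commutative square induced by the $S$-algebra homomorphism $\sigma\colon R_2\to R_N$, $\bar t\mapsto \bar t^{\,N-1}$:
$$
\begin{CD}
K^{M}_{n+1}(R_2,(\bar t\,)) @>{\sigma}>> K^{M}_{n+1}(R_N,(\bar t^{\,N-1}))\\
@VV{\B}V  @VV{\B}V\\
\Omega^n_{R_2,(\bar t\,)}/d\,\Omega^{n-1}_{R_2,(\bar t\,)} @>{\sigma}>> \Omega^n_{R_N,(\bar t^{\,N-1})}/d\,\Omega^{n-1}_{R_N,(\bar t^{\,N-1})}
\end{CD}
$$
The top map is surjective by Proposition~\ref{prop:k2}; the bottom map is an isomorphism by Lemma~\ref{lem:isomdiff} together with the fact that $\sigma$ commutes with the de Rham differential; and the left vertical map is an isomorphism by the main result of~\cite{GorchinskiyOsipov2015Miln} applied to the $N=2$ case (compare Example~\ref{examp:Blochmap2}). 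A diagram chase then gives both surjectivity of the right Bloch map (as a composition of a surjection and an isomorphism) and injectivity (lift a kernel element through the surjective top map, then apply injectivity of the left map and of the bottom map).

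Finally, I would apply Lemma~\ref{lem:embed} with $R=R_N$, $I=(\bar t\,)$, and $J=(\bar t^{\,N-1})$, so that $R'=R_{N-1}$ and $I'=(\bar t\,)\subset R_{N-1}$. The three de Rham vanishings required by the lemma all follow from Lemma~\ref{lem:trivdR}; and since $S$ (hence $R_N$) is weakly $5$-fold stable, every element of $R_N$ is a sum of invertibles, so part~(ii) of Lemma~\ref{lem:embed} reduces the isomorphism for $(\bar t\,)\subset R_N$ to the combination of the inductive hypothesis for $(\bar t\,)\subset R_{N-1}$ and the intermediate case $(\bar t^{\,N-1})\subset R_N$ just established. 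The main obstacle is really this intermediate step: Proposition~\ref{prop:k2} only guarantees surjectivity of $\sigma$ on Milnor $K$-groups, never injectivity, so the isomorphism for the non-split pair $(\bar t^{\,N-1})\subset R_N$ cannot be obtained by a direct transport of structure and genuinely needs the combined input of all three auxiliary results.
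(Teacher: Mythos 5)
Your proposal is correct and follows essentially the same route as the paper: induction on $N$, establishing the non-split intermediate case $(\bar t^{\,N-1})\subset R_N$ via the commutative square built from $\sigma$ (using Proposition~\ref{prop:k2}, Lemma~\ref{lem:isomdiff}, and the $N=2$ result of~\cite{GorchinskiyOsipov2015Miln}), and then concluding with Lemma~\ref{lem:embed}(ii) applied to $(\bar t^{\,N-1})\subset(\bar t\,)\subset R_N$. The only cosmetic deviations are your choice of $N=1$ as the induction base and your direct use of the long exact sequence~\eqref{eq:longseqcohom} for existence, which is exactly what Lemma~\ref{lem:embed}(i) encapsulates.
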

\begin{proof}
The proof is by induction on $N$. The base of the induction, namely, the case~$N=2$, is~\cite[Theor.\,2.9]{GorchinskiyOsipov2015Miln}, which holds for $S$, because $2$ is invertible in it and $S$ is weakly $5$-fold stable.

Let us make an induction step from ${N-1}$ to $N$. We will apply Lemma~\ref{lem:embed} to the ideals
$$
(\bar t^{\,N-1})\subset (\bar t\,)\subset R_N\,.
$$
By definition, we have an isomorphism $R_{N-1}\simeq R_N/(\bar t^{\,N-1})$. Since $N!$ is invertible in $S$, by Lemma~\ref{lem:trivdR}, we have the vanishing of relative de Rham cohomology for $(\bar t\,)\subset R_N$ and for~$(\bar t\,)\subset R_{N-1}$. Hence by Lemma~\ref{lem:embed}(i), the Bloch map exists for~${(\bar t^{\,N-1})\subset R_N}$.

For any $n\geqslant 0$, we have the following commutative diagram:
$$
\begin{CD}
K^{M}_{n+1}\big(R_2,(\bar t\,)\big)@>{\B}>> \Omega^n_{R_2,(\bar t\,)}/d\,\Omega^{n-1}_{R_2,(\bar t\,)} \\
@VV\sigma V @VV\sigma V \\
K^{M}_{n+1}\big(R_{N},(\bar t^{\,N-1})\big)@>{\B}>> \Omega^n_{R_N,(\bar t^{N-1})}/d\,\Omega^{n-1}_{R_N,(\bar t^{N-1})}
\end{CD}
$$
The left vertical map is surjective by Proposition~\ref{prop:k2}. The right vertical map is an isomorphism by Lemma~\ref{lem:isomdiff}. The top horizontal map is an isomorphism by~\cite[Theor.\,2.9]{GorchinskiyOsipov2015Miln}. Altogether this implies that the left vertical map and the bottom horizontal map are isomorphisms as well. So, the Bloch map is an isomorphism for $(\bar t^{\,N-1})\subset R_N$ (notice that this is an instance when the Bloch map exists and is an isomorphism in a non-split case).

By the induction hypothesis, the Bloch map is also an isomorphism for~${(\bar t\,)\subset R_{N-1}}$. Since the ring $S$ is weakly $5$-fold stable and, in particular, is weakly $2$-fold stable,~$S$ is generated additively by invertible elements. Hence by Lemma~\ref{lem:embed}(ii), the Bloch map is an isomorphism for ${(\bar t\,)\subset R_N}$.
\end{proof}

Note that in the proof of Proposition~\ref{prop:k2} (respectively, of Lemma~\ref{lem:isomdiff}) we used only that $2N$ (respectively, $N$) is invertible in $S$. However, in the proof of Proposition~\ref{prop:m=1} we do use the invertibility of~$N!$ in~$S$.

\subsection{Proof of the main result}\label{subsec:proofmain}

Now we are ready to prove Theorem~\ref{thm:main}.

\begin{proof}[Proof of Theorem~\ref{thm:main}]
Let $R$, $I$, $N$, and $n$ be as in the theorem. Put~${S=R/I}$. Then~$(R,I)$ is a split nilpotent extension of $S$ of nilpotency degree~$N$ (see Definition~\ref{def:splitnilp}), we have that~$N!$ is invertible in~$S$, and the ring~$S$ is weakly $5$-fold stable. The Bloch map is a morphism from the functor
$$
K_{n+1}^M\;:\;\SNilp_N(S)\longrightarrow\Ab\,,\qquad (R,I)\longmapsto K_{n+1}^M(R,I)
$$
to the functor
$$
\Omega^n/d\,\Omega^{n-1}\;:\;\SNilp_N(S)\longrightarrow\Ab\,,\qquad (R,I)\longmapsto \Omega^n_{R,I}/d\,\Omega^{n-1}_{R,I}\,.
$$
Thus the theorem is equivalent to the fact that this morphism of functors is an isomorphism.

By Proposition~\ref{prop:elaprKOmgega} and Corollary~\ref{cor:nonapprox}(i), respectively, the functors $K_{n+1}^M$ and~$\Omega^n/d\,\Omega^{n-1}$ are finitely freely approximable (see Definition~\ref{def:frapp}). Hence by Lemma~\ref{lem:isomapprox}, it is enough to prove the theorem for $I_{N,m}\subset R_{N,m}$ (see Definition~\ref{def:elem}), where $m\geqslant 1$.

We proceed by induction on $m$. The base of the induction $m=1$ is Proposition~\ref{prop:m=1}. Let us make an induction step from $m-1$ to $m$.

Let $J\subset R_{N,m-1}[t_m]/(t_m^N)$ be the kernel of the surjective homomorphism of algebras over $R_{N,m-1}$
$$
R_{N,m-1}[t_m]/(t_m^N)\longrightarrow R_{N,m}\,,\qquad t_m\longmapsto \bar t_m\,.
$$
Explicitly, we have
$$
J=(\bar t_1,\ldots,\bar t_{m-1})^{N-1}\cdot \bar t_m+(\bar t_1,\ldots,\bar t_{m-1})^{N-2}\cdot \bar t_m^{\,2}+\ldots (\bar t_1,\ldots,\bar t_{m-1})\cdot \bar t^{\,N-1}_m\,.
$$
In particular, $J\subset (\bar t_m)$. Let us apply Lemma~\ref{lem:embed} to the ideals
$$
J\subset (\bar t_m)\subset {R_{N,m-1}[t_m]/(t_m^N)}\,.
$$
By Lemma~\ref{lem:trivdR} with~$S$ replaced by~$R_{N,m-1}$, we have the vanishing of relative de Rham cohomology for ${(\bar t_m)\subset R_{N,m-1}[t_m]/(t_m^N)}$. Furthermore, the preimage of the ideal $J$ in~$R_{N,m-1}[t_m]$ contains $t^N_m$ and is generated by monomials in $t_m$. Hence by Lemma~\ref{lem:trivdR} again, relative de Rham cohomology groups are trivial for ${(\bar t_m)\subset R_{N,m}}$.

Since $N!$ is invertible in $S$ and the ring $S$ is weakly $5$-fold stable, the same holds for the ring $R_{N,m-1}$.
Thus by Proposition~\ref{prop:m=1} with $S$ replaced by~$R_{N,m-1}$, the Bloch map is an isomorphism for ${(\bar t_m)\subset R_{N,m-1}[t_m]/(t_m^N)}$. Hence by Lemma~\ref{lem:embed}(ii), the Bloch map is an isomorphism for ${(\bar t_m)\subset R_{N,m}}$ (and also for $J\subset R_{N,m-1}[t_m]/(t_m^N)$, though we are not using this fact).

Now we apply Lemma~\ref{lem:embed} to the ideals
$$
(\bar t_m)\subset I_{N,m}\subset R_{N,m}\,.
$$
By definition, we have isomorphisms
$$
R_{N,m-1}\simeq R_{N,m}/(\bar t_m)\,,\qquad I_{N,m-1}\simeq I_{N,m}/(\bar t_m)\,.
$$
Proposition~\ref{prop:trivdR} claims the vanishing of relative de Rham cohomology for~${I_{N,m}\subset R_{N,m}}$ and for~${I_{N,m-1}\subset R_{N,m-1}}$. By what was shown above, the Bloch map is an isomorphism for $(\bar t_m)\subset R_{N,m}$. In addition, by the induction hypothesis, the Bloch map is an isomorphism for $I_{N,m-1}\subset R_{N,m-1}$. Hence by Lemma~\ref{lem:embed}(ii), the Bloch map is an isomorphism for~$I_{N,m}\subset R_{N,m}$, which finishes the proof.
\end{proof}

\subsection{Non-existence of the Bloch map in a non-split case}\label{subsec:nonexistBloch}

Finally, using Theorem~\ref{thm:main}, we show that the Bloch map does not exist in general in a non-split case.

\begin{proposition}\label{prop:nonBloch}
Let $S$ be a ring such that $N!$ is invertible in $S$ for a natural number $N\geqslant 1$ and $S$ is weakly $5$-fold stable. Let $(R,I)$ be a split nilpotent extension of $S$ of nilpotency degree~$N$ such that~${H^{n-1}_{dR}(R,I)=0}$ for a natural number $n\geqslant 1$. Let $J\subset R$ be an ideal contained in $I$ such that
$H^{n-1}_{dR}(R',I')\ne 0$, where $R'=R/J$, $I'=I/J$. Then there does not exist a Bloch map from~$K_{n+1}^M(R,J)$ to~${\Omega^n_{R,J}/d\,\Omega^{n-1}_{R,J}}$.
\end{proposition}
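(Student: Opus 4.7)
The plan is to assume that a Bloch map $\B_{R,J}$ exists and derive a contradiction by comparing it with $\B_{R,I}$, which must be an isomorphism in this setting by Theorem~\ref{thm:main}.

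First I would verify that Theorem~\ref{thm:main} applies to the pair $(R,I)$: the number $N!$ is invertible in $R$ because it is invertible in $S=R/I$ and $I$ is nilpotent; the splitting $S\to R$ is part of the data; and $R$ is weakly $5$-fold stable because $S$ is and $I$ is nilpotent (as noted in the paragraph following Definition~\ref{def:weakst}). Consequently, $\B_{R,I}$ is an isomorphism. Since $R$ is in particular weakly $2$-fold stable, $R$ is a sum of invertibles, so Lemma~\ref{lem:surjBlochmap} forces any hypothetical $\B_{R,J}$ to be surjective.

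Second, supposing $\B_{R,J}$ exists, I would set up the commutative square arising from the inclusion $J\subset I$:
$$
\begin{CD}
K_{n+1}^M(R,J) @>i>> K_{n+1}^M(R,I) \\
@VV{\B_{R,J}}V @VV{\B_{R,I}}V \\
\Omega^n_{R,J}/d\,\Omega^{n-1}_{R,J} @>j>> \Omega^n_{R,I}/d\,\Omega^{n-1}_{R,I}
\end{CD}
$$
Commutativity is immediate from the defining formula~\eqref{eq:defBloch} applied to the generators of Lemma~\ref{lem:generMilnorK}. The exact sequence~\eqref{eq:exactMilnor} guarantees that $i$ is injective, whereas the exact sequence~\eqref{eq:longseqforms}, combined with the assumed vanishing $H^{n-1}_{dR}(R,I)=0$, yields an exact fragment
$$
0\longrightarrow H^{n-1}_{dR}(R',I')\longrightarrow \Omega^n_{R,J}/d\,\Omega^{n-1}_{R,J}\overset{j}\longrightarrow \Omega^n_{R,I}/d\,\Omega^{n-1}_{R,I}\,.
$$
Together with $H^{n-1}_{dR}(R',I')\ne 0$ this produces a non-zero element $\alpha\in\Ker(j)$.

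Finally, surjectivity of $\B_{R,J}$ gives some $x\in K_{n+1}^M(R,J)$ with $\B_{R,J}(x)=\alpha$. Commutativity of the square and $j(\alpha)=0$ yield $\B_{R,I}(i(x))=0$; since $\B_{R,I}$ is an isomorphism, $i(x)=0$, and injectivity of $i$ forces $x=0$, whence $\alpha=0$, a contradiction. There is no serious obstacle in this argument: it is a routine diagram chase, whose conceptual content is precisely the asymmetry highlighted in Remark~\ref{rem:nonsplit}, namely that the transition from $I$ to $J$ is exact on the $K$-theory side (giving~\eqref{eq:exactMilnor}) but introduces the potentially non-trivial correction $H^{n-1}_{dR}(R',I')$ on the differential-forms side.
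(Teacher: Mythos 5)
Your proof is correct and follows essentially the same route as the paper: assume $\B_{R,J}$ exists, use Theorem~\ref{thm:main} to make $\B_{R,I}$ an isomorphism, use Lemma~\ref{lem:surjBlochmap} for surjectivity of $\B_{R,J}$, and play the exactness of~\eqref{eq:exactMilnor} against the connecting term $H^{n-1}_{dR}(R',I')$ in~\eqref{eq:longseqforms} to reach a contradiction. The only (harmless) difference is cosmetic: the paper chases the full three-column diagram including $\B_{R',I'}$ and phrases the contradiction as ${\rm Im}(\B_{R,J})\cap{\rm Im}(\alpha)=0$, whereas you chase a single element and never need $\B_{R',I'}$.
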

\begin{proof}
Assume the converse, that is, that the Bloch map ${\B_{R,J}\colon K_{n+1}^M(R,J)\to\Omega^n_{R,J}/d\,\Omega^{n-1}_{R,J}}$ does exist. Both $(R,I)$ and $(R',I')$ satisfy the conditions of Theorem~\ref{the:Bloch} and Theorem~\ref{thm:main}. Therefore the Bloch maps~$\B_{R,I}$ and~$\B_{R',I'}$ exist and are isomorphisms. Formula~\eqref{eq:defBloch} from Definition~\ref{def:Blochmap} implies that the Bloch map is functorial with respect to a ring and an ideal. Hence the exact sequence~\eqref{eq:exactMilnor} from Subsection~\ref{subsec:Milnorbasic} of relative Milnor $K$-groups together with the exact sequence~\eqref{eq:longseqforms} from Subsection~\ref{subsec:basicdiffforms} give a commutative diagram
$$
\begin{CD}
0@>>> K^M_{n+1}(R,J)@>>>  K^M_{n+1}(R,I)@>>>  K^M_{n+1}(R',I')@>>>0 \\
@.  @VV{\B}_{R,J}V @VV{\B}_{R,I}V @VV{\B}_{R',I'}V\\
H^{n-1}_{dR}(R',I')@>\alpha>>\Omega^n_{R,J}/d\,\Omega^{n-1}_{R,J} @>>> \Omega^n_{R,I}/d\,\Omega^{n-1}_{R,I} @>>> \Omega^n_{R',I'}/d\,\Omega^{n-1}_{R',I'}@>>>0
\end{CD}
$$
with exact rows and with $\B_{R,I}$ and $\B_{R',I'}$ being isomorphisms. This implies the vanishing
\begin{equation}\label{eq:zero}
{\rm Im}(\B_{R,J})\cap {\rm Im}(\alpha)=0
\end{equation}
of the intersection of subgroups in $\Omega^n_{R,J}/d\,\Omega^{n-1}_{R,J}$.

On the other hand, the map~$\alpha$ is injective, because of the condition $H^{n-1}_{dR}(R,I)=0$ and the exact sequence~\eqref{eq:longseqforms} extended to the left. Combining this with the condition $H^{n-1}_{dR}(R',I')\ne 0$, we obtain that ${\rm Im}(\alpha)\ne 0$. At the same time, by Lemma~\ref{lem:surjBlochmap}, the map $\B_{R,J}$ is surjective. Thus we obtain a contradiction with formula~\eqref{eq:zero}.
\end{proof}

There are many examples that satisfy the conditions of Proposition~\ref{prop:nonBloch} (see the discussion at the end of Subsection~\ref{subsec:reldR}). For instance, it follows from Proposition~\ref{prop:trivdR} and~\cite[\S\,1.3]{GrauertKerner} that one can take $S=\Q$, $N=6$, $n=1$,
$$
R=\Q[t_1,t_2]/(t_1,t_2)^6\,,\qquad I=(t_1,t_2)\,,\qquad J=(\partial_{t_1}f,\partial_{t_2}f)/(t_1,t_2)^6\,,
$$
where ${f=t_1^4+t_1^2 t_2^3+t_2^5\in \Q[t_1,t_2]}$.

\bibliographystyle{alpha}
\bibliography{RelMil}

\begin{thebibliography}{GO16b}

\bibitem[AK11]{ArapuraKang}
D.~Arapura and S.-J. Kang.
\newblock K\"ahler--de {R}ham cohomology and {C}hern classes.
\newblock {\em Comm. Algebra}, 39(4):1153--1167, 2011.

\bibitem[Blo73]{Bloch73}
S.~Bloch.
\newblock On the tangent space to {Q}uillen {$K$}-theory.
\newblock pages 205--210. Lecture Notes in Math., Vol. 341, 1973.

\bibitem[Blo75]{Bloch75}
S.~Bloch.
\newblock {$K\sb{2}$} of {A}rtinian {$Q$}-algebras, with application to
  algebraic cycles.
\newblock {\em Comm. Algebra}, 3:405--428, 1975.

\bibitem[Dri11]{Dribus}
B.~Dribus.
\newblock A {G}oodwillie-type theorem for {M}ilnor {$K$}-theory.
\newblock {\em preprint arXiv:1402.2222}, 2011.

\bibitem[GK64]{GrauertKerner}
H.~Grauert and H.~Kerner.
\newblock Deformationen von {S}ingularit\"aten komplexer {R}\"aume.
\newblock {\em Math. Ann.}, 153:236--260, 1964.

\bibitem[GO15a]{GorchinskiyOsipov15UMN}
S.~O. Gorchinskiy and D.~V. Osipov.
\newblock Explicit formula for the higher-dimensional {C}ontou-{C}arr\`ere
  symbol.
\newblock {\em Russian Math. Surveys}, 70(1):171--173, 2015.

\bibitem[GO15b]{GorchinskiyOsipov15MS}
S.~O. Gorchinskiy and D.~V. Osipov.
\newblock A higher-dimensional {C}ontou-{C}arr\`ere symbol: local theory.
\newblock {\em Sbornik: Mathematics}, 206(9):1191--1259, 2015.

\bibitem[GO15c]{GorchinskiyOsipov2015Miln}
S.~O. Gorchinskiy and D.~V. Osipov.
\newblock Tangent space to {M}ilnor {$K$}-groups of rings.
\newblock {\em Proc. Steklov Inst. Math.}, 290(1):26--34, 2015.

\bibitem[GO16a]{GorchinskiyOsipov16Tr}
S.~O. Gorchinskiy and D.~V. Osipov.
\newblock Continuous homomorphisms between algebras of iterated {L}aurent
  series over a ring.
\newblock {\em Proc. Steklov Inst. Math.}, 294(1):47--66, 2016.

\bibitem[GO16b]{GorchinskiyOsipov16FFA}
S.~O. Gorchinskiy and D.~V. Osipov.
\newblock Higher-dimensional {C}ontou-{C}arr\`ere symbol and continuous
  automorphisms.
\newblock {\em Funct. Anal. Its Appl.}, 50(4):266--280, 2016.

\bibitem[Gon95]{Goncharov}
A.~B. Goncharov.
\newblock Geometry of configurations, polylogarithms, and motivic cohomology.
\newblock {\em Adv. Math.}, 114(2):197--318, 1995.

\bibitem[Goo86]{Goodwillie}
T.~G. Goodwillie.
\newblock Relative algebraic {$K$}-theory and cyclic homology.
\newblock {\em Ann. of Math. (2)}, 124(2):347--402, 1986.

\bibitem[Gre75]{Greuel}
G.-M. Greuel.
\newblock Der {G}auss--{M}anin--{Z}usammenhang isolierter {S}ingularit\"aten
  von vollst\"andigen {D}urchschnitten.
\newblock {\em Math. Ann.}, 214:235--266, 1975.

\bibitem[Ker09]{Kerz}
M.~Kerz.
\newblock The {G}ersten conjecture for {M}ilnor {$K$}-theory.
\newblock {\em Invent. Math.}, 175(1):1--33, 2009.

\bibitem[Mal74]{Malgrange}
B.~Malgrange.
\newblock Int\'egrales asymptotiques et monodromie.
\newblock {\em Ann. Sci. \'Ecole Norm. Sup. (4)}, 7:405--430 (1975), 1974.

\bibitem[Mat89]{Matsumura}
H.~Matsumura.
\newblock {\em Commutative ring theory}, volume~8 of {\em Cambridge Studies in
  Advanced Mathematics}.
\newblock Cambridge University Press, Cambridge, second edition, 1989.
\newblock Translated from the Japanese by M. Reid.

\bibitem[Mil68]{Milnor68}
J.~Milnor.
\newblock {\em Singular points of complex hypersurfaces}.
\newblock Annals of Mathematics Studies, No. 61. Princeton University Press,
  Princeton, N.J.; University of Tokyo Press, Tokyo, 1968.

\bibitem[Mon10]{Mond}
D.~Mond.
\newblock From the index of a differential operator to the {M}ilnor number of a
  singularity.
\newblock In {\em Differential algebra, complex analysis and orthogonal
  polynomials}, volume 509 of {\em Contemp. Math.}, pages 129--141. Amer. Math.
  Soc., Providence, RI, 2010.

\bibitem[Mor14]{Morrow}
M.~Morrow.
\newblock {$K_2$} of localisations of local rings.
\newblock {\em J. Algebra}, 399:190--204, 2014.

\bibitem[MS77]{MaazenStienstra}
H.~Maazen and J.~Stienstra.
\newblock A presentation for {$K\sb{2}$} of split radical pairs.
\newblock {\em J. Pure Appl. Algebra}, 10(3):271--294, 1977.

\bibitem[NS90]{NesterenkoSuslin}
Yu.~P. Nesterenko and A.~A. Suslin.
\newblock Homology of the general linear group over a local ring, and
  {M}ilnor's {$K$}-theory.
\newblock {\em Math. USSR-Izv.}, 34(1):121--145, 1990.

\bibitem[Pal67]{Palamodov}
V.~P. Palamodov.
\newblock Multiplicity of holomorphic mappings.
\newblock {\em Funct. Anal. Its Appl.}, 1(3):218--226, 1967.

\bibitem[Rei67]{Reiffen67}
H.-J. Reiffen.
\newblock Das {L}emma von {P}oincar\'e f\"ur holomorphe {D}ifferential-formen
  auf komplexen {R}\"aumen.
\newblock {\em Math. Z.}, 101:269--284, 1967.

\bibitem[RT15]{RossiTerracini}
M.~Rossi and L.~Terracini.
\newblock Maple subroutines for computing {M}ilnor and {T}yurina numbers of
  hypersurface singularities with application to {A}rnol'd adjacencies.
\newblock {\em Rend. Semin. Mat. Univ. Politec. Torino}, 73(3-4):269--316,
  2015.

\bibitem[Sai71]{Saito}
K.~Saito.
\newblock Quasihomogene isolierte {S}ingularit\"aten von {H}yperfl\"achen.
\newblock {\em Invent. Math.}, 14:123--142, 1971.

\bibitem[Tju69]{Tjurina}
G.~N. Tjurina.
\newblock Locally semi-universal flat deformations of isolated singularities of
  complex spaces.
\newblock {\em Izv. Akad. Nauk SSSR Ser. Mat.}, 33(5):967--999, 1969.

\bibitem[Tr{\'{a}}74]{Trang}
L\^{e} D\~{u}ng Tr{\'{a}}ng.
\newblock Calculation of {M}ilnor number of isolated singularity of complete
  intersection.
\newblock {\em Funct. Anal. Its Appl.}, 8(2):127--131, 1974.

\bibitem[vdK71]{vdKallen71}
W.~van~der Kallen.
\newblock Le {$K\sb{2}$} des nombres duaux.
\newblock {\em C. R. Acad. Sci. Paris S\'er. A-B}, 273:A1204--A1207, 1971.

\bibitem[vdK77]{vdKallen}
W.~van~der Kallen.
\newblock The {$K\sb{2}$} of rings with many units.
\newblock {\em Ann. Sci. \'Ecole Norm. Sup. (4)}, 10(4):473--515, 1977.

\end{thebibliography}

\end{document}